\documentclass[12pt,reqno]{article}
\usepackage{amssymb,amscd,amsmath,amsthm,color}
\textwidth=15.5cm
\textheight=22.5cm
\parindent=15pt

\hoffset=-11mm
\voffset=-14mm

\newtheorem{thm}{Theorem}[section]

\newtheorem{lemma}[thm]{Lemma}
\newtheorem{cor}[thm]{Corollary}

\newtheorem{remark}[thm]{Remark}
\newtheorem{example}[thm]{Example}

\numberwithin{equation}{section}

\def\bM{\mathbb{M}}
\def\bP{\mathbb{P}}
\def\bN{\mathbb{N}}
\def\bR{\mathbb{R}}
\def\bC{\mathbb{C}}
\def\Tr{\mathrm{Tr}\,}

\def\id{\mathrm{id}}
\def\ffi{\varphi}

\def\eps{\varepsilon}
\def\diag{\mathrm{diag}}
\def\cF{\mathcal{F}}
\def\convex{\mathrm{convex}}
\def\concave{\mathrm{concave}}

\begin{document}
\baselineskip=16pt
\allowdisplaybreaks

\centerline{\LARGE Concavity of certain matrix trace and norm functions.\ II}
\bigskip
\bigskip
\centerline{\Large
Fumio Hiai\footnote{Supported in part by Grant-in-Aid for Scientific Research (C)21540208. \\
\quad\ \ {\it E-mail address:} hiai.fumio@gmail.com}}

\medskip
\begin{center}
$^1$\,Tohoku University (Emeritus), \\
Hakusan 3-8-16-303, Abiko 270-1154, Japan
\end{center}

\medskip
\begin{abstract}
We refine Epstein's method to prove joint concavity/convexity of matrix trace functions
of Lieb type $\Tr f(\Phi(A^p)^{1/2}\Psi(B^q)\Phi(A^p)^{1/2})$ and symmetric (anti-) norm
functions of the form $\|f(\Phi(A^p)\,\sigma\,\Psi(B^q))\|$, where $\Phi$ and $\Psi$ are
positive linear maps, $\sigma$ is an operator mean, and $f(x^\gamma)$ with a certain power
$\gamma$ is an operator monotone function on $(0,\infty)$. Moreover, the variational method of
Carlen, Frank and Lieb is extended to general non-decreasing convex/concave functions on
$(0,\infty)$ so that we prove joint concavity/convexity of more trace functions of Lieb type.

\bigskip\noindent
{\it 2010 Mathematics Subject Classification:}
Primary 15A60, 47A30, 47A60

\bigskip\noindent
{\it Key words and phrases:}
Matrices, Trace, Symmetric norms, Symmetric anti-norms, Joint concavity, Joint convexity,
Operator monotone function, Operator mean
\end{abstract}

\section{Introduction}

In the present paper we consider two-variable matrix functions
\begin{align}
F(A,B)&=f(\Phi(A^p)^{1/2}\Psi(B^q)\Phi(A^p)^{1/2}), \label{F-1.1}\\
F(A,B)&=f(\Phi(A^p)\,\sigma\,\Psi(B^q)), \label{F-1.2}
\end{align}
where $A,B$ are positive definite matrices, $p,q$ are real parameters, $\Phi,\Psi$ are
(strictly) positive linear maps, $\sigma$ is an operator mean, and $f$ is a real function on
$(0,\infty)$. The problem of our concern is joint concavity/convexity of trace and norm
functions of such $F(A,B)$ as above. The problem originated with seminal papers of Lieb
\cite{Li} and Epstein \cite{Ep} in 1973. In \cite{Li}, motivated by a conjecture on
Wigner-Yanase-Dyson skew information, Lieb established the so-called Lieb concavity/convexity
for the matrix trace function $(A,B)\mapsto\Tr X^*A^pXB^q$, that is a special case of
\eqref{F-1.1} when $\Phi=X^*\cdot X$, $\Psi=\id$ and $f(x)=x$. An equivalent reformulation
is Ando's matrix concavity/convexity of $(A,B)\mapsto A^p\otimes B^q$ in \cite{An}. On the
other hand, in \cite{Ep} Epstein developed a complex function method using theory of Pick
functions, called Epstein's method, to prove concavity of the trace function
$A\mapsto\Tr(X^*A^pX)^{1/p}$.

In these years, big progress in the subject matter has been made by several authors. For
instance, in \cite{CL1,CL2} Carlen and Lieb extensively developed concavity/convexity of the
trace functions of the forms $\Tr(X^*A^pX)^s$ and $\Tr(A^p+B^p)^s$ of Minkowski type. Very
recently, in \cite{CFL} they with Frank made the best use of the variational formulas
discovered in \cite{CL2} to obtain concavity/convexity of the trace functions
\begin{equation}\label{F-1.3}
(A,B)\mapsto\Tr(A^{p/2}B^qA^{p/2})^s,
\end{equation}
a special case of the trace functions of \eqref{F-1.1} with $f(x)=x^s$. In our previous papers
\cite{Hi1,Hi3} we refined Epstein's complex function method to prove joint concavity/convexity
results for the trace functions of \eqref{F-1.1} and for the norm/trace functions of
\eqref{F-1.2} in the case $f(x)=x^s$. For additional relevant results see \cite{CFL,CL2,Hi3} and
references therein. Moreover, it is worth noting that our problem on concavity/convexity of
\eqref{F-1.3} also emerges from recent developments of new R\'enyi relative entropies relevant
to quantum information theory. That is closely related to monotonicity of those relative
entropies under quantum channels (i.e., completely positive and trace-preserving maps), as
mentioned in the last part of \cite{CFL} (see also \cite{AD} and references therein).

The present paper is a continuation of \cite{Hi1,Hi3}. In Sections 2 and 3 we further refine
Epstein's method used in \cite{Hi1,Hi3} and prove concavity/convexity theorems for the trace
functions of \eqref{F-1.1} and for the symmetric (anti-) norm functions of \eqref{F-1.2} when
$f(x^\gamma)$ with a certain power $\gamma$ is an operator monotone function on $(0,\infty)$.
In Section 4 we present a general method to passage from concavity/convexity of symmetric
(anti-) norm functions to that of trace functions, and apply it to obtain some general
concavity/convexity result for the trace functions of \eqref{F-1.2}. In Section 5 we extend
the variational method in \cite{CL2,CFL} to general non-decreasing convex/concave functions on
$(0,\infty)$, which enables us to obtain more concavity/convexity theorems for the trace
functions of \eqref{F-1.1}. To do this, we provide, in the appendix, some variational formulas
for such functions on $(0,\infty)$, which might be of independent interest as a theory of
conjugate functions (or the Legendre transform) on $(0,\infty)$.

\section{Trace functions of Lieb type with operator monotone functions}

For each $n\in\bN$ the $n\times n$ complex matrix algebra is denoted by $\bM_n$. We write
$\bM_n^+:=\{A\in\bM_n:A\ge0\}$, the $n\times n$ positive semidefinite matrices, and
$\bP_n:=\{A\in\bM_n:A>0\}$, the $n\times n$ positive definite matrices. The usual trace on
$\bM_n$ is denoted by $\Tr$. A linear map $\Phi:\bM_n\to\bM_l$ is positive if $A\in\bM_n^+$
implies $\Phi(A)\in\bM_l^+$, and it is strictly positive if $A\in\bP_n$ implies
$\Phi(A)\in\bP_l$. A positive linear map $\Phi:\bM_n\to\bM_l$ is strictly positive if and only
if $\Phi(I_n)\in\bP_l$, where $I_n$ (or simply $I$) is the identity of $\bM_n$.

A real function $h$ on $(0,\infty)$ is said to be {\it operator monotone} (resp., {\it operator
monotone decreasing}) if $A\le B$ implies $h(A)\le h(B)$ (resp., $h(A)\ge h(B)$) for
$A,B\in\bP_n$ of any $n\in\bN$. Obviously, $h$ is operator monotone decreasing if and only
if $-h$ is operator monotone.

Let $n,m,l\in\bN$ and $p,q\in\bR$. Assume that $(p,q)\ne(0,0)$; otherwise our problem is trivial.
Let $f$ be a real function on $(0,\infty)$. Throughout the paper, unless otherwise stated, we
assume that $\Phi:\bM_n\to\bM_l$ and $\Psi:\bM_m\to\bM_l$ are strictly positive linear maps. The
aim of this section is to prove the next theorem concerning joint concavity/convexity of the
trace function of Lieb type
\begin{equation}\label{F-2.1}
(A,B)\in\bP_n\times\bP_m\longmapsto\Tr f(\Phi(A^p)^{1/2}\Psi(B^q)\Phi(A^p)^{1/2}).
\end{equation}
The result was announced in the concluding remarks of \cite{Hi3}. Our strategy for the proof
is to improve so-called Epstein's method \cite{Ep} that was also used in our previous papers
\cite{Hi1,Hi3}.

\begin{thm}\label{T-2.1}
Assume that either $0\le p,q\le1$ or $-1\le p,q\le0$ (hence $p+q>0$ or $<0$ from the assumption
$(p,q)\ne(0,0)$). Let $f$ be a real function on $(0,\infty)$. If $f(x^{p+q})$ is operator
monotone (resp., operator monotone decreasing) on $(0,\infty)$, then \eqref{F-2.1} is jointly
concave (resp., jointly convex).
\end{thm}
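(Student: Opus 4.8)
The plan is to use Epstein's complex-analytic method, as refined in the author's earlier papers \cite{Hi1,Hi3}.The plan is to run Epstein's complex-function method, as refined in \cite{Hi1,Hi3}, after first peeling off the outer function $f$ by means of the hypothesis that $g(x):=f(x^{p+q})$ is operator monotone. I treat the concave case ($0\le p,q\le1$); the convex case ($-1\le p,q\le0$) is entirely parallel, with $g$ operator monotone decreasing. Since \eqref{F-2.1} is continuous and $\bP_n\times\bP_m$ is convex, it suffices to prove midpoint concavity along an arbitrary segment; fixing $A_0,A_1\in\bP_n$ and $B_0,B_1\in\bP_m$ I set $A(t)=A_0+tH$, $B(t)=B_0+tK$ with $H=A_1-A_0$, $K=B_1-B_0$ Hermitian, and aim to show $t\mapsto\Tr f(\Phi(A(t)^p)^{1/2}\Psi(B(t)^q)\Phi(A(t)^p)^{1/2})$ is concave where $A(t),B(t)>0$. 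Writing $T(A,B):=\Phi(A^p)^{1/2}\Psi(B^q)\Phi(A^p)^{1/2}\ge0$ and $S:=T^{1/(p+q)}$, the target becomes concavity of $\Tr g(S)$.

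First I would reduce to two building-block functionals. Being operator monotone on $(0,\infty)$, $g$ admits an integral representation $g(w)=a+bw+\int_{(0,\infty)}\frac{w}{w+\lambda}\,d\mu(\lambda)$ with $b\ge0$ and $\mu\ge0$; hence, using $\frac{w}{w+\lambda}=1-\lambda(w+\lambda)^{-1}$,
\[
\Tr g(S)=a\,l+b\,\Tr S+\int_{(0,\infty)}\bigl(l-\lambda\,\Tr(S+\lambda)^{-1}\bigr)\,d\mu(\lambda).
\]
Since $b\ge0$ and $\mu\ge0$, concavity of $\Tr g(S)$ follows once I know that $(A,B)\mapsto\Tr S=\Tr T^{1/(p+q)}$ is jointly concave and that $(A,B)\mapsto\Tr(S+\lambda)^{-1}$ is jointly convex for each $\lambda>0$. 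The first is essentially the boundary case $s=1/(p+q)$ of the power concavity established in \cite{Hi1,Hi3}, so only the resolvent functional is genuinely new, and it is of the same type handled by Epstein's method.

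For the building blocks I would complexify, extending $t$ to $z$ in the upper half-plane and setting $A(z)=A_0+zH$, $B(z)=B_0+zK$. A short argument shows that for $\Im z>0$ the matrices $A(z),B(z)$ have no eigenvalue in $(-\infty,0]$, so that $A(z)^p,B(z)^q$ are given by the principal branch and depend holomorphically on $z$. The idea is then to track sectorial behaviour: $X\mapsto X^p$ should contract numerical-range sectors by the factor $p$, these constraints being preserved by the positive maps $\Phi,\Psi$, so that $\Phi(A(z)^p)\Psi(B(z)^q)$ sits in a sector whose half-angle is governed by $p+q$ and the $1/(p+q)$-th power renormalises it toward the closed upper half-plane. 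Feeding this through the Pick function $g$ and taking the trace, the aim is to show that $z\mapsto\psi(z):=\Tr g(S(A(z),B(z)))$ is holomorphic on the upper half-plane, real on the relevant real segment, with $\Im\psi$ of one sign; the Nevanlinna representation of such a function—whose representing measure I expect to pin to the correct side of the segment using the degree-one homogeneity of $S$ and the behaviour as $A(t)$ or $B(t)$ degenerates—would then yield concavity on the segment.

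The hard part is precisely this sectorial bookkeeping for the \emph{non-normal} continued operator $\Phi(A(z)^p)^{1/2}\Psi(B(z)^q)\Phi(A(z)^p)^{1/2}$ when the directions $H,K$ are indefinite, where the numerical range of $A(z)$ straddles the real axis: one must verify that $\Im\psi$ keeps a constant sign throughout the upper half-plane, and matching the exponent $p+q$ to the total sector half-angle is exactly the point at which operator monotonicity of $f(x^{p+q})$ is consumed. Equally delicate is extracting \emph{concavity} rather than mere operator monotonicity from the continuation, i.e.\ confirming that the representing measure avoids the open segment and contributes with the right sign. This is the refinement of Epstein's method beyond its use in \cite{Hi1,Hi3}, and is where I expect the main technical effort to concentrate.
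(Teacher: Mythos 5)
Your overall architecture coincides with the paper's: reduce convexity to concavity by replacing $f$ with $-f$, write $g(x)=f(x^{p+q})$ via the integral representation of operator monotone functions, and thereby reduce everything to joint concavity of $\Tr T^{1/(p+q)}$ and of $\Tr\bigl(S(S+\lambda)^{-1}\bigr)$ (equivalently joint convexity of $\Tr(S+\lambda)^{-1}$), with the resolvent term handled by Epstein's method. That much is right. The genuine gap is that the one step carrying all the content --- the Epstein argument for the resolvent building block --- is not carried out, and the way you set it up would not work. You complexify as $A(z)=A_0+zH$ with $H=A_1-A_0$ indefinite; then $\Im A(z)=(\Im z)H$ is indefinite, the numerical range of $A(z)$ is confined to nothing better than $\bC\setminus(-\infty,0]$, and the sector bookkeeping you correctly identify as ``the hard part'' genuinely fails: $A(z)^p$ does not land in a sector of half-angle $p\pi$, so you cannot conclude that $S(z)^{-1}$ or the resolvent has spectrum in a fixed half-plane. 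The paper's (and Epstein's) resolution is to complexify the \emph{other} way, $X(z)=zA+H$, so that $\Im X(z)=(\Im z)A>0$ forces the numerical range of $X(z)$ into $\bC^+$, whence $X(z)^p$ sits in the sector $\{re^{i\theta}:0<\theta<p\pi\}$, this is preserved by the positive maps, and $F(z)=\Phi(X(z)^p)^{1/2}\Psi(Y(z)^q)\Phi(X(z)^p)^{1/2}$ has spectrum in the sector of angle $(p+q)\pi$. One then shows $\ffi(z)=\Tr(z^{-1}I+F(z)^{-1/(p+q)})^{-1}$ is a Pick function on $\bC\setminus(-\infty,R]$ and uses degree-one homogeneity of $F$ in $z$ to identify $x\ffi(x^{-1})$ with the target trace at the matrices $A+xH$, $B+xK$; concavity of $x\ffi(x^{-1})$ on $(0,R^{-1})$ then follows termwise from the Nevanlinna representation. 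Your ``pin the representing measure to the correct side'' is the right instinct, but without the $zA+H$ parametrization and the $x\mapsto x^{-1}$ substitution there is no Pick function to represent.

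Two smaller points. First, you misidentify the case split: $-1\le p,q\le0$ is \emph{not} ``the convex case'' --- concavity versus convexity is governed by whether $f(x^{p+q})$ is operator monotone or operator monotone decreasing, while the sign of $p,q$ is an independent dichotomy inside the concavity proof. The negative-exponent case requires an extra device, namely rewriting the expression with $\hat\Phi(A):=\Phi(A^{-1})^{-1}$ and $\hat\Psi(B):=\Psi(B^{-1})^{-1}$, which your proposal does not address. Second, in your third paragraph you propose to feed the full function $g$ through the analytic continuation and study $\Tr g(S(A(z),B(z)))$; this conflicts with your own reduction and is harder than necessary --- the complex-analytic step should be applied only to the single resolvent functional, for which the spectral localization can be verified explicitly.
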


When $f(x)=x^s$ with $s\in\bR$, we have an important special case
\begin{equation}\label{F-2.4}
(A,B)\in\bP_n\times\bP_m\longmapsto
\Tr\bigl\{\Phi(A^p)^{1/2}\Psi(B^q)\Phi(A^p)^{1/2}\bigr\}^s.
\end{equation}
The most familiar case where $\Phi=\Psi=\id$ is
\begin{equation}\label{F-2.5}
(A,B)\in\bP_n\times\bP_n\longmapsto\Tr(A^{p/2}B^qA^{p/2})^s.
\end{equation}
Theorem \ref{T-2.1} improves \cite[Theorem 2.1]{Hi3} as follows: If either $0\le p,q\le1$ and
$0\le s\le1/(p+q)$, or $-1\le p,q\le0$ and $1/(p+q)\le s\le0$, then \eqref{F-2.4} is jointly
concave. If either $0\le p,q\le1$ and $-1/(p+q)\le s\le0$, or $-1\le p,q\le0$ and
$0\le s\le-1/(p+q)$, then \eqref{F-2.4} is jointly convex. The concavity assertion, together
with \cite[Proposition 5.1\,(2)]{Hi3}, says that \eqref{F-2.5} is jointly concave if and only if
either $0\le p,q\le1$ and $0\le s\le1/(p+q)$, or $-1\le p,q\le0$ and $1/(p+q)\le s\le0$. This
characterization result was recently established in \cite{CFL} as well. On the other hand, the
convexity assertion was extended to a wide variety of $(p,q,s)$ in \cite{CFL} (and also in
Section 5 of this paper).

A corollary of Theorem \ref{T-2.1} is

\begin{cor}\label{C-2.3}
Assume that $\Phi:\bM_n\to\bM_l$ and $\Psi:\bM_m\to\bM_l$ are unital positive linear maps. Let
$0\le\alpha\le1$. If $f$ is operator monotone (resp., operator monotone decreasing) on
$(0,\infty)$, then
$$
(A,B)\in\bP_n\times\bP_m\longmapsto
\Tr f(\exp\{\alpha\Phi(\log A)+(1-\alpha)\Psi(\log B)\})
$$
is jointly concave (resp., jointly convex).
\end{cor}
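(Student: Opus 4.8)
The plan is to deduce Corollary \ref{C-2.3} from Theorem \ref{T-2.1} by a scaling of the exponents together with a Lie--Trotter type limit as $p,q$ tend to $0$. First observe that since $\Phi$ and $\Psi$ are unital, they are automatically strictly positive (indeed $\Phi(I_n)=I_l\in\bP_l$ and similarly for $\Psi$), so the standing hypothesis of Theorem \ref{T-2.1} is in force.

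Fix an operator monotone $f$ and $0\le\alpha\le1$. For each $0<r\le1$ put $p:=r\alpha$ and $q:=r(1-\alpha)$, so that $0\le p,q\le1$ and $p+q=r>0$. Define $f_r(y):=f(y^{1/r})$ on $(0,\infty)$; then $f_r(x^{p+q})=f_r(x^r)=f(x)$ is operator monotone, so Theorem \ref{T-2.1} applies to $f_r$ with these exponents and shows that
$$(A,B)\longmapsto \Tr f_r\bigl(\Phi(A^p)^{1/2}\Psi(B^q)\Phi(A^p)^{1/2}\bigr)$$
is jointly concave. Writing $M_r:=\Phi(A^p)^{1/2}\Psi(B^q)\Phi(A^p)^{1/2}\in\bP_l$ and using the functional-calculus identity $f_r(M_r)=f(M_r^{1/r})$, this says that $G_r(A,B):=\Tr f(M_r^{1/r})$ is jointly concave on $\bP_n\times\bP_m$ for every $0<r\le1$.

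It remains to pass to the limit $r\to0^+$, which is the main technical point. The key claim is
$$\lim_{r\to0^+}M_r^{1/r}=\exp\{\alpha\Phi(\log A)+(1-\alpha)\Psi(\log B)\}.$$
To establish it, expand $A^p=\exp(r\alpha\log A)=I+r\alpha\log A+O(r^2)$ and apply the unital, bounded linear map $\Phi$ to obtain $\Phi(A^p)=I+r\alpha\Phi(\log A)+O(r^2)$, and similarly $\Psi(B^q)=I+r(1-\alpha)\Psi(\log B)+O(r^2)$; taking square roots and multiplying the three factors yields $M_r=I+rZ+O(r^2)$ with $Z:=\alpha\Phi(\log A)+(1-\alpha)\Psi(\log B)$. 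Hence $\tfrac1r\log M_r=Z+O(r)$, so $M_r^{1/r}=\exp(\tfrac1r\log M_r)\to\exp(Z)$ in norm. Since $f$ is continuous on $(0,\infty)$ and $\exp(Z)\in\bP_l$, continuity of the functional calculus and of $\Tr$ gives $G_r(A,B)\to\Tr f(\exp\{\alpha\Phi(\log A)+(1-\alpha)\Psi(\log B)\})$ pointwise on $\bP_n\times\bP_m$. A pointwise limit of jointly concave functions is jointly concave, which proves the concavity assertion. The operator monotone decreasing case is entirely parallel: there $f_r(x^{p+q})=f(x)$ is operator monotone decreasing, so Theorem \ref{T-2.1} gives joint convexity of each $G_r$, and the same limit yields joint convexity of the limit function.
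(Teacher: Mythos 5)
Your proposal is correct and follows essentially the same route as the paper: apply Theorem \ref{T-2.1} with $p=\alpha r$, $q=(1-\alpha)r$ to the function $y\mapsto f(y^{1/r})$ (operator monotone since $f(x^{(p+q)/r})=f(x)$), then let $r\searrow0$ via a Lie--Trotter type limit and use that a pointwise limit of jointly concave functions is jointly concave. The only cosmetic difference is that you carry out the Trotter-type expansion $M_r=I+rZ+O(r^2)$ by hand, whereas the paper quotes the analogous computation from the proof of \cite[Lemma 3.3]{HP}.
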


\begin{proof}
We may assume that $0<\alpha<1$. It is easy to see that for every $A\in\bP_n$ and $B\in\bP_m$,
$$
\lim_{r\searrow0}\Phi(A^{\alpha r})^{1/r}=\exp(\alpha\Phi(\log A)),\qquad
\lim_{r\searrow0}\Psi(B^{(1-\alpha)r})^{1/r}=\exp((1-\alpha)\Psi(\log B)),
$$
from which it is also easy to verify (see the proof of \cite[Lemma 3.3]{HP} for instance) that
\begin{align*}
&\bigl\{\Phi(A^{\alpha r})^{1/2}\Psi(B^{(1-\alpha)r})
\Phi(A^{\alpha r})^{1/2}\bigr\}^{1/r} \\
&\qquad=\bigl\{\bigl(\Phi(A^{\alpha r})^{1/r}\bigr)^{r/2}
\bigl(\Psi(B^{(1-\alpha)r})^{1/r}\bigr)^r
\bigl(\Phi(A^{\alpha r})^{1/r}\bigr)^{r/2}\bigr\}^{1/r} \\
&\qquad\longrightarrow\exp\{\alpha\Phi(\log A)+(1-\alpha)\Psi(\log B)\}
\end{align*}
as $r\searrow0$. Hence the corollary follows by taking the limit of the concavity/convexity
assertions of Theorem \ref{T-2.1} applied to  $p=\alpha r$ and $q=(1-\alpha)r$, since
$f(x^{(p+q)/r})=f(x)$.
\end{proof}

\noindent
{\it Proof of Theorem \ref{T-2.1}.}\enspace
First, the convexity assertion follows by applying the concavity one to $-f$. So what we need to
prove is that if $h$ is an operator monotone function on $(0,\infty)$ and if either
$0\le p,q\le1$ or $-1\le p,q\le0$, then
\begin{equation}\label{F-2.2}
(A,B)\in\bP_n\times\bP_m\longmapsto
\Tr h\bigl(\{\Phi(A^p)^{1/2}\Psi(B^q)\Phi(A^p)^{1/2}\}^{1/(p+q)}\bigr)
\end{equation}
is jointly concave. Recall \cite[Theorem 1.9]{FHR} that an operator
monotone function $h$ on $(0,\infty)$ admits an integral expression
$$
h(x)=h(1)+bx+\int_{[0,\infty)}{(x-1)(1+\lambda)\over x+\lambda}\,d\mu(\lambda),
$$
where $b\ge0$ and $\mu$ is a finite positive measure on $[0,\infty)$. To prove the assertion,
it suffices to show that \eqref{F-2.2} is jointly concave when $h(x)=\mathrm{const.}$, $h(x)=x$,
and $h(x)=x/(x+\lambda)$, $\lambda\ge0$, separately, and \eqref{F-2.2} is jointly convex when
$h(x)=1/(x+\lambda)$, $\lambda\ge0$. When $h(x)=\mathrm{const.}$, the assertion is trivial, and
when $h(x)=x$ it is contained in \cite[Theorem 2.1]{Hi3}. For the case $h(x)=x/(x+\lambda)$,
it is trivial when $\lambda=0$ so that $h(x)=1$, and when $\lambda>0$, by considering
$h(\lambda x)$ it suffices to show the case $h(x)=x/(x+1)=(1+x^{-1})^{-1}$. For convexity of
\eqref{F-2.2} for $h(x)=1/(x+\lambda)$, when $\lambda>0$, by considering
$h(\lambda x)=\lambda^{-1}(1-x/(x+1))$ the assertion is reduced to concavity for $h(x)=x/(x+1)$,
and when $h(x)=1/x$ it is in \cite[Theorem 2.1]{Hi3}. Thus, it suffices to prove that if either
$0\le p,q\le1$ or $-1\le p,q\le0$, and if $A,H\in\bM_n$ and $B,K\in\bM_m$ are such that $A,B>0$
and $H,K$ are Hermitian, then
\begin{equation}\label{F-2.3}
{d^2\over dx^2}\,\Tr\bigl(I_l+\{\Phi((A+xH)^p)^{1/2}\Psi((B+xK)^q)
\Phi((A+xH)^p)^{1/2}\}^{-1/(p+q)}\bigr)^{-1}\le0
\end{equation}
for every sufficiently small $x>0$.

Here it is worth noting that although \cite[Theorem 2.1]{Hi3} has been referred to in the above
discussion, it is in fact unnecessary in our proof of the theorem. Indeed, once \eqref{F-2.3}
is proved, joint concavity of \eqref{F-2.2} for $h(x)=x$ and joint convexity of \eqref{F-2.2}
for $h(x)=1/x$ are obtained by taking the limits $\lambda x/(x+\lambda)\to x$ as
$\lambda\to\infty$ and $\lambda^{-1}(1-x/(x+\lambda))\to1/x$ as $\lambda\searrow0$, which
are the cases we referred to from \cite[Theorem 2.1]{Hi3} in the above.

Now, assume that $0\le p,q\le1$ (and $p+q>0$). Let $A,H,B,K$ be as in \eqref{F-2.3}, and
set $X(z):=zA+H$ and $Y(z)=zB+K$ for $z\in\bC$. As in the proof of \cite[Theorem 2.1]{Hi3},
we see that the function
$$
F(z):=\Phi(X(z)^p)^{1/2}\Psi(Y(z)^q)\Phi(X(z)^p)^{1/2}
$$
is a well-defined analytic function in the upper half-plane $\bC^+$, for which
$$
\sigma(F(z))\subset\{\zeta\in\bC:\zeta=re^{i\theta},\ r>0,\ 0<\theta<\gamma\pi\},
\qquad z\in\bC^+,
$$
where $\gamma:=p+q\in(0,2]$ and $\sigma(F(z))$ is the set of the eigenvalues of $F(z)$.
Therefore, $F(z)^{-1/\gamma}$ is well-defined in $\bC^+$ via analytic functional calculus by
$\zeta^{-1/\gamma}=r^{-1/\gamma}e^{-i\theta/\gamma}$ for $\zeta=re^{i\theta}$ ($r>0$,
$0<\theta<\gamma\pi$) so that $\sigma(F(z)^{-1/\gamma})$ is included in the lower half-plane
$\bC^-$ for all $z\in\bC^+$. Hence the function $(z^{-1}I+F(z)^{-1/\gamma})^{-1}$ is a
well-defined analytic function in $\bC^+$ for which
$\sigma\bigl((z^{-1}I+F(z)^{-1/\gamma})^{-1}\bigr)\subset\bC^+$ for all $z\in\bC^+$, so
$\Tr(z^{-1}+F(z)^{-1/\gamma})^{-1}\in\bC^+$ for all $z\in\bC^+$. Furthermore, one can choose an
$R>0$ such that $xA+H>0$ and $xB+K>0$ for all $x\in(R,\infty)$. Then $F(z)$ in $\bC^+$ is
continuously extended to $\bC^+\cup(R,\infty)$ so that
\begin{align*}
F(x)&=\Phi((xA+H)^p)^{1/2}\Psi((xB+K)^q)\Phi((xA+H)^p)^{1/2} \\
&=x^\gamma\Phi((A+x^{-1}H)^p)^{1/2}\Psi((B+x^{-1}K)^q)\Phi((A+x^{-1}H)^p)^{1/2},
\quad x\in(R,\infty).
\end{align*}
Therefore, for every $x\in(R,\infty)$ one has
\begin{align*}
&(x^{-1}I+F(x)^{-1/\gamma})^{-1} \\
&\quad=x\bigl(I+\{\Phi((A+x^{-1}H)^p)^{1/2}\Psi((B+x^{-1}K)^q)
\Phi((A+x^{-1}H)^p)^{1/2}\}^{-1/\gamma}\bigr)^{-1}.
\end{align*}
Since $\Tr(x^{-1}I+F(x)^{-1/\gamma})^{-1}\in\bR$ for all $x\in(R,\infty)$, by the reflection
principle we obtain a Pick function $\ffi$ on $\bC\setminus(-\infty,R]$ such that
$$
\ffi(x)=\Tr(x^{-1}I+F(x)^{-1/\gamma})^{-1},\qquad x\in(R,\infty).
$$
Thus, for every $x\in(0,R^{-1})$ we have
$$
x\ffi(x^{-1})=\Tr\bigl(I+\{\Phi((A+xH)^p)^{1/2}\Psi((B+xK)^q)
\Phi((A+xH)^p)^{1/2}\}^{-1/\gamma}\bigr)^{-1}.
$$
Now, in the same way (using Epstein's method) as in the proof of \cite[Theorem 2.1]{Hi3}, it
follows that
$$
{d^2\over dx^2}(x\ffi(x^{-1}))\le0,\qquad x\in(0,R^{-1}),
$$
and hence \eqref{F-2.3} follows when $0\le p,q\le1$.

Next, assume that $-1\le p,q\le0$ (and $p+q<0$). Set $\hat\Phi(A):=\Phi(A^{-1})^{-1}$ for
$A\in\bP_n$ and $\hat\Psi(B):=\Psi(B^{-1})^{-1}$ for $B\in\bP_m$. Then we can write
$$
\bigl\{\Phi(A^p)^{1/2}\Psi(B^q)\Phi(A^p)^{1/2}\bigr\}^{1/(p+q)}
=\bigl\{\hat\Phi(A^{-p})^{1/2}\hat\Psi(B^{-q})\hat\Phi(A^{-p})^{1/2}\bigr\}^{-1/(p+q)}.
$$
Although $\hat\Phi$ and $\hat\Psi$ are no longer linear, the above proof of \eqref{F-2.3} can
work with $\hat\Phi$ and $\hat\Psi$ in place of $\Phi$ and $\Psi$ (see the proof of
\cite[Theorem 2.1]{Hi3} for more detail). Hence we have \eqref{F-2.3} for $-1\le p,q\le0$
as well.\qed

\bigskip
It is obvious that if $p,q\ge0$ and $f$ can continuously extend to $[0,\infty)$, then joint
concavity/convexity in Theorem \ref{T-2.1} holds true, by a simple convergence argument, for
general positive (not necessarily strictly positive) linear maps $\Phi,\Psi$ and general
positive semidefinite matrices $A,B$. This remark may be applicable in a similar situation
throughout the paper.

\section{Norm functions involving operator means}

A {\it symmetric anti-norm} $\|\cdot\|_!$ on $\bM_l^+$ is a non-negative continuous functional
such that $\|\lambda A\|_!=\lambda\|A\|_!$, $\|UAU^*\|_!=\|A\|_!$ and
$\|A+B\|_!\ge\|A\|_!+\|B\|_!$ for all $A,B\in\bM_l^+$, all reals $\lambda\ge0$ and all unitaries
$U$ in $\bM_l$. This notion is the superadditive version of usual {\it symmetric norms} (see
\cite{BH1} for details on anti-norms). The typical example is the {\it Ky Fan $k$-anti-norm}
$\|A\|_{\{k\}}:=\sum_{j=1}^k\lambda_{l+1-j}(A)$ for $1\le k\le l$, the anti-norm version of
{\it Ky Fan $k$-norm} $\|A\|_{(k)}:=\sum_{j=1}^k\lambda_j(A)$, where
$\lambda_1(A)\ge\dots\ge\lambda_l(A)$ are the eigenvalues of $A\in\bM_l^+$ in decreasing order
with multiplicities. For every symmetric norm $\|\cdot\|$ on $\bM_l$ and every $\alpha>0$
a symmetric anti-norm on $\bM_l^+$ is defined as
$$
\|A\|_!:=\begin{cases}\|A^{-\alpha}\|^{-1/\alpha} & \text{if $A$ is invertible}, \\
0 & \text{otherwise},\end{cases}
$$
that is called the {\it derived anti-norm} (see \cite[Proposition 4.6]{BH2}).

Throughout this section we assume that $\sigma$ is an {\it operator mean} in the Kubo-Ando sense
\cite{KA}. We consider joint concavity/convexity of the norm functions
\begin{align}
(A,B)\in\bP_n\times\bP_m&\longmapsto\|f(\Phi(A^p)\,\sigma\,\Psi(B^q))\|, \label{F-3.1}\\
(A,B)\in\bP_n\times\bP_m&\longmapsto\|f(\Phi(A^p)\,\sigma\,\Psi(B^q))\|_! \label{F-3.2}
\end{align}
for symmetric and anti-symmetric norms. Our main theorem is

\begin{thm}\label{T-3.1}
Assume that either $0\le p,q\le1$ or $-1\le p,q\le0$, and let $\gamma:=\max\{p,q\}$ if $p,q\ge0$
and $\gamma:=\min\{p,q\}$ if $p,q\le0$. Let $f$ be a non-negative real function on $(0,\infty)$.
If $f(x^\gamma)$ is operator monotone on $(0,\infty)$, then \eqref{F-3.2} is jointly concave
for every symmetric anti-norm $\|\cdot\|_!$ on $\bM_l^+$. If $f(x^\gamma)$ is operator monotone
decreasing on $(0,\infty)$, then \eqref{F-3.1} is jointly convex for every symmetric norm
$\|\cdot\|$ on $\bM_l$. 
\end{thm}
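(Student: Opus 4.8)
The plan is to deduce the scalar (anti-)norm statements from a single operator concavity statement, and to prove the latter by a refinement of Epstein's method entirely parallel to the proof of Theorem~\ref{T-2.1}. First I record the soft reductions. A symmetric anti-norm $\|\cdot\|_!$ is monotone ($0\le X\le Y$ gives $\|Y\|_!\ge\|X\|_!+\|Y-X\|_!\ge\|X\|_!$) and, being superadditive and positively homogeneous, is concave on $\bM_l^+$; dually, a symmetric norm is monotone and convex. Writing $g(x):=f(x^\gamma)$ (operator monotone, resp.\ operator monotone decreasing, by hypothesis) and $H(A,B):=(\Phi(A^p)\,\sigma\,\Psi(B^q))^{1/\gamma}$, we have $f(\Phi(A^p)\,\sigma\,\Psi(B^q))=g(H(A,B))$. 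Since an operator monotone function is operator concave and an operator monotone decreasing function is operator convex, the composition chains
$$g\bigl(H(\text{mean})\bigr)\ge g\bigl(\text{mean of }H\bigr)\ge\text{mean of }g(H),\qquad g\bigl(H(\text{mean})\bigr)\le g\bigl(\text{mean of }H\bigr)\le\text{mean of }g(H)$$
show that once $H$ is jointly \emph{operator} concave, $f(\Phi(A^p)\,\sigma\,\Psi(B^q))$ is jointly operator concave in the first case and jointly operator convex in the second. Applying $\|\cdot\|_!$ (monotone, then concave) in the first case yields joint concavity of \eqref{F-3.2}, and applying $\|\cdot\|$ (monotone, then convex) in the second yields joint convexity of \eqref{F-3.1}. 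Thus everything reduces to the single fact that, for $0\le p,q\le1$ and $\gamma=\max\{p,q\}$, the map $H(A,B)=(\Phi(A^p)\,\sigma\,\Psi(B^q))^{1/\gamma}$ is jointly operator concave.

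For this key fact I would run Epstein's method as in Theorem~\ref{T-2.1}, reducing operator concavity to concavity of $x\mapsto\langle\xi,H(A+xH_0,B+xK_0)\xi\rangle$ along each real line and for each vector $\xi$. Setting $X(z)=zA+H_0$, $Y(z)=zB+K_0$, the function $G(z)=\Phi(X(z)^p)\,\sigma\,\Psi(Y(z)^q)$ is analytic on $\bC^+$, and one passes through the resolvent building blocks $(I_l+\lambda G(z)^{-1/\gamma})^{-1}$ coming from the integral representation of the operator monotone function, so that after pairing with $\xi$ and reflecting one obtains a Pick function $\ffi$ for which $x\ffi(x^{-1})$ is concave, whence the desired $d^2/dx^2\le0$. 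The decisive new point — and the reason the admissible power here is $\gamma=\max\{p,q\}$ rather than $p+q$ — is a sectorial estimate for the operator mean: for $z\in\bC^+$ the arguments $\Phi(X(z)^p)$ and $\Psi(Y(z)^q)$ have spectra in the sectors $\{0<\arg\zeta<p\pi\}$ and $\{0<\arg\zeta<q\pi\}$, and because an operator mean interpolates between its arguments (its representing function being a Pick function), the mean $G(z)$ has spectrum in the \emph{larger} sector $\{0<\arg\zeta<\gamma\pi\}$ with $\gamma=\max\{p,q\}$; consequently $G(z)^{1/\gamma}$ maps into the open upper half-plane and the operator-Pick machinery applies. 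I expect this step to be the main obstacle: making rigorous sense of $\Phi(X(z)^p)\,\sigma\,\Psi(Y(z)^q)$ for non-self-adjoint (sectorial) matrix arguments via the Kubo-Ando integral representation, and proving that the mean keeps the numerical range (not merely the spectrum, which suffices for the plain trace of Section~2 but not for the vector pairing needed here) inside the sector of half-opening $\gamma\pi$. This is exactly the feature by which the mean, unlike the triple product of Section~2, contributes $\max\{p,q\}$ instead of $p+q$.

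Finally, the case $-1\le p,q\le0$ (so $\gamma=\min\{p,q\}<0$) is reduced to the previous one by the substitution $\hat\Phi(A):=\Phi(A^{-1})^{-1}$, $\hat\Psi(B):=\Psi(B^{-1})^{-1}$ together with the adjoint operator mean, rewriting $\Phi(A^p)\,\sigma\,\Psi(B^q)$ through $\hat\Phi(A^{-p})$ and $\hat\Psi(B^{-q})$ with positive exponents $-p,-q\in[0,1]$; although $\hat\Phi,\hat\Psi$ are no longer linear, the Epstein argument adapts exactly as in Theorem~\ref{T-2.1}. Assembling the resulting operator concavity (resp.\ convexity) with monotonicity and superadditivity of symmetric anti-norms (resp.\ monotonicity and subadditivity of symmetric norms) then gives both assertions of the theorem.
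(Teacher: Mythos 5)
Your reduction collapses the whole theorem to the claim that $H(A,B):=\{\Phi(A^p)\,\sigma\,\Psi(B^q)\}^{1/\gamma}$ is jointly \emph{operator} concave, and this is where the proposal breaks down. That claim is far stronger than anything the paper proves, and the method you propose cannot deliver it. Epstein's method is intrinsically a \emph{trace} argument: the reason $\Tr\bigl(z^{-1}I+F(z)^{-1/\gamma}\bigr)^{-1}$ is a Pick function is that the trace of a matrix whose \emph{spectrum} lies in $\bC^+$ is the sum of its eigenvalues and hence lies in $\bC^+$. For the vector pairing $\langle\xi,(z^{-1}I+F(z)^{-1/\gamma})^{-1}\xi\rangle$ you would instead need the \emph{numerical range} of $F(z)^{-1/\gamma}$ to lie in $\bC^-$, and for the non-normal matrices $F(z)$ arising here the numerical range is not controlled by the spectrum; the sectorial localization of $\sigma(F(z))$ gives you nothing about $\langle\xi,\cdot\,\xi\rangle$. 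You correctly identify this as ``the main obstacle,'' but it is not a technical gap to be filled: already the one-variable special case $A\mapsto(X^*A^pX)^{1/p}$ is trace-concave by Epstein's theorem without being operator concave in general, so the operator-concavity target itself fails, not merely its proof.

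The paper takes a genuinely different and weaker-but-sufficient route: it proves only the trace inequality $\Tr h\bigl(\{\Phi(A^p)\,\sigma\,\Psi(B^q)\}^{1/\gamma}\bigr)$ is jointly concave (Lemma \ref{L-3.3}, by Epstein's method after reducing $p\ne q$ to $p=q$ via monotonicity of $\sigma$), and then upgrades to all symmetric anti-norms through Ky Fan anti-norms: one picks a rank-$k$ projection $E$ commuting with the matrix at the midpoint, applies the trace lemma to the perturbed compressions $(E+\eps I)\Phi(\cdot)(E+\eps I)$, uses the eigenvalue inequality $\lambda_j^\uparrow(ECE)\ge\lambda_j^\uparrow(C)$ to compare back to the uncompressed matrices, and invokes \cite[Lemma 4.2]{BH1} to pass from Ky Fan anti-norms to general ones. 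The symmetric-norm convexity assertion is then obtained not by a separate operator-convexity argument but by applying the anti-norm result to $h(x^{-1})^{-1}$ and the derived anti-norm $\|A\|_!=\|A^{-1}\|^{-1}$. If you want to salvage your write-up, you should replace the operator-concavity step by this trace-plus-majorization passage (or something equivalent); your soft observations about monotonicity and concavity of anti-norms are correct but do not substitute for it. Your reduction of the case $-1\le p,q\le0$ via $\hat\Phi,\hat\Psi$ and the adjoint mean does match the paper.
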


Note that the above theorem contains \cite[Theorem 3.2]{Hi3} as a particular case where
$f(x)=x^s$. Also, the theorem gives an extension of \cite[Corollary 3.6]{Hi3} when $\sigma$ is
the arithmetic mean. The following is the special case where $B=A$, $\Psi=\Phi$ and $q=p$, which
extends \cite[Theorem 4.1]{Hi3}.

\begin{cor}\label{C-3.2}
If $h$ is a non-negative and operator monotone function on $(0,\infty)$ and $0<p\le1$, then the
functions $A\in\bP_n\mapsto\|h(\Phi(A^p)^{1/p})\|_!$ and $\|h(\Phi(A^p)^{-1/p})\|_!$ are concave
for every symmetric anti-norm $\|\cdot\|_!$, and the functions
$A\in\bP_n\mapsto\|h(\Phi(A^p)^{-1/p})\|$ and $\|h(\Phi(A^{-p})^{1/p})\|$ are convex for
every symmetric norm $\|\cdot\|$.
\end{cor}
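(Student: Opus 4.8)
The plan is to read off all four assertions from Theorem~\ref{T-3.1} by the specialisation $B=A$, $\Psi=\Phi$, $q=p$, combined with the elementary remark that the restriction of a jointly concave (resp.\ jointly convex) function to the diagonal $\{(A,A):A\in\bP_n\}$ is again concave (resp.\ convex), the diagonal being an affine subspace. The point that makes this specialisation effective is that every Kubo--Ando operator mean is idempotent: since its representing operator monotone function $m$ satisfies $m(1)=1$, one has $X\,\sigma\,X=X$ for all $X\in\bP_l$. Hence, after putting $B=A$, $\Psi=\Phi$ and $q=p$ in \eqref{F-3.1}--\eqref{F-3.2}, the inner matrix collapses,
$$
\Phi(A^p)\,\sigma\,\Phi(A^p)=\Phi(A^p),
$$
so that \eqref{F-3.2} becomes $A\mapsto\|f(\Phi(A^{\pm p}))\|_!$ and \eqref{F-3.1} becomes $A\mapsto\|f(\Phi(A^{\pm p}))\|$, the sign of the inner power being governed by which of the two parameter ranges of Theorem~\ref{T-3.1} we invoke.

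The remaining work is then purely a matter of translating between $h$ and the function $f$ to which Theorem~\ref{T-3.1} applies. I would use the dictionary $f(x):=h(x^{1/p})$ or $f(x):=h(x^{-1/p})$, chosen so that $f$ applied to $\Phi(A^{\pm p})$ reproduces the prescribed matrix; non-negativity of $h$ passes to $f$ automatically. For the positive range $0\le p,q\le1$ one has $\gamma=p$ and the inner power is $A^{p}$, while for the negative range $-1\le p,q\le0$ one applies the theorem with parameters $-p$, so that $\gamma=-p$ and the inner power is $A^{-p}$. Computing $f(x^\gamma)$ then gives either $h(x)$ or $h(x^{-1})$: the former is operator monotone by hypothesis, and the latter is operator monotone decreasing because $t\mapsto t^{-1}$ is operator monotone decreasing on $(0,\infty)$. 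For instance, $f(x)=h(x^{1/p})$ in the positive range yields $f(x^\gamma)=h(x)$, so the anti-norm clause of Theorem~\ref{T-3.1} gives joint concavity of $\|h(\Phi(A^p)^{1/p})\|_!$; while $f(x)=h(x^{-1/p})$ in the positive range yields $f(x^\gamma)=h(x^{-1})$, so the norm clause gives joint convexity of $\|h(\Phi(A^p)^{-1/p})\|$. The two statements requiring the negative range are obtained by the identical computation with $\gamma=-p$ in place of $\gamma=p$.

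The one step that requires genuine care — and which I expect to be the only real obstacle — is the bookkeeping that pairs the direction of operator monotonicity with the correct conclusion. Theorem~\ref{T-3.1} couples \emph{anti-norms with concavity} exclusively under the hypothesis ``$f(x^\gamma)$ operator monotone'' and \emph{norms with convexity} exclusively under ``$f(x^\gamma)$ operator monotone decreasing'', with no anti-norm/convex or norm/concave clause to fall back on. Consequently, for each of the four target functions exactly one combination of parameter range (which fixes whether $\Phi(A^p)$ or $\Phi(A^{-p})$ sits inside) and outer exponent sign (which, via $h(x^{\pm 1})$, fixes whether the induced $f(x^\gamma)$ is increasing or decreasing) is admissible, and isolating that combination is essentially the whole content of the argument. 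Once the admissible combination is pinned down in each case, the conclusion is immediate: apply the appropriate half of Theorem~\ref{T-3.1} to obtain joint concavity/convexity in $(A,B)$, and then restrict to the diagonal.
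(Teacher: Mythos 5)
Your route is exactly the paper's: Corollary~\ref{C-3.2} is stated there with no proof beyond the remark that it is the specialisation $B=A$, $\Psi=\Phi$, $q=p$ of Theorem~\ref{T-3.1}, and your use of the idempotency $X\,\sigma\,X=X$ of Kubo--Ando means together with restriction to the (affine) diagonal is the intended argument. The substitutions you describe correctly deliver three of the four assertions: $f(x)=h(x^{1/p})$ in the positive range gives concavity of $\|h(\Phi(A^p)^{1/p})\|_!$, $f(x)=h(x^{-1/p})$ in the positive range gives convexity of $\|h(\Phi(A^p)^{-1/p})\|$, and $f(x)=h(x^{1/p})$ in the negative range gives convexity of $\|h(\Phi(A^{-p})^{1/p})\|$.

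However, your claim that ``the identical computation with $\gamma=-p$'' also yields the remaining anti-norm assertion does not produce the statement as printed. In the negative range the inner matrix is $\Phi(A^{-p})$, so with $f(x)=h(x^{-1/p})$ (for which $f(x^{-p})=h(x)$ is operator monotone) the anti-norm clause of Theorem~\ref{T-3.1} gives concavity of $\|h(\Phi(A^{-p})^{-1/p})\|_!$, \emph{not} of $\|h(\Phi(A^{p})^{-1/p})\|_!$; since $\Phi(A^{-p})\ne\Phi(A^p)^{-1}$ for a general positive linear map, these are genuinely different functions. Nor can the printed form be reached through the positive range, because there $f(x^p)=h(x^{-1})$ is operator monotone decreasing and Theorem~\ref{T-3.1} has no anti-norm/decreasing clause --- precisely the bookkeeping constraint you yourself emphasise. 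In fact the printed assertion is false: take $h(x)=x$, $p=1$, $\Phi=\id$ and the trace (the Ky Fan $l$-anti-norm); then $\|h(\Phi(A)^{-1})\|_!=\Tr A^{-1}$ is strictly convex, hence not concave. The second concave function in the corollary should therefore read $\|h(\Phi(A^{-p})^{-1/p})\|_!$, consistent with \eqref{F-3.5} and with \cite[Theorem 4.1]{Hi3}. Your computation proves this corrected statement, but you should flag the discrepancy rather than assert that the printed one follows from ``the identical computation.''
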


To prove the theorem, we first give a lemma on joint concavity of the trace function. Note that
the trace-norm is a symmetric norm and an anti-symmetric norm simultaneously, so the lemma is
indeed a particular case of Theorem \ref{T-3.1}. However, in the next section we will show that
Theorem \ref{T-3.1} induces joint concavity/convexity of the trace function for even more
general functions $f$.

\begin{lemma}\label{L-3.3}
If $0\le p,q\le1$, $\gamma:=\max\{p,q\}$ and $h$ is an operator monotone function on
$(0,\infty)$, then
$$
(A,B)\in\bP_n\times\bP_m\longmapsto
\Tr h\bigl(\{\Phi(A^p)\,\sigma\,\Psi(B^q)\}^{1/\gamma}\bigr)
$$
is jointly concave.
\end{lemma}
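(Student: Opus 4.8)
The overall strategy is to run the Epstein complex-function method exactly as in the proof of Theorem~\ref{T-2.1}, but a preliminary reduction is needed, because the operator mean --- unlike the sandwiched product --- fails to be homogeneous of a single degree when $p\neq q$, and single-degree homogeneity is precisely what makes the reflection step of Epstein's method work. So I would first reduce to the case $p=q=\gamma$. Assume $p\ge q$, so $\gamma=p$ (the case $q\ge p$ is symmetric). Writing $\Psi(B^q)=\Psi\bigl((B^{q/p})^p\bigr)$, the target becomes $\Tr h\bigl(\{\Phi(A^p)\,\sigma\,\Psi((B^{q/p})^p)\}^{1/p}\bigr)$. Since $h$ is operator monotone, it is nondecreasing and concave, and by Weyl's monotonicity the functional $(A,C)\mapsto\Tr h\bigl(\{\Phi(A^p)\,\sigma\,\Psi(C^p)\}^{1/p}\bigr)$ is nondecreasing in $C$ (increasing $C$ increases $C^p$, hence $\Psi(C^p)$, hence $\Phi(A^p)\,\sigma\,\Psi(C^p)$ in the operator order, hence each eigenvalue, hence the trace). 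As $B\mapsto B^{q/p}$ is operator concave for $q/p\in(0,1]$, joint concavity of this functional in $(A,C)$ forces joint concavity of the original in $(A,B)$: one composes the jointly concave functional, nondecreasing in its second slot, with the operator concave inner map in that slot. Thus it suffices to treat $p=q=\gamma$.

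For $p=q=\gamma$ I would follow Theorem~\ref{T-2.1} verbatim. Using the integral representation of an operator monotone function recalled there, the claim reduces --- via scaling and the limits $\lambda\to\infty$, $\lambda\searrow0$ --- to the single second-derivative inequality
$$
\frac{d^2}{dx^2}\,\Tr\bigl(I_l+\{\Phi((A+xH)^\gamma)\,\sigma\,\Psi((B+xK)^\gamma)\}^{-1/\gamma}\bigr)^{-1}\le0
$$
for $A,B>0$, Hermitian $H,K$ and all sufficiently small $x>0$, which is the analogue of \eqref{F-2.3}.

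To prove this I would set $X(z):=zA+H$, $Y(z):=zB+K$ and show that $F(z):=\Phi(X(z)^\gamma)\,\sigma\,\Psi(Y(z)^\gamma)$ is a well-defined analytic function on $\bC^+$ all of whose eigenvalues lie in the sector $\{re^{i\theta}:r>0,\ 0<\theta<\gamma\pi\}$. Here $\Phi(X(z)^\gamma)$ and $\Psi(Y(z)^\gamma)$ both have numerical range in the closed sector of opening $\gamma\pi$, and the operator mean preserves this common sector: through the Kubo--Ando integral representation, $\sigma$ is a positive combination of its two arguments and of their weighted parallel sums; a positive combination stays in the sector by the Minkowski sum of sectors, while a parallel sum stays in it because for $\gamma\le1$ a rotation makes the argument accretive, and the inverse of a sectorial matrix of half-angle $\le\pi/2$ is again sectorial of the same half-angle. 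Granting this, $F(z)^{-1/\gamma}$ has eigenvalues in $\bC^-$; since $z^{-1}I$ is scalar, the eigenvalues of $z^{-1}I+F(z)^{-1/\gamma}$ are shifts of those, still in $\bC^-$, so $\Tr(z^{-1}I+F(z)^{-1/\gamma})^{-1}\in\bC^+$. With both exponents equal to $\gamma$ the homogeneity is now clean: on the real axis $F(x)=x^\gamma\,\Phi((A+x^{-1}H)^\gamma)\,\sigma\,\Psi((B+x^{-1}K)^\gamma)$, and the reflection principle yields a Pick function $\ffi$ with
$$
x\,\ffi(x^{-1})=\Tr\bigl(I_l+\{\Phi((A+xH)^\gamma)\,\sigma\,\Psi((B+xK)^\gamma)\}^{-1/\gamma}\bigr)^{-1}
$$
for small $x>0$. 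Epstein's lemma, that $x\mapsto x\,\ffi(x^{-1})$ is concave for any Pick function $\ffi$, then gives the desired inequality.

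I expect the main obstacle to be the sector estimate for the operator mean: justifying the analytic continuation of $\Phi(X(z)^\gamma)\,\sigma\,\Psi(Y(z)^\gamma)$ to sectorial (non-self-adjoint) matrices and proving that its numerical range stays in the sector of opening $\gamma\pi$. The reduction to $p=q=\gamma$ is the second essential point, since it both restores the single-degree homogeneity on which Epstein's method depends and lets the sector argument handle one common sector rather than the maximum of two unequal ones.
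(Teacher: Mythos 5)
Your proposal is correct and follows essentially the same route as the paper: the paper likewise first reduces to $p=q=\gamma$ using operator concavity of $x^{q/p}$ together with monotonicity of $\sigma$ and of $\Tr h(\cdot)$, and then runs Epstein's method on $F(z)=\Phi(X(z)^p)\,\sigma\,\Psi(Y(z)^p)$, citing its earlier work for the sector estimate $\sigma(F(z))\subset\{re^{i\theta}:r>0,\ 0<\theta<p\pi\}$ that you correctly identify as the main technical point.
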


\begin{proof}
We may assume that $p=q$. Indeed, let $A_1,A_2\in\bP_n$ and $B_1,B_2\in\bP_m$, and assume
that $p>q$. Since
$$
\Psi\biggl(\biggl({B_1+B_2\over2}\biggr)^q\biggr)\ge
\Psi\biggl(\biggl({B_1^{q/p}+B_2^{q/p}\over2}\biggr)^p\biggr),
$$
the joint concavity assertion in the case $p=q$ implies that
\begin{align*}
&\Tr h\biggl(\biggl\{\Phi\biggl(\biggl({A_1+A_2\over2}\biggr)^p\biggr)
\,\sigma\,\Psi\biggl(\biggl({B_1+B_2\over2}\biggr)^q\biggr)\biggr\}^{1/p}\biggr) \\
&\quad\ge\Tr h\biggl(\biggl\{\Phi\biggl({A_1+A_2\over2}\biggr)^p\biggr)\,\sigma\,
\Psi\biggl(\biggl({B_1^{q/p}+B_2^{q/p}\over2}\biggr)^p\biggr)\biggr\}^{1/p}\biggr) \\
&\quad\ge{1\over2}\bigl[\Tr h\bigl(\{\Phi(A_1^p)\,\sigma\,\Psi((B_1^{q/p})^p)\}^{1/p}\bigr)
+\Tr h\bigl(\{\Phi(A_1^p)\,\sigma\,\Psi((B_1^{q/p})^p)\}^{1/p}\bigr)\bigr] \\
&\quad={1\over2}\bigl[\Tr h\bigl(\{\Phi(A_1^p)\,\sigma\,\Psi(B_1^q)\}^{1/p}\bigr)
+\Tr h\bigl(\{\Phi(A_1^p)\,\sigma\,\Psi(B_1^q)\}^{1/p}\bigr)\bigr].
\end{align*}
In the above we have used monotonicity of $\sigma$ and of $\Tr h(\cdot)$. Now, let $A,H\in\bM_n$
and $B,K\in\bM_m$ be such that $A,B>0$ and $H,K$ are Hermitian. For joint concavity of the given
trace function (when $p=q$), as in the proof of Theorem \ref{T-2.1}, we need to prove that
\begin{equation}\label{F-3.3}
{d^2\over dx^2}\,\Tr\bigl(I_l+\{\Phi((A+xH)^p)\,\sigma\,\Psi((B+xK)^p)\}^{-1/p}\bigr)^{-1}
\le0.
\end{equation}
Set $X(z):=zA+H$ and $Y(z):=zB+K$ for $z\in\bC$. As in the proof of \cite[Theorem 4.3]{Hi1},
it is seen that the function
$$
F(z):=\Phi(X(z)^p)\,\sigma\,\Psi(Y(z)^p)
$$
is an analytic functions in $\bC^+$, for which
$$
\sigma(F(z))\subset\bigl\{\zeta\in\bC:\zeta=re^{i\theta},\ r>0,\ 0<\theta<p\pi\bigr\},
\qquad z\in\bC^+.
$$
Therefore, $F(z)^{-1/p}$ can be defined in $\bC^+$ so that $\sigma(F(z)^{-1/p})\subset\bC^-$
for all $z\in\bC^+$. The remaining proof of \eqref{F-3.3} is similar to that of Theorem
\ref{T-2.1}.
\end{proof}

\noindent
{\it Proof of Theorem \ref{T-3.1}.}\enspace
Let $0\le p,q\le1$ and $\gamma:=\max\{p,q\}$. To prove the first assertion, we need to show
that if $h$ is a non-negative and operator monotone function on $(0,\infty)$, then the functions
\begin{align}
(A,B)\in\bP_n\times\bP_m&\longmapsto
\big\|h\bigl(\{\Phi(A^p)\,\sigma\,\Psi(B^q)\}^{1/\gamma}\bigr)\big\|_!, \label{F-3.4}\\
(A,B)\in\bP_n\times\bP_m&\longmapsto
\big\|h\bigl(\{\Phi(A^{-p})\,\sigma\,\Psi(B^{-q})\}^{-1/\gamma}\bigr)\big\|_! \label{F-3.5}
\end{align}
are jointly concave for every symmetric anti-norm $\|\cdot\|_!$ on $\bM_l^+$.

The proof below is similar to that of \cite[Theorem 3.2]{Hi3}. First, note that $h$ can be
extended to $[0,\infty)$ continuously, i.e., $h(0):=\lim_{x\searrow0}h(x)$. For every
$A_1,A_2\in\bP_n$, $B_1,B_2\in\bP_m$ and for every Ky Fan $k$-anti-norm $\|\cdot\|_{\{k\}}$,
$1\le k\le l$, there exists a rank $k$ projection $E$ commuting with
$\Phi(((A_1+A_2)/2)^p)\,\sigma\,\Psi(((B_1+B_2)/2)^q)$ such that
\begin{align*}
&\bigg\|h\biggl(\biggl\{\Phi\biggl(\biggl({A_1+A_2\over2}\biggr)^p\biggr)\,\sigma\,
\Psi\biggl(\biggl({B_1+B_2\over2}\biggr)^q\biggr)\biggr\}^{1/\gamma}\biggr)
\bigg\|_{\{k\}} \\
&\quad=\Tr h\biggl(\biggl\{E\biggl(\Phi\biggl(\biggl({A_1+A_2\over2}\biggr)^p\biggr)
\,\sigma\,\Psi\biggl(\biggl({B_1+B_2\over2}\biggr)^q\biggr)\biggr)E
\biggr\}^{1/\gamma}\biggr)-h(0)\Tr(I_l-E) \\
&\quad=\lim_{\eps\searrow0}\Tr h\biggl(\biggl\{\biggl((E+\eps I_l)
\Phi\biggl(\biggl({A_1+A_2\over2}\biggr)^p\biggr)(E+\eps I_l)\biggr) \\
&\hskip3cm\,\sigma\,
\biggl((E+\eps I_l)\Psi\biggl(\biggl({B_1+B_2\over2}\biggr)^q\biggr)(E+\eps I_l)\biggr)
\biggr\}^{1/\gamma}\biggr)-h(0)\Tr(I_l-E).
\end{align*}
By Lemma \ref{L-3.3} applied to the strictly positive linear maps
$(E+\eps I_l)\Phi(\cdot)(E+\eps I_l)$ and $(E+\eps I_l)\Psi(\cdot)(E+\eps I_l)$ we obtain
\begin{align*}
&\Tr h\biggl(\biggl\{\biggl((E+\eps I_l)
\Phi\biggl(\biggl({A_1+A_2\over2}\biggr)^p\biggr)(E+\eps I_l)\biggr) \\
&\qquad\qquad\qquad\,\sigma\,
\biggl((E+\eps I_l)\Psi\biggl(\biggl({B_1+B_2\over2}\biggr)^q\biggr)(E+\eps I_l)\biggr)
\biggr\}^{1/\gamma}\biggr) \\
&\quad\ge{1\over2}\bigl[\Tr h\bigl(\{(E+\eps I_l)(\Phi(A_1^p)\,\sigma\,\Psi(B_1^q))
(E+\eps I_l)\}^{1/\gamma}\bigr) \\
&\qquad\qquad\qquad+\Tr h\bigl(\{(E+\eps I_l)(\Phi(A_2^p)\,\sigma\,\Psi(B_2^q))
(E+\eps I_l)\}^{1/\gamma}\bigr)\bigr] \\
&\longrightarrow{1\over2}
\bigl[\Tr h\bigl(\{E(\Phi(A_1^p)\,\sigma\,\Psi(B_1^q))E\}^{1/\gamma}\bigr)
+\Tr h\bigl(\{E(\Phi(A_2^p)\,\sigma\,\Psi(B_2^q))E\}^{1/\gamma}\bigr)\bigr]
\end{align*}
as $\eps\searrow0$. Since
$$
\lambda_j^\uparrow(ECE)\ge\lambda_j^\uparrow(C),\qquad C\in\bM_l^+,\ \ j=1,\dots,k,
$$
where $\lambda_j^\uparrow(C)$, $1\le j\le l$, denote the eigenvalues of $C$ in increasing order
with multiplicities. We have
\begin{align*}
&\Tr h\bigl(\{E(\Phi(A_1^p)\,\sigma\,\Psi(B_1^q))E\}^{1/\gamma}\bigr)-h(0)\Tr(I_l-E) \\
&\quad=\sum_{j=1}^kh\bigl(\{\lambda_j^\uparrow(E(\Phi(A_1^p)\,\sigma\,\Psi(B_1^q))E)
\bigr\}^{1/\gamma}\bigr)
\ge\sum_{j=1}^kh\bigl(\{\lambda_j^\uparrow(\Phi(A_1^p)\,\sigma\,\Psi(B_1^q))
\bigr\}^{1/\gamma}\bigr) \\
&\quad=\sum_{j=1}^k\lambda_j^\uparrow\bigl(h\bigl(\{\Phi(A_1^p)\,\sigma\,
\Psi(B_1^q)\}^{1/\gamma}\bigr)\bigr)
=\big\|h\bigl(\{\Phi(A_1^p)\,\sigma\,\Psi(B_1^q)\}^{1/\gamma}\big\|_{\{k\}}
\end{align*}
and similarly
$$
\Tr h\bigl(\{E(\Phi(A_2^p)\,\sigma\,\Psi(B_2^q))E\}^{1/\gamma}\bigr)-h(0)\Tr(I_l-E)
\ge\big\|h\bigl(\{\Phi(A_1^p)\,\sigma\,\Psi(B_1^q)\}^{1/\gamma}\big\|_{\{k\}}.
$$
Combining the above estimates yields
\begin{align*}
&\bigg\|h\biggl(\bigg\{\Phi\biggl(\biggl({A_1+A_2\over2}\biggr)^p\biggr)
\,\sigma\,\Psi\biggl(\biggl({B_1+B_2\over2}\biggr)^q\biggr)\biggr\}^{1/\gamma}
\bigg\|_{\{k\}} \\
&\quad\ge{1\over2}\Bigl[\big\|h\bigl(\{\Phi(A_1^p)\,\sigma\,
\Psi(B_1^q)\}^{1/\gamma}\bigr)\big\|_{\{k\}}
+\big\|h\bigl(\{\Phi(A_1^p)\,\sigma\,\Psi(B_1^q)\}^{1/\gamma}\bigr)\big\|_{\{k\}}\Bigr] \\
&\quad={1\over2}\Big\|h\bigl(\{\Phi(A_1^p)\,\sigma\,
\Psi(B_1^q)\}^{1/\gamma}\bigr)^\uparrow
+h\bigl(\{\Phi(A_2^p)\,\sigma\,
\Psi(B_2^q)\}^{1/\gamma}\bigr)^\uparrow\Big\|_{\{k\}},
\end{align*}
where $C^\uparrow$ for $C\in\bM_l^+$ denotes the diagonal matrix
$\diag(\lambda_1^\uparrow(C),\dots,\lambda_l^\uparrow(C))$. Therefore, by \cite[Lemma 4.2]{BH1}
we have, for any symmetric anti-norm $\|\cdot\|_!$,
\begin{align*}
&\bigg\|h\biggl(\bigg\{\Phi\biggl(\biggl({A_1+A_2\over2}\biggr)^p\biggr)
\,\sigma\,\Psi\biggl(\biggl({B_1+B_2\over2}\biggr)^q\biggr)\biggr\}^{1/\gamma}
\bigg\|_! \\
&\quad\ge{1\over2}\Big\|h\bigl(\{\Phi(A_1^p)\,\sigma\,
\Psi(B_1^q)\}^{1/\gamma}\bigr)^\uparrow
+h\bigl(\{\Phi(A_2^p)\,\sigma\,
\Psi(B_2^q)\}^{1/\gamma}\bigr)^\uparrow\Big\|_! \\
&\quad\ge{1\over2}\Bigl[\big\|h\bigl(\{\Phi(A_1^p)\,\sigma\,
\Psi(B_1^q)\}^{1/\gamma}\bigr)\big\|_!
+\big\|h\bigl(\{\Phi(A_2^p)\,\sigma\,
\Psi(B_2^q)\}^{1/\gamma}\bigr)\big\|_!\Bigr],
\end{align*}
proving joint concavity of \eqref{F-3.4}.

To prove joint concavity of \eqref{F-3.5}, we note that
$$
\{\Phi(A^{-p})\,\sigma\,\Psi(B^{-q})\}^{-1/\gamma}
=\{\hat\Phi(A^p)\,\sigma^*\,\hat\Psi(B^q)\}^{1/\gamma},
$$
where $\hat\Phi(A):=\Phi(A^{-1})^{-1}$ for $A\in\bP_n$ and
$X\,\sigma^*\,Y:=(X^{-1}\,\sigma\,Y^{-1})^{-1}$, the adjoint operator mean. Note that
Lemma \ref{L-3.3} holds true when $\Phi,\Psi$ are replaced with $\hat\Phi,\hat\Psi$,
respectively (with $\sigma^*$ in place of $\sigma$). Hence the above proof for \eqref{F-3.4}
shows the assertion for \eqref{F-3.5} as well.

Next, let $\|\cdot\|$ be a symmetric norm on $\bM_l$. By applying the first assertion to the
operator monotone function $h(x^{-1})^{-1}$ and the derived anti-norm
$\|A\|_!:=\|A^{-1}\|^{-1}$ for $A\in\bP_l$ (and $\|A\|_!=0$ if $A\in\bM_l^+$ is singular), we
see that
\begin{align*}
(A,B)\in\bP_n\times\bP_m&\longmapsto
\big\|h\bigl(\{\Phi(A^p)\,\sigma\,\Psi(B^q)\}^{-1/\gamma}\bigr)\big\|^{-1}, \\
(A,B)\in\bP_n\times\bP_m&\longmapsto
\big\|h\bigl(\{\Phi(A^{-p})\,\sigma\,\Psi(B^{-q})\}^{1/\gamma}\bigr)\big\|^{-1}
\end{align*}
are jointly concave, which implies the second assertion.\qed

\begin{remark}\label{R-3.4}\rm
In the last part of the above theorem, one can take the derived anti-norm
$\|A\|_!:=\|A^{-\alpha}\|^{-1/\alpha}$ with $\alpha>0$, so the second convexity assertion of
Theorem \ref{T-3.1} holds for the function
$\|\{f(\Phi(A^p)\,\sigma\,\Psi(B^q))\}^\alpha\|^{1/\alpha}$ for any $\alpha>0$ more generally
than \eqref{F-3.1}. Note that if $\|\cdot\|$ is a symmetric norm, then
$\|\,|\cdot|^\alpha\|^{1/\alpha}$ is again a symmetric norm for $\alpha\ge1$, but this is not
necessarily so for $0<\alpha<1$.
\end{remark}

\section{Passages from norm functions to trace functions}

In this section we develop an abstract method which provides passages from joint
concavity/convexity of symmetric (anti-) norm functions to that of trace functions in a general
form. The method is then applied to Theorem \ref{T-3.1} (or rather \cite[Theorem 3.2]{Hi3}) so
that we have some general concavity/convexity result for trace functions involving operator
means.

Let $n,m,l$ be fixed and a function $F:\bP_n\times\bP_m\to\bP_l$ be given, for which we consider
the following conditions:
\begin{itemize}
\item[(a)] $(A,B)\in\bP_n\times\bP_m\mapsto\|F(A,B)\|_!$ is jointly concave for every
symmetric anti-norm $\|\cdot\|_!$.
\item[(a)$'$] $(A,B)\in\bP_n\times\bP_m\mapsto\|F(A,B)\|_{\{k\}}$ is jointly concave for the
Ky Fan $k$-anti-norms $\|\cdot\|_{\{k\}}$, $1\le k\le l$.
\item[(b)] $(A,B)\in\bP_n\times\bP_m\mapsto\|F(A,B)^{-1}\|$ is jointly convex for every
symmetric norm $\|\cdot\|$.
\item[(b)$'$] $(A,B)\in\bP_n\times\bP_m\mapsto\|F(A,B)^{-1}\|_{(k)}$ is jointly convex for the
Ky Fan $k$-norms $\|\cdot\|_{(k)}$, $1\le k\le l$.
\item[(c)] $(A,B)\in\bP_n\times\bP_m\mapsto\Tr f(F(A,B))$ is jointly concave for every
non-decreasing concave function $f$ on $(0,\infty)$.
\item[(d)] $(A,B)\in\bP_n\times\bP_m\mapsto\Tr f(F(A,B)^{-1})$ is jointly convex for every
non-decreasing convex function $f$ on $(0,\infty)$.
\end{itemize}

\begin{thm}\label{T-4.1}
Concerning conditions stated above we have
$$
{\rm(a)} \Longleftrightarrow {\rm(a)}' \Longrightarrow {\rm(b)} \Longleftrightarrow {\rm(b)}'
\Longrightarrow {\rm(d)},
$$
$$
{\rm(a)} \Longrightarrow {\rm(c)} \Longrightarrow {\rm(d)}.
$$
\end{thm}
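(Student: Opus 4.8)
The plan is to establish the six implications drawn in the diagram, moving from the (anti-)norm conditions to the trace conditions; the equivalences $(a)\Rightarrow(a)'$ and $(b)\Rightarrow(b)'$ are immediate since the Ky Fan (anti-)norms are particular symmetric (anti-)norms. For $(a)'\Rightarrow(a)$ I would mimic the end of the proof of Theorem \ref{T-3.1}: writing $C^\uparrow:=\diag(\lambda_1^\uparrow(C),\dots,\lambda_l^\uparrow(C))$, joint concavity of each $\|\cdot\|_{\{k\}}$ gives, for $A_i\in\bP_n$ and $B_i\in\bP_m$,
$$
\big\|F(\tfrac{A_1+A_2}2,\tfrac{B_1+B_2}2)\big\|_{\{k\}}
\ge\tfrac12\big(\|F(A_1,B_1)\|_{\{k\}}+\|F(A_2,B_2)\|_{\{k\}}\big)
=\tfrac12\big\|F(A_1,B_1)^\uparrow+F(A_2,B_2)^\uparrow\big\|_{\{k\}},
$$
the last equality holding because $F(A_1,B_1)^\uparrow+F(A_2,B_2)^\uparrow$ is again increasingly ordered, so its $k$ smallest eigenvalues sum to $\|F(A_1,B_1)\|_{\{k\}}+\|F(A_2,B_2)\|_{\{k\}}$. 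As this holds for all $k$, \cite[Lemma 4.2]{BH1} upgrades it to every symmetric anti-norm, and superadditivity $\|X^\uparrow+Y^\uparrow\|_!\ge\|X\|_!+\|Y\|_!$ yields $(a)$. The implication $(b)'\Rightarrow(b)$ is the exact dual, using ordinary Ky Fan dominance and subadditivity of symmetric norms with $\downarrow$ in place of $\uparrow$.

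For $(a)\Rightarrow(b)$ I would apply $(a)$ to the derived anti-norm $\|X\|_!:=\|X^{-1}\|^{-1}$ attached to a given symmetric norm $\|\cdot\|$. Then $(A,B)\mapsto\|F(A,B)^{-1}\|^{-1}$ is jointly concave and strictly positive (here $F(A,B)\in\bP_l$ is invertible), and the elementary fact that a positive function whose reciprocal is concave is itself convex—the harmonic–arithmetic mean inequality applied along each segment—gives joint convexity of $\|F(A,B)^{-1}\|$, which is $(b)$.

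The substantive steps are $(a)\Rightarrow(c)$ and $(b)'\Rightarrow(d)$, which convert the ``for every $k$'' hypotheses into trace statements. For $(a)\Rightarrow(c)$ I would represent a non-decreasing concave $f$ on $(0,\infty)$ as $f(x)=a+bx+\int_{(0,\infty)}\min(x,t)\,d\mu(t)$ with $b\ge0$ and $\mu\ge0$, so that $\Tr f(C)=al+b\Tr C+\int\Tr\min(C,tI)\,d\mu(t)$; the crucial identity, obtained by ordering eigenvalues (with $\|C\|_{\{0\}}:=0$), is
$$
\Tr\min(C,tI)=\min_{0\le k\le l}\big(\|C\|_{\{k\}}+(l-k)t\big),\qquad C\in\bM_l^+.
$$
Each $\|F(A,B)\|_{\{k\}}+(l-k)t$ is jointly concave by $(a)$, a pointwise minimum of jointly concave functions is jointly concave, $\Tr C=\|C\|_{\{l\}}$ is covered too, and integrating a nonnegative combination preserves concavity, giving $(c)$. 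Dually, for $(b)'\Rightarrow(d)$ I would use $g(x)=a+\beta x+\int_{(0,\infty)}(x-t)_+\,d\nu(t)$ for non-decreasing convex $g$ together with
$$
\Tr(C-tI)_+=\max_{0\le k\le l}\big(\|C\|_{(k)}-kt\big),
$$
so that $\Tr g(F(A,B)^{-1})$ becomes a nonnegative combination of the jointly convex functions $\|F(A,B)^{-1}\|_{(k)}-kt$, hence jointly convex. Finally $(c)\Rightarrow(d)$ follows from the substitution $f(x):=-g(1/x)$: if $g$ is non-decreasing and convex then $f$ is non-decreasing and concave (since $x\mapsto1/x$ is convex decreasing and $g$ is convex non-decreasing), and $\Tr g(F(A,B)^{-1})=-\Tr f(F(A,B))$, so $(c)$ delivers joint convexity.

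The main obstacle I anticipate is twofold. First, the two variational identities for $\Tr\min(C,tI)$ and $\Tr(C-tI)_+$ are the heart of the matter, since they are exactly what let the Ky Fan quantities indexed by $k$ assemble into a single trace function through a minimum/maximum that respects concavity/convexity. Second, the integral representations require care at the endpoints: a non-decreasing concave $f$ may have $f(0^+)=-\infty$ (e.g.\ $\log$) and a non-decreasing convex $g$ may blow up at $\infty$, so $\mu$ or $\nu$ need not be finite. I would circumvent this by testing joint concavity/convexity on a fixed segment, on which the eigenvalues of $F$ (resp.\ $F^{-1}$) lie in a compact subinterval $[\delta,M]\subset(0,\infty)$, and replacing $f$ (resp.\ $g$) by its affine extension outside $[\delta,M]$; this leaves the relevant traces unchanged while rendering the representing measure finite.
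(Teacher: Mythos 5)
Your proposal is correct, and its overall architecture (reduction to Ky Fan (anti-)norms, the derived anti-norm $\|X\|_!=\|X^{-1}\|^{-1}$ for (a) $\Rightarrow$ (b), and the substitution $f(x)=-g(x^{-1})$ for (c) $\Rightarrow$ (d)) coincides with the paper's. The genuine difference lies in the two implications that convert norm statements into trace statements. The paper handles (a) $\Rightarrow$ (c) and (b) $\Rightarrow$ (d) by observing that joint concavity/convexity for all Ky Fan (anti-)norms is exactly a weak majorization statement, namely $(-\alpha_{l+1-i})_i\prec_w\bigl(-\tfrac{\alpha'_{l+1-i}+\alpha''_{l+1-i}}{2}\bigr)_i$ in the concave case, and then applies the standard fact that weak majorization is preserved under non-decreasing convex functions (applied to $-f(-x)$), followed by scalar concavity of $f$. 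You instead decompose $f$ via the integral representation $f(x)=a+bx+\int\min(x,t)\,d\mu(t)$ and use the variational identities $\Tr\min(C,tI)=\min_{0\le k\le l}\bigl(\|C\|_{\{k\}}+(l-k)t\bigr)$ and $\Tr(C-tI)_+=\max_{0\le k\le l}\bigl(\|C\|_{(k)}-kt\bigr)$, so that the trace function becomes a positive superposition of minima (resp.\ maxima) of jointly concave (resp.\ convex) functions. Both arguments are sound; they are essentially dual descriptions of the same phenomenon, since your variational identities encode precisely the extreme points of the weak majorization inequalities. The paper's route is shorter and avoids any integral representation (hence no issue with $f(0^+)=-\infty$ or infinite representing measures), while yours is more constructive and makes explicit how the family of Ky Fan quantities indexed by $k$ assembles into a general trace function; your localization to a compact spectral interval $[\delta,M]$ with affine extension correctly disposes of the finiteness issue you flag.
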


\begin{proof}
(a) $\Rightarrow$ (a)$'$ and (b) $\Rightarrow$ (b)$'$ are trivial. (a)$'$ $\Rightarrow$ (a)
follows from \cite[Lemma 4.2]{BH1} as in the proof of Theorem \ref{T-3.1}, and
(b)$'$ $\Rightarrow$ (b) is similar (see \cite[Proposition 4.4.13]{Hi2}).
(a) $\Rightarrow$ (b) follows from \cite[Propositions 4.6]{BH2}, as used in the last part
of the proof of Theorem \ref{T-3.1}.

(a) $\Rightarrow$ (c).\enspace
Let $A_1,A_2\in\bP_n$ and $B_1,B_2\in\bP_m$. Let $\alpha_1\ge\dots\ge\alpha_l$,
$\alpha_1'\ge\dots\ge\alpha_l'$ and $\alpha_1''\ge\dots\ge\alpha_l''$ be the eigenvalues
of $F((A_1+A_2)/2,(B_1+B_2)/2)$, $F(A_1,B_1)$ and $F(A_2,B_2)$, respectively, in
decreasing order with multiplicities. Joint concavity in (a) for the Ky Fan anti-norms
$\|\cdot\|_{\{k\}}$ means that
$$
\sum_{i=1}^k\alpha_{l+1-i}\ge\sum_{i=1}^k{\alpha_{l+1-i}'+\alpha_{l+1-i}''\over2},
\qquad1\le k\le l,
$$
that is, we have the weak majorization
$$
(-\alpha_{l+1-i})_{i=1}^l\prec_w
\biggl(-{\alpha_{l+1-i}'+\alpha_{l+1-i}''\over2}\biggr)_{i=1}^l.
$$
Now, assume that $f$ is an non-decreasing concave function on $(0,\infty)$. Since $-f(-x)$
is non-decreasing and convex on $(-\infty,0)$, we obtain
$$
-\sum_{i=1}^lf(\alpha_{l+1-i})\le
-\sum_{i=1}^lf\biggl({\alpha_{l+1-i}'+\alpha_{l+1-i}''\over2}\biggr)
$$
and hence
$$
\sum_{i=1}^lf(\alpha_i)\ge\sum_{i=1}^lf\biggl({\alpha_i'+\alpha_i''\over2}\biggr)
\ge\sum_{i=1}^l{f(\alpha_i')+f(\alpha_i'')\over2}
$$
thanks to concavity of $f$. This means that
$$
\Tr f\biggl(F\biggl({A_1+A_2\over2},{B_1+B_2\over2}\biggr)\biggr)\ge
{\Tr f(F(A_1,B_1))+\Tr f(F(A_2,B_2))\over2}.
$$

(b) $\Rightarrow$ (d).\enspace
Let $\alpha_i$, $\alpha_i'$ and $\alpha_i''$ be defined as above corresponding to $F(A,B)^{-1}$
instead of $F(A,B)$. Joint convexity in (b) for the Ky Fan norms $\|\cdot\|_{(k)}$ means the
weak majorization
$$
(\alpha_i)_{i=1}^l\prec_w\biggl({\alpha_i'+\alpha_i''\over2}\biggr)_{i=1}^l.
$$
If $f$ is non-decreasing and convex on $(0,\infty)$, then
$$
\sum_{i=1}^lf(\alpha_i)\le\sum_{i=1}^lf\biggl({\alpha_i'+\alpha_i''\over2}\biggr)
\le\sum_{i=1}^l{f(\alpha_i')+f(\alpha_i'')\over2}
$$
so that
$$
\Tr f\biggl(F\biggl({A_1+A_2\over2},{B_1+B_2\over2}\biggr)^{-1}\biggr)\le
{\Tr f(F(A_1,B_1)^{-1})+\Tr f(F(A_2,B_2)^{-1})\over2}.
$$

(c) $\Rightarrow$ (d) immediately follows from the fact that if $f$ is non-decreasing and
convex on $(0,\infty)$, then $-f(x^{-1})$ is non-decreasing and concave on $(0,\infty)$.
\end{proof}

\begin{cor}\label{C-4.2}
Let $\sigma$ be an operator mean and $f$ be a real function on $(0,\infty)$. Assume that either
$0\le p,q\le1$ or $-1\le p,q\le0$, and let $\gamma:=\max\{p,q\}$ if $p,q\ge0$ and
$\gamma:=\min\{p,q\}$ if $p,q\le0$. If $f(x^\gamma)$ is non-decreasing and concave on
$(0,\infty)$, then
\begin{equation}\label{F-4.1}
(A,B)\in\bP_n\times\bP_m\longmapsto\Tr f(\Phi(A^p)\,\sigma\,\Psi(B^q))
\end{equation}
is jointly concave. If $f(x^{-\gamma})$ is non-decreasing and convex on $(0,\infty)$, then
\eqref{F-4.1} is jointly convex.
\end{cor}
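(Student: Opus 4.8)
The plan is to apply the passage Theorem~\ref{T-4.1} to the single map
$F(A,B):=\{\Phi(A^p)\,\sigma\,\Psi(B^q)\}^{1/\gamma}$, which sends $\bP_n\times\bP_m$ into $\bP_l$ because an operator mean of positive definite matrices is positive definite and real powers preserve $\bP_l$. Once I know that $F$ satisfies condition~(a), the two assertions of the corollary will fall out of the implications (a)~$\Rightarrow$~(c) and (a)~$\Rightarrow$~(d) recorded in Theorem~\ref{T-4.1}, after I rewrite the integrand $f(\Phi(A^p)\,\sigma\,\Psi(B^q))$ in terms of $F$.

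First I would verify condition~(a). Consider the power function $f_0(x):=x^{1/\gamma}$, which is non-negative on $(0,\infty)$ and satisfies $f_0(x^\gamma)=x$, an operator monotone function. Since $\gamma$ is defined exactly as in Theorem~\ref{T-3.1} (namely $\max\{p,q\}$ when $p,q\ge0$ and $\min\{p,q\}$ when $p,q\le0$), Theorem~\ref{T-3.1} applies to $f_0$ and shows that $(A,B)\mapsto\|f_0(\Phi(A^p)\,\sigma\,\Psi(B^q))\|_!=\|F(A,B)\|_!$ is jointly concave for every symmetric anti-norm. This is precisely condition~(a), and the single citation covers both sign regimes at once.

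For the concavity assertion, suppose $f(x^\gamma)$ is non-decreasing and concave, and set $g(t):=f(t^\gamma)$. Then $g$ is non-decreasing and concave on $(0,\infty)$, so condition~(c) (which follows from (a) by Theorem~\ref{T-4.1}) gives joint concavity of $(A,B)\mapsto\Tr g(F(A,B))$. Because $F(A,B)^\gamma=\Phi(A^p)\,\sigma\,\Psi(B^q)$, the functional calculus identity $g(F(A,B))=f(F(A,B)^\gamma)=f(\Phi(A^p)\,\sigma\,\Psi(B^q))$ holds, so this is exactly joint concavity of \eqref{F-4.1}.

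For the convexity assertion, suppose $f(x^{-\gamma})$ is non-decreasing and convex, and set $\tilde f(t):=f(t^{-\gamma})$. Then $\tilde f$ is non-decreasing and convex, so condition~(d) (again a consequence of (a)) gives joint convexity of $(A,B)\mapsto\Tr\tilde f(F(A,B)^{-1})$. Since $F(A,B)^{-1}=\{\Phi(A^p)\,\sigma\,\Psi(B^q)\}^{-1/\gamma}$, one has $(F(A,B)^{-1})^{-\gamma}=\Phi(A^p)\,\sigma\,\Psi(B^q)$, whence $\tilde f(F(A,B)^{-1})=f(\Phi(A^p)\,\sigma\,\Psi(B^q))$, giving joint convexity of \eqref{F-4.1}. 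There is no serious obstacle here: the whole content is already packaged in Theorems~\ref{T-3.1} and \ref{T-4.1}, and the only thing to watch is the exponent bookkeeping in the functional calculus, including the case $\gamma<0$ where $t^{1/\gamma}$ and $t^{-\gamma}$ are negative powers but cause no difficulty on $(0,\infty)$.
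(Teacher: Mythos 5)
Your proposal is correct and is essentially identical to the paper's own argument: the paper likewise sets $F(A,B):=\{\Phi(A^p)\,\sigma\,\Psi(B^q)\}^{1/\gamma}$, invokes Theorem \ref{T-3.1} to verify condition (a), and then reads off both assertions from the implications (a) $\Rightarrow$ (c) and (a) $\Rightarrow$ (d) of Theorem \ref{T-4.1} after the same rewriting of the integrand. The exponent bookkeeping you spell out is exactly what the paper leaves implicit.
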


Indeed, Theorem \ref{T-3.1} (also \cite[Theorem 3.2]{Hi3}) implies that the function
$F(A,B):=\{\Phi(A^p)\,\sigma\,\Psi(B^q)\}^{1/\gamma}$ for  $(A,B)\in\bP_n\times\bP_m$ satisfies
condition (a) above, so by Theorem \ref{T-4.1} we have the assertions by rewriting conditions
(c) and (d).

\begin{remark}\rm
When $f$ is non-decreasing and concave on $(0,\infty)$, it is straightforward to see that
the function $\Tr f(\Phi(A^p)\,\sigma\,\Psi(B^q))$ is jointly concave in $(A,B)$ when
$0\le p,q\le1$. Indeed, one has
\begin{align*}
\Phi\biggl(\biggl({A_1+A_2\over2}\biggr)^p\biggr)\,\sigma\,
\Psi\biggl(\biggl({B_1+B_2\over2}\biggr)^p\biggr)
&\ge\biggl({\Phi(A_1^p)+\Phi(A_2^p)\over2}\biggr)\,\sigma\,
\biggl({\Psi(B_1^q)+\Psi(B_2^q)\over2}\biggr) \\
&\ge{\Phi(A_1^p)\,\sigma\,\Psi(B_1^q)+\Phi(A_2^p)\,\sigma\,\Psi(B_2^q)\over2}
\end{align*}
thanks to joint concavity of $\sigma$. Since $\Tr f(\cdot)$ is monotone and concave on $\bP_l$,
we have the conclusion. The real merit of Corollary \ref{C-4.2} is that it holds under the
weaker assumption of $f(x^\gamma)$ being concave.
\end{remark}

\begin{remark}\rm
The assumptions on $p,q$ and $f$ for the joint concavity assertion in Corollary \ref{C-4.2} are
considered optimal from the following facts:
\begin{itemize}
\item Let $p,s\ne0$. If $A\in\bP_2\mapsto\Tr(X^*A^pX)^s$ is concave for any invertible
$X\in\bM_2$, then either $0<p\le1$ and $0<s\le1/p$, or $-1\le p\le0$ and $1/p\le s<0$ (see
\cite[Proposition 5.1\,(1)]{Hi3}).
\item For the case where $p=q=1$ and $\sigma$ is the geometric mean, the numerical function
$f(x^{1/2}y^{1/2})$ must be jointly concave in $x,y>0$, which implies that $f$ is non-decreasing
and concave.
\item Let $p,q\ge0$ and $\gamma:=\max\{p,q\}$. For the case where $\sigma$ is the arithmetic
mean, the numerical function $f(x^p+y^q)$ must be jointly concave in $x,y>0$, which implies
that $f(x^\gamma)$ is concave.
\end{itemize}
\end{remark}

The next corollary gives concavity/convexity of one-variable trace functions of Epstein type.
The first assertion (1) will repeatedly be used in the next section.

\begin{cor}\label{C-4.5}
Let $\Phi:\bM_n\to\bM_l$ be a strictly positive linear map.
\begin{itemize}
\item[\rm(1)] If $0<p\le1$ and $f$ is a non-decreasing concave function on $(0,\infty)$, then
$$
A\in\bP_n\longmapsto\Tr f\bigl(\Phi(A^p)^{1/p}\bigr)\quad
\mbox{and}\quad\Tr f\bigl(\Phi(A^{-p})^{-1/p}\bigr)
$$
are concave.
\item[\rm(2)] Assume that $\Phi$ is CP (i.e., completely positive). If $1\le p\le2$ and $f$ is
a non-decreasing convex function on $(0,\infty)$, then
$$
A\in\bP_n\longmapsto\Tr f\bigl(\Phi(A^p)^{1/p}\bigr)
$$
is convex.
\end{itemize}
\end{cor}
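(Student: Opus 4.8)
The plan is to handle the two parts by quite different means: (1) is a direct specialization of Corollary \ref{C-4.2}, whereas (2) lies outside the parameter range $0\le p\le1$ of that corollary and genuinely exploits complete positivity, so it must be reduced to a norm-convexity statement proved by the Epstein-type method of Section 2.

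For (1), I would apply Corollary \ref{C-4.2} in the diagonal situation $m=n$, $B=A$, $\Psi=\Phi$, $q=p$, using the Kubo--Ando normalization $X\,\sigma\,X=X$ that every operator mean enjoys; then $\Phi(A^p)\,\sigma\,\Psi(A^q)=\Phi(A^p)$, and feeding the function $g(x):=f(x^{1/p})$ into Corollary \ref{C-4.2} turns $\Tr g(\Phi(A^p))$ into $\Tr f(\Phi(A^p)^{1/p})$. With $\gamma=\max\{p,p\}=p$ the hypothesis of Corollary \ref{C-4.2} becomes ``$g(x^{p})=f(x)$ non-decreasing and concave'', which is exactly the assumption on $f$, giving concavity of the first function. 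For the second function I would run the same argument with the exponent $-p\in[-1,0)$ in place of $p$ (so $\gamma=\min\{-p,-p\}=-p$) and $g(x):=f(x^{-1/p})$, for which $g(x^{-p})=f(x)$ is again non-decreasing concave. These steps are routine once the substitutions are written out.

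Assertion (2) is the substantive one. I would route it through Theorem \ref{T-4.1}, the implication ${\rm(b)}\Rightarrow{\rm(d)}$, applied to the single-variable map $F(A):=\Phi(A^p)^{-1/p}$ (viewed as independent of the $B$-variable, so that joint convexity reduces to convexity in $A$). Since $F(A)^{-1}=\Phi(A^p)^{1/p}$, condition (d) for $F$ says exactly that $\Tr f(\Phi(A^p)^{1/p})$ is convex for every non-decreasing convex $f$, which is the desired conclusion. By Theorem \ref{T-4.1} it therefore suffices to verify condition (b) for $F$, namely that $A\in\bP_n\mapsto\|\Phi(A^p)^{1/p}\|$ is convex for every symmetric norm $\|\cdot\|$; call this property $(\star)$. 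To prove $(\star)$ I would first reduce to a single Kraus operator: writing the CP map as $\Phi(X)=\sum_iK_i^*XK_i$ and putting $\tilde A:=I_N\otimes A$ and $V\xi:=(K_1\xi,\dots,K_N\xi)$, one gets $\Phi(A^p)=V^*\tilde A^pV$, and as $A\mapsto\tilde A$ is linear it is enough to prove convexity of $B\mapsto\|(V^*B^pV)^{1/p}\|$ on all of $\bP_{Nn}$. Next, exactly as in the proofs of Lemma \ref{L-3.3} and Theorem \ref{T-3.1}, I would pass to Ky Fan norms and use the compression trick (replacing $V^*(\cdot)V$ by $(E+\eps I_l)V^*(\cdot)V(E+\eps I_l)$ for a suitable spectral projection $E$) so as to transfer $(\star)$ to the trace functional $\Tr(V^*B^pV)^{1/p}$.

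The main obstacle is the remaining core fact: convexity of $B\mapsto\Tr(V^*B^pV)^{1/p}$ for $1\le p\le2$, which is the convex counterpart of Epstein's concavity theorem for $0<p\le1$. I would prove it by the refined Pick-function/resolvent argument of Section 2, noting that the sign of the decisive second derivative is reversed precisely because $t\mapsto t^p$ is now operator convex on $(0,\infty)$. Both hypotheses are essential and block any shortcut: for merely positive (non-CP) $\Phi$, or for $p$ outside $[1,2]$, the statement $(\star)$ fails, so neither Corollary \ref{C-4.2} nor the convex half of Corollary \ref{C-3.2} applies, and the delicate point is to run the complex-analytic estimate in exactly the regime where operator convexity of $t\mapsto t^p$ furnishes the correct sign.
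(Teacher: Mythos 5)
Your proposal is correct and follows essentially the same route as the paper: part (1) is exactly the specialization of Corollary \ref{C-4.2} to $B=A$, $\Psi=\Phi$, $q=p$ (with the exponent $-p$ and the substitution $g(x)=f(x^{-1/p})$ for the second function), and part (2) is exactly the one-variable version of Theorem \ref{T-4.1}, implication (b) $\Rightarrow$ (d), applied to $F(A)=\Phi(A^p)^{-1/p}$. The only divergence is that the paper simply quotes the required norm convexity of $A\mapsto\|\Phi(A^p)^{1/p}\|$ from \cite[Theorem 4.2]{Hi3}, whereas you sketch a re-derivation of it; your reduction to that fact is the right move, but your justification of the core analytic step (that the Epstein-type second-derivative estimate ``has its sign reversed because $t\mapsto t^p$ is operator convex for $1\le p\le2$'') is an assertion rather than an argument, and in the cited source the convexity for $1\le p\le2$ requires a genuinely different complex-analytic set-up, not a mere sign flip in the concavity proof.
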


Indeed, (1) is specialization of Corollary \ref{C-4.2} to the case where $B=A$, $\Psi=\Phi$ and
$q=p$. Moreover, it is obvious that Theorem \ref{T-4.1} holds for a one-variable function
$F:\bP_n\to\bP_l$ as well. Applying this to \cite[Theorem 4.2]{Hi3} gives (2).

In particular, Corollary \ref{C-4.5} covers the result in \cite[Theorem 1.1]{CL1} that for
every $X\in\bM_n$ the function $A\in\bM_n^+\mapsto\Tr(X^*A^pX)^{q/p}$ is concave if $0<p\le1$
and $0\le q\le1$, and is convex if $1\le p\le2$ and $q\ge1$.

\begin{remark}\label{R-4.6}\rm
Compared the above (2) with (1) it might be expected that, under the same assumption of (3),
the function $A\in\bP_n\to\Tr f(\Phi(A^{-p})^{-1/p})$ is convex for $1\le p\le2$. In particular,
when $\Phi=K\cdot K^*:\bM_n\to\bM_n$ with an invertible $K\in\bM_n$, this is certainly true
since $\Phi(A^{-p})^{-1/p}=(KA^{-p}K^*)^{-1/p}=(K^{*-1}A^pK^{-1})^{1/p}$. However, it is not
true when $\Phi:\bM_n\to\bM_l$ is a general CP map. For instance, let $E$ be an orthogonal
projection in $\bM_n$, and let $\Phi:\bM_n\to E\bM_nE$ ($\cong\bM_l$ where $l:=\dim E$) be
defined by $\Phi(X)=EXE$ for $X\in\bM_n$. Then the assertion applied to $f(x)=x^s$ for $s\ge1$
would imply that $A\in\bP_n\mapsto\Tr\Phi(A^{-p})^{-s/p}$ is convex for $1\le p\le2$.
For example, let $n=2$, $E=\begin{bmatrix}1/2&1/2\\1/2&1/2\end{bmatrix}$,
$A_1=\begin{bmatrix}1&0\\0&t\end{bmatrix}$ and $A_2=\begin{bmatrix}t&0\\0&1\end{bmatrix}$ for
$t>0$. We then compute
$$
\Tr\Phi\biggl(\biggl({A_1+A_2\over2}\biggr)^{-p}\biggr)^{-s/p}=\biggl({1+t\over2}\biggr)^s,
$$
$$
\Tr\Phi(A_1^{-p})^{-s/p}=\Tr\Phi(A_2^{-p})^{-s/p}=\biggl({1+t^{-p}\over2}\biggr)^{-s/p}.
$$
For any $p,s>0$, since $(1+t)/2>((1+t^{-p})/2)^{-1/p}$ for $t\ne1$, we see that
$A\in\bP_2\mapsto\Tr\Phi(A^{-p})^{-s/p}$ is not convex.
\end{remark}

\section{More general trace functions of Lieb type}

In this section we are concerned with joint concavity/convexity of the functions
\begin{align}
(A,B)\in\bP_n\times\bP_m&\longmapsto
\Tr f(\Phi(A^p)^{1/2}\Psi(B^q)\Phi(A^p)^{1/2}), \label{F-5.1}\\
(A,B)\in\bP_n\times\bP_m&\longmapsto
\Tr f\bigl((\Phi(A^{-p})^{1/2}\Psi(B^{-q})\Phi(A^{-p})^{1/2})^{-1}\bigr). \label{F-5.2}
\end{align}
The form \eqref{F-5.2} is the rewriting of \eqref{F-5.1} by replacing $p,q,f$ with
$-p,-q,f(x^{-1})$.  The form \eqref{F-5.1} of trace functions was already treated in Section 2
but we here consider its joint concavity/convexity problem for more varieties of functions $f$
on $(0,\infty)$ and of real parameters $p,q$. Our strategy here is to extend the method adopted
in \cite[Section 4]{CFL}. To do this, we have to prepare some technical results on variational
formulas of trace functions, which we will summarize in Appendix A.

We first give a lemma which will be useful in the proofs of the theorems below.

\begin{lemma}\label{L-5.1}
Assume that $-1\le q\le0$. Then:
\begin{itemize}
\item[\rm(a)] The function $B\in\bP_m\mapsto\Psi(B^q)^{-1}$ is operator concave, and
$B\in\bP_m\mapsto\Psi(B^{-q})^{-1}$ is operator convex. Hence, if $f$ is an non-decreasing and
concave (resp., convex) function on $(0,\infty)$, then $\Tr f\bigl(\Psi(B^q)^{-1}\bigr)$ (resp.,
$\Tr f\bigl(\Psi(B^{-q})^{-1}\bigr)$ is concave (resp., convex) in $B\in\bP_m$.
\item[\rm(b)] The functions
$$
(X,B)\in\bM_l\times\bP_m\longmapsto X^*\Psi(B^q)X\quad\mbox{and}\quad
X^*\Psi(B^{-q})^{-1}X
$$
are jointly operator convex. Hence, if $f$ is a non-decreasing and convex function on
$(0,\infty)$, then $\Tr f(X^*\Psi(B^q)X)$ and $\Tr X^*\Psi(B^{-q})^{-1}X$ are jointly convex in
$(X,B)\in\bM_l\times\bM_m$.
\end{itemize}
\end{lemma}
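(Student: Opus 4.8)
The plan is to establish Lemma \ref{L-5.1} by reducing everything to two well-known operator-convexity facts about positive linear maps and operator means, then pushing those facts through the trace via the standard ``monotone-convex composition'' argument.

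For part (a), I would start from the fact that $t\mapsto t^q$ is operator concave on $(0,\infty)$ when $-1\le q\le0$ (indeed $0\le -q\le 1$, and $t\mapsto t^r$ is operator concave for $0\le r\le 1$, hence $t^q=t^{-(-q)}$... more directly, $t^q$ with $q\in[-1,0]$ is operator convex, so I must be careful with signs). The cleaner route: the map $B\mapsto B^q$ is operator convex on $\bP_m$ for $q\in[-1,0]$ since $t^q$ is operator convex there, and composing with the positive linear $\Psi$ preserves this, giving $B\mapsto\Psi(B^q)$ operator convex. Then I invoke the fact that $C\mapsto C^{-1}$ is operator convex and operator monotone \emph{decreasing} on $\bP_l$; the composition of the operator-monotone-decreasing operator-convex inversion with the operator-convex map $\Psi(B^q)$ yields operator concavity of $B\mapsto\Psi(B^q)^{-1}$, as claimed. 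For the second statement, $0\le-q\le1$ makes $B\mapsto B^{-q}$ operator concave, hence $\Psi(B^{-q})$ operator concave; composing with inversion (operator convex and monotone decreasing) turns concavity into convexity, giving $B\mapsto\Psi(B^{-q})^{-1}$ operator convex. The trace consequences then follow from the general principle (used already in the proof of Theorem \ref{T-4.1}) that if $G$ is operator concave (resp.\ convex) and $f$ is non-decreasing and scalar-concave (resp.\ convex), then $\Tr f(G(\cdot))$ is concave (resp.\ convex); I would cite this composition fact rather than reprove it.

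For part (b), the target is joint operator convexity of $(X,B)\mapsto X^*\Psi(B^q)X$ and $(X,B)\mapsto X^*\Psi(B^{-q})^{-1}X$. The key structural tool is the classical result that for a fixed operator-convex map $G\ge0$, the sandwiched map $(X,B)\mapsto X^*G(B)X$ is \emph{jointly} operator convex in $(X,B)$ when $G$ is operator convex; this is a standard two-variable operator convexity lemma (a perspective/Lieb-type argument, as in the Kubo--Ando and Effros frameworks). Since part (a)'s intermediate steps already give that $B\mapsto\Psi(B^q)$ is operator convex and $B\mapsto\Psi(B^{-q})^{-1}$ is operator convex, I would feed each of these into the joint-convexity lemma to conclude joint operator convexity of the two sandwiched expressions. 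The trace version then follows again by composing with a non-decreasing convex scalar $f$.

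The main obstacle I anticipate is the joint (two-variable) operator convexity in part (b): single-variable operator convexity of $B\mapsto\Psi(B^q)$ is routine, but upgrading $X^*\Psi(B^q)X$ to \emph{joint} convexity in the pair $(X,B)$ is the genuinely nontrivial step. The standard way around it is to write $X^*G(B)X$ as a perspective-type or block-matrix expression and exploit that $(X,B)\mapsto X^*G(B)X$ is the composition of the jointly convex map and congruence; concretely, one reduces to the known fact that for operator convex $G$ with $G(0)\le 0$ (or after a shift), the map $(X,B)\mapsto X^*G(B)X$ is jointly convex by a standard averaging/congruence inequality. I would locate the precise reference for this joint-convexity statement (it appears in the operator-convexity literature and is implicitly used in \cite{Li,An}) and cite it, since reproving it from scratch would be the only laborious part; everything else is sign-bookkeeping on the exponent range $-1\le q\le0$.
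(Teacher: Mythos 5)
Your proposal contains two genuine gaps, both coming from composition/joint-convexity ``principles'' that are not actually true.

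First, in part (a) your derivation of operator concavity of $B\mapsto\Psi(B^q)^{-1}$ does not work. You compose the operator convex map $B\mapsto\Psi(B^q)$ with inversion, which is operator monotone decreasing and operator \emph{convex}. To conclude that $\phi\circ G$ is operator concave from $G$ operator convex and $\phi$ decreasing, you would need $\phi$ to be operator \emph{concave}: the chain is $\phi(G(\tfrac{A+B}{2}))\ge\phi(\tfrac{G(A)+G(B)}{2})$ by antitonicity, and then you need $\phi(\tfrac{Y_1+Y_2}{2})\ge\tfrac{\phi(Y_1)+\phi(Y_2)}{2}$, which for $\phi(t)=t^{-1}$ is exactly the \emph{wrong} direction of the AM--HM inequality. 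No general principle of the form ``$G$ operator convex and positive $\Rightarrow G^{-1}$ operator concave'' can hold: already for scalars, $g(x)=x^2+1$ is convex but $1/g$ is not concave on $(0,\infty)$. The statement you need is true but nontrivial; the paper simply cites \cite[Lemma 3.4]{Hi3} for it. A self-contained route is to first establish operator concavity of $B\mapsto\Psi(B^{-1})^{-1}$ (a Schur-complement/parallel-sum type fact) and then compose with the operator monotone, operator concave map $B\mapsto B^{-q}$, $0\le -q\le1$. Your argument for the second map in (a), namely $\Psi(B^{-q})$ operator concave followed by the decreasing operator convex inversion, is correct, as are the trace consequences via monotonicity and concavity/convexity of $\Tr f(\cdot)$.

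Second, the ``classical result'' you intend to cite in part (b) --- that $(X,B)\mapsto X^*G(B)X$ is jointly operator convex whenever $G$ is operator convex (with $G(0)\le0$ or after a shift) --- is false, so the reference you plan to locate does not exist. Take $G(B)=B^2$: it is operator convex with $G(0)=0$, yet already the scalar function $(x,b)\mapsto b^2x^2$ has Hessian determinant $-12b^2x^2<0$ and is not jointly convex. The only genuine theorem of this shape is the Lieb--Ruskai joint operator convexity of $(X,Y)\mapsto X^*Y^{-1}X$ \cite[Theorem 1]{LR}, and the paper's proof of (b) is built precisely so as to reduce to it: one writes the inner map as the inverse of something operator \emph{concave} in $B$ (namely $\Psi(B^q)^{-1}$ by part (a), resp.\ $\Psi(B^{-q})$ by operator concavity of $t^{-q}$), bounds $\Psi((\tfrac{B_1+B_2}{2})^q)$ from above by $\bigl(\tfrac{Y_1+Y_2}{2}\bigr)^{-1}$ with $Y_i$ the concave inner values, and then applies Lieb--Ruskai to the congruence by $\tfrac{X_1+X_2}{2}$. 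Without routing through part (a) and the inverse structure, the step you yourself identify as the nontrivial one cannot be closed.
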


\begin{proof}
(a)\enspace
Operator concavity of $B\in\bP_m\mapsto\Psi(B^q)^{-1}$ is \cite[Lemma 3.4]{Hi3}, and operator
convexity of $\Psi(B^{-q})^{-1}$ is similar, so we omit the proof. The latter assertion is
immediately seen from monotonicity and concavity/convexity of $\Tr f(\cdot)$ on $\bP_l$. (Note
that the concavity assertion for $\Tr f\bigl(\Psi(B^q)^{-1}\bigr)$ is also an immediate
consequence of Corollary \ref{C-4.5}\,(1).)

(b)\enspace
First, recall a well-known fact \cite[Theorem 1]{LR} that the function
$(X,Y)\in\bM_l\times\bP_l\mapsto X^*Y^{-1}X$ is jointly operator convex. Let $X_1,X_2\in\bM_n$
and $B_1,B_2\in\bP_m$. Since $B\in\bP_m\mapsto\Psi(B^q)^{-1}$ is operator concave by (a), we
have
$$
\Psi\biggl(\biggl({B_1+B_2\over2}\biggr)^q\biggr)
\le\biggl({\Psi(B_1^q)^{-1}+\Psi(B_2^q)^{-1}\over2}\biggr)^{-1}
$$
and hence
\begin{align*}
&\biggl({X_1+X_2\over2}\biggr)^*\Psi\biggl(\biggl({B_1+B_2\over2}\biggr)^q\biggr)
\biggl({X_1+X_2\over2}\biggr) \\
&\qquad\le\biggl({X_1+X_2\over2}\biggr)^*
\biggl({\Psi(B_1^q)^{-1}+\Psi(B_2^q)^{-1}\over2}\biggr)^{-1}
\biggl({X_1+X_2\over2}\biggr) \\
&\qquad\le{X_1^*\Psi(B_1^q)X_1+X_2^*\Psi(B_2^q)X_2\over2}
\end{align*}
thanks to joint operator convexity mentioned above. For the latter function, since
$((B_1+B_2)/2)^{-q}\ge(B_1^{-q}+B_2^{-q})/2$, we have
$$
\Psi\biggl(\biggl({B_1+B_2\over2}\biggr)^{-q}\biggr)^{-1}
\le\biggl({\Psi(B_1^{-q})+\Psi(B_2^{-q})\over2}\biggr)^{-1}
$$
and thus the assertion follows as above. The latter assertion is immediate as in (a).
\end{proof}

The next theorem gives a sufficient condition for \eqref{F-5.1} and \eqref{F-5.2} to be
jointly concave.

\begin{thm}\label{T-5.2}
Let $f$ be a non-decreasing (resp., non-increasing) function on $(0,\infty)$ and $0\le p,q\le1$.
If either $f(x^{1+p})$ or $f(x^{1+q})$ is concave (resp. convex) on $(0,\infty)$, then the
functions \eqref{F-5.1} and \eqref{F-5.2} are jointly concave (resp., jointly convex).
\end{thm}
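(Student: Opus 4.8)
The plan is to obtain the convexity statement from the concavity one for free, and to treat \eqref{F-5.1} and \eqref{F-5.2} in parallel. If $f$ is non-increasing and $f(x^{1+p})$ is convex, then $-f$ is non-decreasing and $-f(x^{1+p})$ is concave, so it suffices to prove joint concavity for the non-decreasing/concave hypothesis and read off joint convexity by passing to $-f$. For \eqref{F-5.2} I would write the inner matrix as $\Phi(A^{-p})^{-1/2}\Psi(B^{-q})^{-1}\Phi(A^{-p})^{-1/2}$; Lemma \ref{L-5.1}(a) then shows $A\mapsto\Phi(A^{-p})^{-1}$ and $B\mapsto\Psi(B^{-q})^{-1}$ are operator concave, exactly as $A\mapsto\Phi(A^p)$ and $B\mapsto\Psi(B^q)$ are for \eqref{F-5.1}. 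Thus both land on the single template $\Tr g\bigl((P(A)^{1/2}Q(B)P(A)^{1/2})^{\theta}\bigr)$ with $g(x):=f(x^{1+p})$ non-decreasing concave and $\theta:=1/(1+p)$, where $P,Q$ are operator concave of effective powers $p,q$. Since the eigenvalues of $P^{1/2}QP^{1/2}$ agree with those of $Q^{1/2}PQ^{1/2}$, I may use whichever of $f(x^{1+p})$, $f(x^{1+q})$ is the concave one to place the matching variable in the outer (square-root) slot; assume it is $f(x^{1+p})$.

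The core is joint concavity of $(A,B)\mapsto\Tr g(W^{\theta})$ with $W=\Phi(A^p)^{1/2}\Psi(B^q)\Phi(A^p)^{1/2}$. First I would record the one-variable input. Writing $\Xi_B(X):=\Psi(B^q)^{1/2}\Phi(X)\Psi(B^q)^{1/2}$, which is a strictly positive linear map, the eigenvalues of $W$ coincide with those of $\Xi_B(A^p)$, and with $\tilde g(x):=g(x^{p/(1+p)})$ (again non-decreasing concave) one has $\Tr g(W^{\theta})=\Tr\tilde g\bigl(\Xi_B(A^p)^{1/p}\bigr)$. Corollary \ref{C-4.5}(1) applied to $\Xi_B$ then gives concavity in $A$ for each fixed $B$, and symmetrically in $B$ for fixed $A$; for \eqref{F-5.2} the companion assertions of Corollary \ref{C-4.5}(1) and of Lemma \ref{L-5.1}(a) play the same role. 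The exponent $1+p$ is precisely what makes this fit: it splits as $1+p$, the summand $p$ feeding $\Phi(A^p)^{1/p}$ in Corollary \ref{C-4.5}(1) and the summand $1$ being the linear slot occupied by $\Psi(B^q)$.

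To pass from separate to \emph{joint} concavity I would invoke the variational (Legendre/conjugate) formula for non-decreasing concave functions developed in Appendix A, in the manner of \cite[Section 4]{CFL}. The aim is to represent $\Tr g(W^{\theta})$ as an infimum, over an auxiliary positive-definite matrix, of functionals that for each fixed value of the auxiliary variable decouple into a term depending on $A$ alone—handled by Corollary \ref{C-4.5}(1)—and a term depending on $B$ alone—handled by operator concavity of $B\mapsto\Psi(B^q)$ (resp. $B\mapsto\Psi(B^{-q})^{-1}$ via Lemma \ref{L-5.1}(a)) together with monotonicity and concavity of $\Tr(\cdot)$ on $\bP_l$; an infimum of jointly concave functions is jointly concave, which closes the argument. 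I expect the main obstacle to be exactly the construction and use of this formula. A naive Legendre linearization of the outer $g$ reproduces the congruence $\Phi(A^p)^{1/2}(\cdot)\Phi(A^p)^{1/2}$—equivalently an untamed weighted trace $\Tr(TW^{\theta})$—which is not jointly concave, and the point of the power $\theta=1/(1+p)$ is to let the auxiliary variable absorb exactly one unit of exponent, so that the residual $A$-dependence is the concavity-preserving $\Phi(A^p)^{1/p}$ of Corollary \ref{C-4.5}(1) while the residual $B$-dependence remains operator concave. Arranging this bookkeeping so that no congruence or weighted-trace term survives is where the technical weight sits, and is precisely why the conjugate-function formulas of Appendix A are required.
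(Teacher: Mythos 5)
Your high-level strategy matches the paper's: reduce convexity to concavity via $-f$, handle \eqref{F-5.2} by rewriting the inner matrix as $\Phi(A^{-p})^{-1/2}\Psi(B^{-q})^{-1}\Phi(A^{-p})^{-1/2}$ and invoking Lemma \ref{L-5.1}\,(a), and obtain joint concavity from a conjugate-function representation as an infimum of functionals that split into an $A$-part (fed to Corollary \ref{C-4.5}\,(1)) and a $B$-part. But the one step you defer---``arranging this bookkeeping so that no congruence or weighted-trace term survives''---is precisely the content of the proof, and it is missing. Worse, the setup you sketch (Legendre-linearizing the outer $g$ at $W^{\theta}$ with $\theta=1/(1+p)$) is exactly the version you yourself identify as producing an untamed term $\Tr(TW^{\theta})$; the separate concavity in $A$ and in $B$ that you do establish via $\Xi_B$ does not imply joint concavity, and nothing in your write-up closes this gap.

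The actual mechanism is more elementary than what you anticipate: apply Lemma \ref{L-A.2}\,(b) to $f$ at $W=\Phi(A^p)^{1/2}\Psi(B^q)\Phi(A^p)^{1/2}$ itself (no power $\theta$), giving $\Tr f(W)=\inf_{Y\in\bP_l}\bigl\{\Tr YW-\Tr\check f(Y)\bigr\}$, and then substitute $X:=(\Phi(A^p)^{1/2}Y\Phi(A^p)^{1/2})^{1/2}$. This reparametrization is the whole trick: it turns the linear term into $\Tr X\Psi(B^q)X$, which depends on $B$ only (and is concave in $B$ since $0<q\le1$), while pushing all the $A$-dependence into $-\Tr\check f(X\Phi(A^p)^{-1}X)=-\Tr\check f\bigl(((X^{-1}\Phi(A^p)X^{-1})^{1/p})^{-p}\bigr)$. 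The hypothesis that $f(x^{1+p})$ is concave is then used exactly once, through Lemma \ref{L-A.2}\,(c) with $r=p$: it guarantees $f\in\cF_\concave^\nearrow(0,\infty)$ (so the variational formula applies at all) and that $-\check f(x^{-p})$ is non-decreasing and concave, so Corollary \ref{C-4.5}\,(1) gives concavity in $A$ of the second term for each fixed $X$. The infimum over $X$ of these jointly concave functions is jointly concave, which finishes the argument. Without the change of variables $Y\mapsto X$ and the identification of $-\check f(x^{-p})$ as the function to which Corollary \ref{C-4.5}\,(1) must be applied, your proposal remains an outline rather than a proof.
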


\begin{proof}
The convexity assertion follows by applying the concavity one to $-f$. So we may confine the
proof to the concavity assertion. When $p=0$ and $0\le q\le1$, the assertion reduces to
concavity of $B\in\bP_m\mapsto\Tr f(\Phi(I)^{1/2}\Psi(B^q)\Phi(I)^{1/2})$ and
$\Tr f\bigl((\Phi(I)^{1/2}\Psi(B^{-q})\Phi(I)^{1/2})^{-1}\bigr)$. This immediately follows from
operator concavity of $x^q$ (for the former) and from Lemma \ref{L-5.1}\,(a) (for the latter).
The situation is similar when $0\le p\le1$ and $q=0$. So we assume that $0<p,q\le1$ and
$f(x^{1+p})$ is concave on $(0,\infty)$. For every $A\in\bP_n$ and $B\in\bP_m$, by (b) and (c)
of Lemma \ref{L-A.2} with $r=p$ we have
\begin{align*}
\Tr f(\Phi(A^p)^{1/2}\Psi(B^q)\Phi(A^p)^{1/2})
&=\inf_{Y\in\bP_l}\bigl\{\Tr Y\Phi(A^p)^{1/2}\Psi(B^q)\Phi(A^p)^{1/2}
-\Tr\check f(Y)\bigr\} \\
&=\inf_{Y\in\bP_l}\bigl\{\Tr\Phi(A^p)^{1/2}Y\Phi(A^p)^{1/2}\Psi(B^q)
-\Tr\check f(Y)\bigr\}.
\end{align*}
Let $X:=(\Phi(A^p)^{1/2}Y\Phi(A^p)^{1/2})^{1/2}$ and so $Y=\Phi(A^p)^{-1/2}X^2\Phi(A^p)^{-1/2}$;
thus $X$ runs over all $\bP_l$ as $Y$ does. Therefore,
\begin{align}
\Tr f(\Phi(A^p)^{1/2}\Psi(B^q)\Phi(A^p)^{1/2})
&=\inf_{X\in\bP_l}\bigl\{\Tr X^2\Psi(B^q)
-\Tr\check f\bigl(\Phi(A^p)^{-1/2}X^2\Phi(A^p)^{-1/2}\bigr)\bigr\} \nonumber\\
&=\inf_{X\in\bP_l}\bigl\{\Tr X\Psi(B^q)X
-\Tr\check f(X\Phi(A^p)^{-1}X)\bigr\}. \label{F-5.3}
\end{align}
Furthermore, we write
$$
\Tr\check f(X\Phi(A^p)^{-1}X)
=\Tr\check f\bigl(\bigl((X^{-1}\Phi(A^p)X^{-1})^{1/p}\bigr)^{-p}\bigr).
$$
For any fixed $X\in\bP_l$, since $-\check f(x^{-p})$ is non-decreasing and concave on
$(0,\infty)$ by Lemma \ref{L-A.2}\,(c) with $r=p$, it follows from Corollary \ref{C-4.5}\,(1)
that
$$
A\in\bP_n\longmapsto-\Tr\check f(X\Phi(A^p)^{-1}X)
$$
is concave. Since $B\in\bP_m\mapsto\Tr X\Psi(B^q)X$ is concave, we have joint concavity of
\eqref{F-5.1}.

For the function \eqref{F-5.2} we may replace \eqref{F-5.3} with
\begin{align*}
\Tr f\bigl((\Phi(A^{-p})^{1/2}\Psi(B^{-q})\Phi(A^{-p})^{1/2})^{-1}\bigr)
&=\Tr f\bigl(\Phi(A^{-p})^{-1/2}\Psi(B^{-q})^{-1}\Phi(A^{-p})^{-1/2}\bigr) \\
&=\inf_{X\in\bP_l}\bigl\{\Tr X\Psi(B^{-q})^{-1}X
-\Tr\check f(X\Phi(A^{-p})X)\bigr\}.
\end{align*}
For any fixed $X\in\bP_l$, from Corollary \ref{C-4.5}\,(1),
$$
B\in\bP_m\longmapsto\Tr X\Psi(B^{-q})^{-1}X
=\Tr\bigl((X^{-1}\Psi(B^{-q})X^{-1})^{-1/q}\bigr)^q
$$
and
$$
A\in\bP_n\longmapsto-\Tr\check f(X\Phi(A^{-p})X)
=-\Tr\check f\bigl(\bigl((X\Phi(A^{-p})X)^{-1/p}\bigr)^{-p}\bigr)
$$
are concave so that \eqref{F-5.2} is jointly concave.
\end{proof}

For the power functions $f(x)=x^s$ the range of $(p,q,s)$ for joint concavity of \eqref{F-2.4}
covered by Theorem \ref{T-5.2} is the following: $0\le p,q\le1$ and
$0\le s\le\max\{1/(1+p),1/(1+q)\}$, or $-1\le p,q\le0$ and $-\max\{1/(1-p),1/(1-q)\}\le s\le0$,
which is smaller than the best possible range covered by Theorem \ref{T-2.1} (see the paragraph
containing \eqref{F-2.4}). However, Theorem \ref{T-5.2} gains an advantage that it is
applicable to a wider class of functions $f$, as demonstrated in Example \ref{E-A.4}. On the
other hand, the range of $(p,q,s)$ for joint convexity of \eqref{F-2.4} covered by Theorem
\ref{T-5.2} is: $0\le p,q\le1$ and $s\le0$, or $-1\le p,q\le0$ and $s\ge0$, which includes the
range by Theorem \ref{T-2.1}.

The rest of the section is devoted to more results on joint convexity of \eqref{F-5.1} and
\eqref{F-5.2}.

\begin{thm}\label{T-5.3}
Let $f$ be a non-decreasing function on $(0,\infty)$ and $-1<p\le0$. Assume that $f(x^{1+p})$
is convex on $(0,\infty)$. Then:
\begin{itemize}
\item[\rm(1)] For every $q\in[-1,0]\cup[1,2]$ the function \eqref{F-5.1} is jointly convex.
\item[\rm(2)] For every $q\in[-1,0]$ the function \eqref{F-5.2} is jointly convex.
\item[\rm(3)] If $\Psi=\id$ with $\bM_m=\bM_l$, then \eqref{F-5.2} is jointly convex for every
$q\in[1,2]$.
\end{itemize}
\end{thm}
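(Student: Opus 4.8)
The plan is to push the variational (Legendre-transform) method of the proof of Theorem~\ref{T-5.2} into the convex regime. First I would observe that the hypotheses force $f$ itself to be non-decreasing and convex: writing $g(x):=f(x^{1+p})$, which is convex and non-decreasing, we have $f(x)=g(x^{1/(1+p)})$ with $1/(1+p)\ge1$, so $f$ is the composition of a convex non-decreasing function with the convex function $x^{1/(1+p)}$, hence convex. The degenerate case $p=0$ (where $\Phi(A^p)=\Phi(I)$ is constant in $A$) I would dispose of at once: \eqref{F-5.1} then reduces to $B\mapsto\Tr f(\Phi(I)^{1/2}\Psi(B^q)\Phi(I)^{1/2})$, which is convex since $x^q$ is operator convex for $q\in[-1,0]\cup[1,2]$ and $\Tr f(\cdot)$ is monotone and convex; \eqref{F-5.2} is handled the same way using Lemma~\ref{L-5.1}(a) for part~(2) and $\Psi(B^{-q})^{-1}=B^q$ for part~(3). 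Thus I may assume $-1<p<0$.

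The engine of the proof is the supremum counterpart, to be supplied in Appendix~A, of the infimum formula of Lemma~\ref{L-A.2}: for the non-decreasing convex $f$ with Legendre conjugate $\check f$ one has
$$\Tr f(S^{1/2}TS^{1/2})=\sup_{Y\in\bP_l}\bigl\{\Tr YS^{1/2}TS^{1/2}-\Tr\check f(Y)\bigr\},\qquad S,T\in\bP_l.$$
Putting $X:=(S^{1/2}YS^{1/2})^{1/2}$ (a bijection of $\bP_l$ onto itself) and using that $S^{-1/2}X^2S^{-1/2}$ and $XS^{-1}X$ have the same eigenvalues, this becomes
$$\Tr f(S^{1/2}TS^{1/2})=\sup_{X\in\bP_l}\bigl\{\Tr XTX-\Tr\check f(XS^{-1}X)\bigr\}.$$
The companion property I need from the appendix is the convex analogue of Lemma~\ref{L-A.2}(c): if $f(x^{1+p})$ is convex, then $\check f(x^{-p})$ is non-decreasing and concave on $(0,\infty)$. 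Since a supremum of jointly convex functions is jointly convex, it then suffices, for each fixed $X\in\bP_l$, to show that the two summands are jointly convex in $(A,B)$; as the first depends only on $B$ and the second only on $A$, this splits into a convexity statement in $B$ and a concavity statement in $A$.

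For \eqref{F-5.1} (part~(1)) I take $S=\Phi(A^p)$ and $T=\Psi(B^q)$, so that
$$\Tr f(\Phi(A^p)^{1/2}\Psi(B^q)\Phi(A^p)^{1/2})=\sup_{X\in\bP_l}\bigl\{\Tr X\Psi(B^q)X-\Tr\check f(X\Phi(A^p)^{-1}X)\bigr\}.$$
The $B$-term $\Tr\Psi(B^q)X^2$ is convex precisely because $x^q$ is operator convex for $q\in[-1,0]\cup[1,2]$ and $\Psi$ is positive linear. For the $A$-term, write $X\Phi(A^p)^{-1}X=\Theta(A^p)^{-1}$ with $\Theta(\cdot):=X^{-1}\Phi(\cdot)X^{-1}$ a strictly positive linear map and $r:=-p\in(0,1)$; then $\Theta(A^p)^{-1}=\bigl(\Theta(A^{-r})^{-1/r}\bigr)^{r}$, so with $g(x):=\check f(x^{r})=\check f(x^{-p})$, which is non-decreasing and concave, Corollary~\ref{C-4.5}(1) gives that $A\mapsto\Tr\check f(X\Phi(A^p)^{-1}X)=\Tr g(\Theta(A^{-r})^{-1/r})$ is concave. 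Hence each summand is jointly convex and \eqref{F-5.1} is jointly convex. For \eqref{F-5.2} I rewrite it as $\Tr f(\Phi(A^{-p})^{-1/2}\Psi(B^{-q})^{-1}\Phi(A^{-p})^{-1/2})$ and apply the formula with $S=\Phi(A^{-p})^{-1}$, $T=\Psi(B^{-q})^{-1}$, obtaining $\sup_{X}\{\Tr X\Psi(B^{-q})^{-1}X-\Tr\check f(X\Phi(A^{-p})X)\}$; here the $A$-term is concave by the identical computation, now with $\Theta(\cdot)=X\Phi(\cdot)X$ and the first form $\Theta(A^{r})^{1/r}$ of Corollary~\ref{C-4.5}(1). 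In part~(2), $q\in[-1,0]$, the $B$-term is convex by Lemma~\ref{L-5.1}(b); in part~(3), $\Psi=\id$ forces $\Psi(B^{-q})^{-1}=B^q$, operator convex for $q\in[1,2]$, giving convexity of the $B$-term.

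The substantive work, and the main obstacle, lies entirely in Appendix~A: establishing the supremum variational formula above together with the fact that $\check f(x^{-p})$ is non-decreasing and concave, for an arbitrary non-decreasing convex $f$ on $(0,\infty)$ (no smoothness or strict convexity assumed). Within the present argument the only delicate point is the bookkeeping of exponents—recognizing $\Theta(A^p)^{-1}=\bigl(\Theta(A^{-r})^{-1/r}\bigr)^{r}$ with $r=-p\in(0,1)$ so that the $A$-term falls exactly under Corollary~\ref{C-4.5}(1)—and the observation that part~(3) genuinely needs $\Psi=\id$, since for general $\Psi$ Lemma~\ref{L-5.1} secures operator convexity of $B\mapsto\Psi(B^{-q})^{-1}$ only for $-q\in[0,1]$, that is $q\in[-1,0]$.
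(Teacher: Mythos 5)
Your proposal is correct and follows essentially the same route as the paper: the same Legendre-type supremum formula and the same change of variable $X=(S^{1/2}YS^{1/2})^{1/2}$ (this is Lemma~\ref{L-A.1}(b),(c) of the appendix, where the convex conjugate is written $\hat f$ rather than your $\check f$, and where the needed superlinearity $\lim_{x\to\infty}f(x)/x=+\infty$ is shown to follow automatically from convexity of $f(x^{1+p})$ with $1+p<1$), followed by Corollary~\ref{C-4.5}(1) for the $A$-term exactly as you set it up. The only departures are cosmetic: the paper treats the $B$-term in part~(2) via Corollary~\ref{C-4.5}(1) where you invoke Lemma~\ref{L-5.1}(b), and both work.
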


\begin{proof}
Let $-1<p\le0$ and $f$ be a non-constant and non-decreasing function on $(0,\infty)$. Assume
that $f(x^{1+p})$ is convex on $(0,\infty)$, hence so is $f$ .

(1)\enspace
When $p=0$, \eqref{F-5.1} reduces to $B\mapsto\Tr f(\Phi(I)^{1/2}\Psi(B^q)\Phi(I)^{1/2})$, whose
concavity is immediately seen. So assume that $-1<p<0$. For every $A\in\bP_n$ and $B\in\bP_m$,
by (b) and (c) of Lemma \ref{L-A.1} with $r=-p$ we have, as in the proof of Theorem \ref{T-5.2},
\begin{align}
\Tr f(\Phi(A^p)^{1/2}\Psi(B^q)\Phi(A^p)^{1/2})
&=\sup_{Y\in\bP_l}\bigl\{\Tr \Phi(A^p)^{1/2}Y\Phi(A^p)^{1/2}\Psi(B^q)
-\Tr\hat f(Y)\bigr\} \nonumber\\
&=\sup_{X\in\bP_l}\bigl\{\Tr X\Psi(B^q)X
-\Tr\hat f(X\Phi(A^p)^{-1}X)\bigr\}. \label{F-5.4}
\end{align}
For any fixed $X\in\bP_n$, since $\hat f(x^{-p})$ is non-decreasing and concave on $(0,\infty)$
by Lemma \ref{L-A.1}\,(c) with $r=-p$, it follows from Corollary \ref{C-4.5}\,(1) that
$$
A\in\bP_n\longmapsto\Tr\hat f(X\Phi(A^p)^{-1}X)
=\Tr\hat f\bigl(\bigl((X^{-1}\Phi(A^p)X^{-1})^{1/p}\bigr)^{-p}\bigr)
$$
is concave. Moreover, when $q\in[-1,0]\cup[1,2]$, the function $B\in\bP_m\mapsto\Tr X\Psi(B^q)X$
is convex so that joint convexity of \eqref{F-5.1} follows.

(2)\enspace
When $p=0$ or $q=0$, the assertion is immediate from Lemma \ref{L-5.1}\,(a). When $-1<p<0$ and
$-1\le q<0$, we may replace \eqref{F-5.4} with
\begin{align*}
\Tr f\bigl((\Phi(A^{-p})^{1/2}\Psi(B^{-q})\Phi(A^{-p})^{1/2})^{-1}\bigr)
&=\Tr f\bigl(\Phi(A^{-p})^{-1/2}\Psi(B^{-q})^{-1}\Phi(A^{-p})^{-1/2}\bigr) \\
&=\sup_{X\in\bP_l}\bigl\{\Tr X\Psi(B^{-q})^{-1}X
-\Tr\hat f(X\Phi(A^{-p})X)\bigr\}.
\end{align*}
For any fixed $X\in\bP_l$, it follows from Corollary \ref{C-4.5}\,(1) that
$$
A\in\bP_n\longmapsto\Tr\hat f(X\Phi(A^{-p})X)
=\Tr\hat f\bigl(\bigl((X\Phi(A^{-p})X)^{-1/p}\bigr)^{-p}\bigr)
$$
and
$$
B\in\bP_m\longmapsto-\Tr X\Psi(B^{-q})^{-1}X
=-\Tr\bigl((X^{-1}\Psi(B^{-q})X^{-1})^{-1/q}\bigr)^q
$$
are concave. Hence joint convexity of \eqref{F-5.2} follows.

(3)\enspace
When $\Psi=\id$ and $1\le q\le2$, the assertion follows similarly to the above proof of (2)
since $\Tr X\Psi(B^{-q})^{-1}X=\Tr XB^qX$ is convex in $B$.
\end{proof}

\begin{thm}\label{T-5.4}
Let $f$ be a non-decreasing function on $(0,\infty)$ and $1<p\le2$. Assume that $f(x^{p-1})$
is convex on $(0,\infty)$.
\begin{itemize}
\item[\rm(1)] If $\Phi$ is CP, then \eqref{F-5.1} is jointly convex for every $q\in[-1,0]$.
\item[\rm(2)] If $\Phi=\id$ with $\bM_n=\bM_l$, then \eqref{F-5.2} is jointly convex for every
$q\in[-1,0]$.
\end{itemize}
\end{thm}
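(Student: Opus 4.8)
The plan is to extend the variational (Legendre-transform) method used for Theorems~\ref{T-5.2} and~\ref{T-5.3}, but now driven by Corollary~\ref{C-4.5}\,(2) in place of Corollary~\ref{C-4.5}\,(1). First I would note that the hypotheses already force $f$ to be convex and non-decreasing: writing $g(x):=f(x^{p-1})$, which is convex and non-decreasing, and $f(u)=g\bigl(u^{1/(p-1)}\bigr)$ with $1/(p-1)\ge1$, the outer composition is convex and non-decreasing. Hence $f$ carries the supremum (Legendre) representation on $(0,\infty)$ underlying the formulas of Appendix~A, and the whole scheme of Theorem~\ref{T-5.3} becomes available; the one point that has to change is the treatment of the factor $\Phi(A^p)$, which for $1<p\le2$ must be handled by the convexity statement Corollary~\ref{C-4.5}\,(2) (this is exactly why~(1) assumes $\Phi$ to be CP).

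For part~(1) I would fix $A,H\in\bM_n$ and $B,K\in\bM_m$ with $A,B>0$ and $H,K$ Hermitian, and seek to represent \eqref{F-5.1} as a supremum over an auxiliary $X\in\bP_l$ of integrands jointly convex in $(A,B)$. Dualizing $f$ and substituting $X=(\Phi(A^p)^{1/2}Y\Phi(A^p)^{1/2})^{1/2}$ exactly as in the proof of Theorem~\ref{T-5.3} produces
\[
\Tr f\bigl(\Phi(A^p)^{1/2}\Psi(B^q)\Phi(A^p)^{1/2}\bigr)
=\sup_{X\in\bP_l}\bigl\{\Tr X\Psi(B^q)X-\Tr\hat f\bigl(X\Phi(A^p)^{-1}X\bigr)\bigr\},
\]
in which the $B$-term $\Tr X\Psi(B^q)X$ is convex in $B$ by Lemma~\ref{L-5.1}\,(b) (here $q\in[-1,0]$ is used). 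The decisive step is the $A$-term $-\Tr\hat f(X\Phi(A^p)^{-1}X)=\Tr g_X\bigl(\Phi(A^p)^{1/p}\bigr)$ with $g_X(x)=-\hat f(x^{-p})$: for $0<p\le1$ this is convex because $\Tr\hat f(X\Phi(A^p)^{-1}X)$ is concave by Corollary~\ref{C-4.5}\,(1), which is precisely how Theorem~\ref{T-5.3} succeeds, but for $1<p\le2$ that concavity fails. I therefore expect the real work to lie in replacing the plain Legendre conjugate by the one adapted to the power $p-1$ (the analogue, for this regime, of Lemma~\ref{L-A.1}), engineered so that the resulting $g_X$ is \emph{non-decreasing and convex}; the hypothesis that $f(x^{p-1})$ is convex should be exactly what guarantees this, playing the role that Lemma~\ref{L-A.1}\,(c) plays in Theorem~\ref{T-5.3}. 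Granting that, the $A$-term becomes convex by Corollary~\ref{C-4.5}\,(2) ($\Phi$ CP and $1<p\le2$), each integrand is jointly convex, and the supremum is jointly convex, giving~(1). This identification of the correct conjugate is the main obstacle.

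For part~(2) I would reduce to~(1). With $\Phi=\id$ one has $\bigl(A^{-p/2}\Psi(B^{-q})A^{-p/2}\bigr)^{-1}=A^{p/2}\Psi(B^{-q})^{-1}A^{p/2}$, so \eqref{F-5.2} equals $\Tr f\bigl(A^{p/2}\Psi(B^{-q})^{-1}A^{p/2}\bigr)$. This is of the form \eqref{F-5.1} with the CP map $\id$ and with $\Psi(B^{-q})^{-1}$ in the role of $\Psi(B^q)$, and $\Psi(B^{-q})^{-1}$ is operator convex in $B$ for $q\in[-1,0]$ by Lemma~\ref{L-5.1}\,(a), so that $\Tr X\Psi(B^{-q})^{-1}X$ is convex in $B$ by Lemma~\ref{L-5.1}\,(b). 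Since the argument for~(1) uses only the convexity of the $B$-term, it applies verbatim and yields~(2); the restriction $\Phi=\id$ is needed solely to make the rewriting $\Phi(A^{-p})^{-1}=A^{p}$ available.
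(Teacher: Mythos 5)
There is a genuine gap, and it sits exactly at the step you flag as ``the main obstacle'' and then grant. With the naive substitution $X=(\Phi(A^p)^{1/2}Y\Phi(A^p)^{1/2})^{1/2}$ the entire $A$-dependence is locked inside $\Tr\hat f(X\Phi(A^p)^{-1}X)$, and for joint convexity of the supremum you need this to be \emph{concave} in $A$. No choice of conjugate can make that happen for $1<p\le2$: already for the model case $f(x)=x^s$ with $s\ge1/(p-1)$ one has $\hat f(t)=c\,t^{s/(s-1)}$, so the $A$-term is (a constant times) $\Tr\bigl(X^{-1}\Phi(A^p)X^{-1}\bigr)^{-s/(s-1)}$, which by the known characterization of concavity of $A\mapsto\Tr(X^*A^pX)^{s'}$ (\cite[Proposition 5.1\,(1)]{Hi3}) fails to be concave when $p\in(1,2]$. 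Equivalently, your hoped-for property that $g_X(x)=-\hat f(x^{-p})$ is convex is false in this model case ($-\hat f(x^{-p})=-c\,x^{-ps/(s-1)}$ is concave). So the route ``same variational formula as Theorem \ref{T-5.3}, but with Corollary \ref{C-4.5}\,(2) in place of (1)'' cannot be completed; indeed the paper never invokes Corollary \ref{C-4.5}\,(2) here.

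The missing idea is a redistribution of the powers of $A$ between the two terms of the variational formula, which is what the CP hypothesis is really for. The paper writes $\Phi(Z)=K\pi(Z)K^*$ (Stinespring), uses cyclicity of the trace to pass to $\Tr f(\pi(A)^{p/2}\widetilde\Psi(B^q)\pi(A)^{p/2})-\alpha_0$ with $\widetilde\Psi=K^*\Psi(\cdot)K$, and then, inside the supremum, splits $\pi(A)^{p/2}=\pi(A)^{p/2-1}\pi(A)$ and absorbs $\pi(A)^{p/2-1}$ into the variational variable. This yields
$$
\sup_{X\in\bP_{nk}}\bigl\{\Tr X\pi(A)\widetilde\Psi(B^q)\pi(A)X-\Tr\hat f(X\pi(A)^{2-p}X)\bigr\}-\alpha_0,
$$
where now the first term is jointly convex in $(A,B)$ by Lemma \ref{L-5.1}\,(b), and the second has exponent $2-p\in[0,1)$ so that $\Tr\hat f(X\pi(A)^{2-p}X)$ is concave in $A$ by Corollary \ref{C-4.5}\,(1), using Lemma \ref{L-A.1}\,(c) with $r=2-p$ (this is where the hypothesis that $f(x^{p-1})$ is convex enters). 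Your reduction of (2) to the form $\Tr f(A^{p/2}\Psi(B^{-q})^{-1}A^{p/2})$ is the right first move, but it must then be fed into this redistributed variational formula (with $\Tr XA\Psi(B^{-q})^{-1}AX$ jointly convex by Lemma \ref{L-5.1}\,(b)), not into the argument you sketched for (1).
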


\begin{proof}
Let $1<p\le2$, $-1\le q\le0$ and $f$ be a non-constant and non-decreasing function on
$(0,\infty)$. Assume that $f(x^{p-1})$ is convex on $(0,\infty)$, hence so is $f$.

(1)\enspace
Assume that $\Phi$ is CP. We take the Stinespring representation
$$
\Phi(Z)=K\pi(Z)K^*,\qquad Z\in\bM_n,
$$
where $\pi:\bM_n\to\bM_{nk}$ is a representation and $K:\bC^{nk}\to\bC^l$ is a linear map
(see, e.g., \cite[Theorem 3.1.2]{Bh2}). We have
\begin{align}
\Tr f(\Phi(A^p)^{1/2}\Psi(B^q)\Phi(A^p)^{1/2})
&=\Tr f(\Psi(B^q)^{1/2}K\pi(A^p)K^*\Psi(B^q)^{1/2}) \nonumber\\
&=\Tr f(\pi(A)^{p/2}K^*\Psi(B^q)K\pi(A)^{p/2})-\alpha_0, \label{F-5.5}
\end{align}
where $\alpha_0:=f(0+)\Tr(I_{nk}-P_0)$ with $P_0$ the orthogonal projection onto the range of
$K^*K$. Assume that $1<p<2$. Letting $\widetilde\Psi(\cdot):=K^*\Psi(\cdot)K:\bM_m\to\bM_{nk}$,
by (b) and (c) of Lemma \ref{L-A.1} with $r=2-p$ we further have
\begin{align*}
&\Tr f(\Phi(A^p)^{1/2}\Psi(B^q)\Phi(A^p)^{1/2}) \\
&\qquad=\sup_{Y\in\bP_{nk}}\bigl\{\Tr Y\pi(A)^{p/2}
\widetilde\Psi(B^q)\pi(A)^{p/2}-\Tr\hat f(Y)\bigr\}-\alpha_0 \\
&\qquad=\sup_{Y\in\bP_{nk}}\bigl\{\Tr \pi(A)^{{p\over2}-1}Y
\pi(A)^{{p\over2}-1}\pi(A)\widetilde\Psi(B^q)\pi(A)-\Tr\hat f(Y)\bigr\}-\alpha_0 \\
&\qquad=\sup_{X\in\bP_{nk}}\bigl\{\Tr X\pi(A)\widetilde\Psi(B^q)\pi(A)X
-\Tr\hat f(X\pi(A)^{2-p}X)\bigr\}-\alpha_0.
\end{align*}
For any fixed $X\in\bP_{nk}$, since $\hat f(x^{2-p})$ is non-decreasing
and concave on $(0,\infty)$ by Lemma \ref{L-A.1}\,(c) with $r=2-p$, Corollary \ref{C-4.5}\,(1)
implies that
$$
A\in\bP_n\longmapsto\Tr\hat f(X\pi(A)^{2-p}X)
$$
is concave. Moreover, $(A,B)\in\bP_n\times\bP_m\mapsto\Tr X\pi(A)\widetilde\Psi(B^q)\pi(A)X$
is convex due to Lemma \ref{L-5.1}\,(b). Here, although $\widetilde\Psi$ is not necessarily
strictly positive, we can simply take the convergence from strictly positive maps. Joint
convexity of \eqref{F-5.1} thus follows. The case $p=2$ also follows by using Lemma
\ref{L-5.1}\,(b) to \eqref{F-5.5} directly.

(2)\enspace
Assume that $\Phi=\id$ with $\bM_n=\bM_l$. The function \eqref{F-5.2} in this case is
$\Tr f(A^{p/2}\Psi(B^{-q})^{-1}A^{p/2})$. As in the above proof for \eqref{F-5.1} we have
$$
\Tr f(A^{p/2}\Psi(B^{-q})^{-1}A^{p/2})
=\sup_{X\in\bP_l}\bigl\{\Tr XA\Psi(B^{-q})^{-1}AX-\Tr\hat f(XA^{2-p}X)\bigr\}.
$$
Hence the assertion follows since $(A,B)\in\bP_n\times\bP_m\mapsto\Tr XA\Psi(B^{-q})^{-1}AX$ is
jointly convex by Lemma \ref{L-5.1}\,(b).
\end{proof}

\begin{remark}\label{R-5.5}\rm
For $1\le q\le2$ (resp., $-1\le q\le0$) joint convexity of \eqref{F-5.2} holds in (3) of
Theorem \ref{T-5.3} (resp., (2) of Theorem \ref{T-5.4}) in a slightly more
general case where $\Psi$ (resp., $\Phi$) $=K\cdot K^*:\bM_l\to\bM_l$ with an invertible
$K\in\bM_l$. However, this is not true when $\Psi:\bM_m\to\bM_l$ (resp., $\Phi:\bM_n\to\bM_l$)
is a general CP map. For instance, let $-1<p\le0$, $f(x)=x^s$ with $s\ge1/(1+p)$, and $E$ be an
orthogonal projection in $\bM_n$. Let $\Psi:\bM_n\to E\bM_nE$ ($\cong\bM_l$) be as defined in
Remark \ref{R-4.6}. Then joint convexity of \eqref{F-5.2} would imply in particular that
$B\in\bP_n\mapsto\Tr\Psi(B^{-q})^{-s}$ for $1\le q\le2$ is convex. But this is not true for
any $q,s>0$ as shown in Remark \ref{R-4.6}. Therefore, (3) of Theorem \ref{T-5.3} is not true
for a general CP map $\Psi$. When $1<p\le2$ and $f(x)=x^s$ with $s\ge1/(p-1)$, the same argument
with $p,\Phi$ in place of $q,\Psi$ works for (2) of Theorem \ref{T-5.4}.
\end{remark}

We finally give the next theorem in the special case where $q=2$, whose proof is essentially
same as that of \cite[Theorem 4.2]{CFL}.

\begin{thm}\label{T-5.6}
Let $-1\le p\le0$ and $f$ be a non-decreasing concave function on $(0,\infty)$ such that
$\lim_{x\to\infty}f(x)/x=0$. Assume that $f(x^{2+p})$ is convex on $(0,\infty)$. If $q=2$ and
$\Phi,\Psi$ are CP, then \eqref{F-5.1} is jointly convex.
\end{thm}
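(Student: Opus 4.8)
The plan is to imitate the variational argument of \cite[Theorem 4.2]{CFL}, exploiting the special value $q=2$ to linearize the $B$-dependence. Since $\Psi$ is CP, I would first fix a Stinespring representation $\Psi(\cdot)=L\rho(\cdot)L^*$ with $\rho$ a $*$-representation, so that $B\mapsto\rho(B)$ is \emph{linear} and $\rho(B)^2=\rho(B^2)$. Applying the spectral identity $\Tr g(W^*W)=\Tr g(WW^*)+c$ to $W:=\rho(B)L^*\Phi(A^p)^{1/2}$ then converts the problem, up to the additive constant $c$, into joint convexity of
$$
(A,B)\longmapsto\Tr f\bigl(\rho(B)\,\widetilde\Phi(A^p)\,\rho(B)\bigr),\qquad
\widetilde\Phi(\cdot):=L^*\Phi(\cdot)L\ \ \text{(CP)}.
$$
Here $f(0+)$ is finite because $f(x^{2+p})$ is convex and non-decreasing, so the constant is harmless. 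The key structural observations are that the inner argument $G(A,B):=\rho(B)\widetilde\Phi(A^p)\rho(B)$ is homogeneous of degree $2+p$ in $(A,B)$, and that by Lemma~\ref{L-5.1}(b), applied with $(\Psi,B,q)\to(\widetilde\Phi,A,p)$ and $X\to\rho(B)$ (legitimate since $p\in[-1,0]$ and $\rho$ is linear), the map $(A,B)\mapsto G(A,B)$ is \emph{jointly operator convex}.

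Next I would pass to the reparametrized function $\phi(x):=f(x^{2+p})$, which is convex and non-decreasing by hypothesis; the growth condition $\lim_{x\to\infty}f(x)/x=0$ is exactly what is needed for the conjugate to be finite, so that the Legendre-type formula of Lemma~\ref{L-A.1} is available. Since $\Tr f(G)=\Tr\phi(G^{1/(2+p)})$, that formula yields a supremum representation
$$
\Tr f\bigl(\rho(B)\widetilde\Phi(A^p)\rho(B)\bigr)
=\sup_{Y\in\bP_l}\Bigl\{\Tr Y\,G(A,B)^{1/(2+p)}-\Tr\hat\phi(Y)\Bigr\}.
$$
Because a pointwise supremum of jointly convex functions is jointly convex and $\Tr\hat\phi(Y)$ is constant in $(A,B)$, the proof reduces to showing that for each fixed $Y\ge0$ the weighted trace $(A,B)\mapsto\Tr Y\,(\rho(B)\widetilde\Phi(A^p)\rho(B))^{1/(2+p)}$ is jointly convex. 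This weighted power-trace convexity is the heart of the matter, and I would establish it as in \cite[Theorem 4.2]{CFL}: a second application of the variational formulae of Appendix~A (for the operator concave power $x^{1/(2+p)}$, with $1/(2+p)\in[\tfrac12,1]$) separates the expression into an $A$-term that is jointly convex by joint operator convexity of $X^*\widetilde\Phi(A^p)X$ (Lemma~\ref{L-5.1}(b), using $p\in[-1,0]$) and a $B$-term that is concave by Corollary~\ref{C-4.5}(1), the relevant conjugate composed with a power in $(0,1]$ being non-decreasing and concave; linearity of $\rho$ keeps the $B$-dependence affine, so these results apply after restriction to the graph of $A\mapsto\widetilde\Phi(A^p)$.

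The hard part will be precisely this last step. The tension is that $f$ is only \emph{concave}, whereas joint convexity of $\Tr f(G)$ with $G$ operator convex would be immediate only if $f$ were non-decreasing and \emph{convex}; the gap is bridged solely by the two special features of the hypotheses, namely $q=2$ (which, through $\rho$, makes the $B$-variable affine and the sandwich exactly degree $2+p$) and the convexity of $f(x^{2+p})$ (which supplies the correct supremum representation). I expect the bookkeeping of the constant $f(0+)$, the non-invertibility of $\rho(B)$ (handled on the support of $\rho$, or by an $\eps$-approximation as in the proof of Theorem~\ref{T-3.1}), and the verification that the auxiliary optimizer stays in $\bP_l$ under the growth condition $\lim_{x\to\infty}f(x)/x=0$ to be the remaining technical points, all dispatched as in \cite{CFL}.
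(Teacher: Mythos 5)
Your opening moves match the paper's: Stinespring for $\Psi$ to replace $\Psi(B^2)$ by $K\pi(B)^2K^*$, cyclicity of the trace to pass (up to the constant $f(0+)\Tr(I-P_0)$) to $\Tr f(\pi(B)\widetilde\Phi(A^p)\pi(B))$ with $\widetilde\Phi=K^*\Phi(\cdot)K$, and the recognition that the $B$-dependence is now affine. But from there your argument has a genuine gap at exactly the step you yourself flag as ``the heart of the matter.'' You propose the supremum representation $\Tr f(G)=\Tr\phi(G^{1/(2+p)})=\sup_Y\{\Tr Y\,G^{1/(2+p)}-\Tr\hat\phi(Y)\}$ with $\phi(x)=f(x^{2+p})$, and then need $(A,B)\mapsto\Tr Y\,(\pi(B)\widetilde\Phi(A^p)\pi(B))^{1/(2+p)}$ to be jointly convex for each fixed $Y$. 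This is never established: joint \emph{operator} convexity of $G$ (which you correctly get from Lemma \ref{L-5.1}\,(b)) does not survive composition with the operator concave power $x^{1/(2+p)}$, and for $Y=I$ the claimed convexity \emph{is} the $f(x)=x^{1/(2+p)}$ case of the theorem itself, so the reduction is essentially circular. The sketched ``second application of the variational formulae'' to separate an $A$-term from a $B$-term cannot work as stated, because $A$ and $B$ remain entangled inside the $(2+p)$-th root; any honest completion of that step would have to reintroduce a Lieb-type triple convexity, i.e., the paper's actual mechanism. There are also two smaller defects: Lemma \ref{L-A.1} requires $\lim_{x\to\infty}\phi(x)/x=+\infty$, which fails for the borderline $\phi(x)=x$ (e.g.\ $f(x)=x^{1/(2+p)}$), so the sup-formula is not even available in general; and the case $p=-1$, where $\phi=f$ is concave rather than convex, is not treated.

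The paper's proof goes the other way around and avoids taking any fractional power of $G$. It applies the \emph{infimum} formula of Lemma \ref{L-A.2}\,(b) directly to the concave $f$, so the auxiliary variable multiplies $G$ linearly:
$$
\Tr f(\pi(B)\widetilde\Phi(A^p)\pi(B))
=\inf_{Y\in\bP_{mk}}\bigl\{\Tr\pi(B)Y^{-1-p}\pi(B)\widetilde\Phi(A^p)-\Tr\check f(Y^{-1-p})\bigr\},
$$
after the substitution $Z=Y^{-1-p}$. The hypothesis that $f(x^{2+p})$ is convex enters only through Lemma \ref{L-A.2}\,(d) (with $r=1+p$), which makes $Y\mapsto\Tr\check f(Y^{-1-p})$ concave; the main term $\Tr\pi(B)Y^{-1-p}\pi(B)\widetilde\Phi(A^p)=\Tr\widetilde K^*\pi(B)Y^{-1-p}\pi(B)\widetilde K\,\tilde\pi(A)^p$ is jointly convex in $(A,B,Y)$ by Lieb's convexity theorem \cite[Corollary 2.1]{Li} (exponents $-p$ and $1+p$ summing to $1$), using a second Stinespring representation for $\widetilde\Phi$; and \cite[Lemma 2.3]{CL2} then shows the infimum over $Y$ of a jointly convex function is jointly convex in $(A,B)$. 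To repair your proposal you would need to replace your sup-representation by this inf-representation (or supply an independent proof of the weighted power-trace convexity you assume), and handle $p=-1$ separately via Lemma \ref{L-5.1}\,(b).
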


\begin{proof}
Assume that $q=2$ and $\Phi,\Psi$ are CP. Write $\Psi(Z)=K\pi(Z)K^*$ with a representation
$\pi:\bM_m\to\bM_{mk}$ and a linear map $K:\bC^{mk}\to\bC^l$, and let
$\widetilde\Phi(\cdot):=K^*\Phi(\cdot)K:\bM_n\to\bM_{nk}$. Then \eqref{F-5.1} with $q=2$ is
written as $\Tr f(\pi(B)\widetilde\Phi(A^p)\pi(B))-\alpha_0$, where $\alpha_0$ is as given in
\eqref{F-5.5}. When $p=-1$, $\Tr f(\pi(B)\widetilde\Phi(A^{-1})\pi(B))$ is joint convex in
$(A,B)$ by Lemma \ref{L-5.1}\,(b). So we may assume that $-1<p\le0$. For every $A\in\bP_n$ and
$B\in\bP_m$, by Lemma \ref{L-A.2}\,(b) we have
\begin{align*}
&\Tr f(\Phi(A^p)^{1/2}\Psi(B^2)\Phi(A^p)^{1/2}) \\
&\qquad=\inf_{Y\in\bP_{mk}}\bigl\{\Tr\pi(B)Y^{-1-p}\pi(B)\widetilde\Phi(A^p)
-\Tr\check f(Y^{-1-p})\bigr\}-\alpha_0.
\end{align*}
Since $\check f(x^{-1-p})$ is concave on $(0,\infty)$ by Lemma \ref{L-A.2}\,(d) with $r=1+p$,
it follows that $Y\in\bP_l\mapsto\Tr\check f(Y^{-1-p})$ is concave. Hence, by
\cite[Lemma 2.3]{CL2} it suffices to show that
$(A,B,Y)\in\bP_n\times\bP_m\times\bP_{mk}\mapsto\Tr\pi(B)Y^{-1-p}\pi(B)\widetilde\Phi(A^p)$ is
jointly convex. When $p=0$, this holds by \cite[Theorem 1]{LR}. So assume that $-1<p<0$. Since
$\widetilde\Phi$ is CP, we may write $\widetilde\Phi(Z)=\widetilde K\tilde\pi(Z)\widetilde K^*$
with a representation $\tilde\pi:\bM_n\to\bM_{n\tilde k}$ and a linear map
$\widetilde K:\bC^{n\tilde k}\to\bC^{nk}$. Then
$$
\Tr\pi(B)Y^{-1-p}\pi(B)\widetilde\Phi(A^p)
=\Tr\widetilde K^*\pi(B)Y^{-1-p}\pi(B)\widetilde K\tilde\pi(A)^p,
$$
which is jointly convex in $(A,B,Y)$ by \cite[Corollary 2.1]{Li}.
\end{proof}

The theorems proved above of course holds also when the roles of $p,\Phi$ and $q,\Psi$ are
interchanged. In the case of power functions $f(x)=x^s$ we have a variety of ranges of $(p,q,s)$
for joint convexity of \eqref{F-2.4} from the above theorems, which are listed in the following
as well as their counterparts where $p,\Phi$ and $q,\Psi$ are interchanged:
\begin{itemize}
\item[\rm(i)] $0\le p,q\le1$, $s\le0$, or $-1\le p,q\le0$, $s\ge0$, by Theorem \ref{T-5.2}.
\item[\rm(ii)] $-1\le p\le0$, $1\le q\le2$, $s\ge\min\{1/(p+1),1/(q-1)\}$ (with convention
$1/(-1+1)=1/(1-1)=\infty$), and $\Psi$ is CP, by Theorems \ref{T-5.3}\,(1) and \ref{T-5.4}\,(1).
\item[\rm(iii)] $0\le p\le1$, $-2\le q\le-1$, $s\le\max\{1/(p-1),1/(q+1)\}$ (with convention
$1/(1-1)=1/(-1+1)=-\infty$), and $\Psi=\id$,
by Theorems \ref{T-5.3}\,(3) and \ref{T-5.4}\,(2).
\item[\rm(iv)] $-1\le p\le0$, $q=2$, $s\ge1/(2+p)$, and $\Phi,\Psi$ are CP,
by Theorems \ref{T-5.6} and \ref{T-5.4}\,(1).
\end{itemize}

For the function \eqref{F-2.5} (when $\Phi=\Psi=\id$), the convexity results in the cases
(ii), (iii) and (iv) are contained in \cite{CFL}, as seen from
$\Tr(A^{p/2}B^qA^{p/2})^s=\Tr(A^{-p/2}B^{-q}A^{-p/2})^{-s}$. Compared with the necessary
conditions in \cite[Proposition 5.4\,(2)]{Hi3}, the missing region for joint convexity in this
situation is only
$$
-1<p<0,\quad 1<q<2,\quad{1\over p+q}\le s\ (\ne1)
<\min\biggl\{{1\over p+1},{1\over q-1}\biggr\},
$$
and its counterparts where $(p,q,s)$ are replaced with $(-p,-q,-s)$ and/or $p,q$ are
interchanged. Here, note that joint convexity is known when $-1\le p\le0$, $1\le q\le2$,
$s=1\ge1/(p+q)$, due to Ando \cite{An}. In connection with the above missing region, it might
be expected that Theorem \ref{T-5.6} and its proof are also valid in the case where
$-1\le p\le0$, $1\le q\le2$ and $p+q\ge1$. But this does not seem possible due to
\cite[Theorem 3.2]{CFL}.

\appendix

\section{Variational formulas of trace functions}

In this appendix we provide some variational formulas, which have played an essential role in
Section 5, but which may also be of independent interest. For the convenience in exposition let
us introduce the following classes of functions on $(0,\infty)$:
\begin{itemize}
\item $\cF_\convex^\nearrow(0,\infty)$ is the set of non-decreasing convex real functions $f$
on $(0,\infty)$ such that $\lim_{x\to\infty}f(x)/x=+\infty$.
\item $\cF_\concave^\nearrow(0,\infty)$ is the set of non-decreasing concave real functions $f$
on $(0,\infty)$ such that $\lim_{x\to\infty}f(x)/x=0$.
\end{itemize}
Note that affine functions $ax+b$ ($a\ge0$) are excluded from $\cF_\convex^\nearrow(0,\infty)$,
and so are $ax+b$ ($a>0$) from $\cF_\concave^\nearrow(0,\infty)$. 

\begin{lemma}\label{L-A.1}
\begin{itemize}
\item[\rm(a)] For each $f\in\cF_\convex^\nearrow(0,\infty)$ define
$$
\hat f(t):=\sup_{x>0}\{xt-f(x)\},\qquad t\in(0,\infty).
$$
Then $\hat f\in\cF_\convex^\nearrow(0,\infty)$ and $f\mapsto\hat f$ is an involutive bijection
on $\cF_\convex^\nearrow(0,\infty)$, i.e., $\hat{\hat f}=f$ for all
$f\in\cF_\convex^\nearrow(0,\infty)$.
\item[\rm(b)] For every $f\in\cF_\convex^\nearrow(0,\infty)$ and $B\in\bM_n^+$,
$$
\Tr f(B)=\sup_{A\in\bP_n}\bigl\{\Tr AB-\Tr\hat f(A)\bigr\},
$$
where $f$ is continuously extended to $[0,\infty)$.
\item[\rm(c)] Let $f$ be a non-constant and non-decreasing function on $(0,\infty)$ and $0<r<1$.
Then $f(x^{1-r})$ is convex on $(0,\infty)$ if and only if $f\in\cF_\convex^\nearrow(0,\infty)$
and $\hat f(x^r)$ is concave on $(0,\infty)$.
\end{itemize}
\end{lemma}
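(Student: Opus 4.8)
The plan is to handle the three parts in order, using convex duality on the half-line $(0,\infty)$ for (a), a trace Young inequality for (b), and a joint-concavity-of-the-geometric-mean device (together with an inverse-function argument) for (c). The only genuinely hard point is one implication in (c).

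For (a), I would first note that every $f\in\cF_\convex^\nearrow(0,\infty)$ extends continuously to $[0,\infty)$ with $f(0)=\lim_{x\searrow0}f(x)$ finite (a non-decreasing convex function has $f'_+\ge0$ non-decreasing, so $f(0^+)$ is finite), and that superlinear growth makes $xt-f(x)\to-\infty$ as $x\to\infty$, so $\hat f(t)$ is finite for each $t>0$. As a pointwise supremum of the affine maps $t\mapsto xt-f(x)$, $\hat f$ is convex and non-decreasing; taking $x=M$ gives $\hat f(t)/t\ge M-f(M)/t\to M$, so $\hat f(t)/t\to+\infty$ and $\hat f\in\cF_\convex^\nearrow(0,\infty)$. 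For the involution I would extend $f$ to $\bR$ by $\bar f=+\infty$ on $(-\infty,0)$, making $\bar f$ proper, closed and convex; its Fenchel conjugate $\bar f^{*}$ agrees with $\hat f$ on $(0,\infty)$ and equals the constant $-f(0)$ on $(-\infty,0]$ (because $f$ is non-decreasing). Fenchel--Moreau ($\bar f^{**}=\bar f$), after splitting the supremum over $t>0$ and $t\le0$, gives $f(x)=\max\{\hat{\hat f}(x),f(0^+)\}$ for $x>0$. Since $\hat f(0^+)=-f(0^+)$ forces $\hat{\hat f}(x)\ge f(0^+)$ (via the limit $t\searrow0$), this yields $\hat{\hat f}=f$, so the map is its own inverse, hence a bijection.

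For (b), ``$\le$'' is a trace Young inequality: from the von Neumann-type bound $\Tr AB\le\sum_i\lambda_i^\downarrow(A)\lambda_i^\downarrow(B)$ for $A,B\in\bM_n^+$ and the scalar Young inequality $ab\le f(b)+\hat f(a)$ (immediate from the definition of $\hat f$), one gets $\Tr AB\le\Tr f(B)+\Tr\hat f(A)$ for every $A\in\bP_n$. For ``$\ge$'' I would diagonalize $B=\sum_i b_iP_i$ and, given $\eps>0$, pick $a_i>0$ with $a_ib_i-\hat f(a_i)>f(b_i)-\eps$ (possible since $\sup_{a>0}\{ab_i-\hat f(a)\}=\hat{\hat f}(b_i)=f(b_i)$ by (a)); then $A:=\sum_i a_iP_i\in\bP_n$ nearly attains the supremum, and $\eps\searrow0$ gives equality.

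For (c), set $g(x):=f(x^{1-r})$. If $g$ is convex, write $f(x)=g(x^{1/(1-r)})$: as $g$ is convex non-decreasing and $x\mapsto x^{1/(1-r)}$ is convex increasing, $f$ is convex, and a positive-slope supporting line of $g$ gives $f(x)\ge\alpha x^{1/(1-r)}-\beta$, so $f(x)/x\to+\infty$ and $f\in\cF_\convex^\nearrow$. For concavity of $\hat f(x^r)$ I would substitute $x=u^{1-r}$ in the definition of $\hat f$ to get
$$\hat f(s^r)=\sup_{u>0}\bigl\{u^{1-r}s^r-g(u)\bigr\},$$
and observe that $\Theta(u,s)=u^{1-r}s^r-g(u)$ is jointly concave on $(0,\infty)^2$ (the weighted geometric mean $u^{1-r}s^r$ is jointly concave, and $-g$ is concave); the partial supremum of a jointly concave function is concave, so $\hat f(s^r)$ is concave. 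The converse is the main obstacle: the dual identity $f(u^{1-r})=\sup_{s>0}\{u^{1-r}s^r-\hat f(s^r)\}$ now has $-\hat f(s^r)$ convex, so the integrand is neither jointly concave nor jointly convex and the geometric-mean trick fails. Here I would instead use the differential characterizations ``$f(x^{1-r})$ convex $\iff y\mapsto y^{-r/(1-r)}f'_+(y)$ is non-decreasing'' and ``$\hat f(x^r)$ concave $\iff t\mapsto t^{-(1-r)/r}\hat f'_+(t)$ is non-increasing'', and transfer one to the other using that $f'_+$ and $\hat f'_+$ are mutually inverse monotone functions: raising the non-decreasing positive quantity $y^{-r/(1-r)}t$ (with $t=f'_+(y)$) to the power $-(1-r)/r$ shows $t^{-(1-r)/r}y$ is non-increasing. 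The delicate part, which I expect to be the real work, is justifying this equivalence for general (possibly non-differentiable, non-strictly-convex) $f$, where $f'_+$ may be flat and $\hat f'_+$ may jump; this is handled by working with one-sided derivatives and their generalized inverses rather than with $f''$.
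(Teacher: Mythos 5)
Your parts (a) and (b) follow essentially the same route as the paper: Fenchel--Moreau duality after extending $f$ to the whole line (you extend by $+\infty$ on $(-\infty,0)$, the paper by the constant $f(0+)$ --- both work, and both write-ups quietly use $\hat f(0+)=-f(0+)$, whose ``$\le$'' half needs the superlinearity of $f$ and deserves a line), and for (b) the von Neumann/majorization bound $\Tr AB\le\sum_i\lambda_i^\downarrow(A)\lambda_i^\downarrow(B)$ combined with the scalar Young inequality in one direction and diagonal $A$ in the other. Part (c) is where you genuinely diverge. For the implication ``$f(x^{1-r})$ convex $\Rightarrow\hat f(x^r)$ concave'' your substitution $x=u^{1-r}$, giving $\hat f(s^r)=\sup_{u>0}\{u^{1-r}s^r-f(u^{1-r})\}$, followed by joint concavity of the weighted geometric mean and the fact that a partial supremum of a jointly concave function is concave, is correct and markedly cleaner than the paper's argument, which mollifies $f$ multiplicatively (Lemma A.3), perturbs by $\eps x^{1/(1-r)}$ to force $f''>0$, and then computes $\tfrac{d^2}{dt^2}\hat f(t^r)$ explicitly via $\hat f(t)=t(f')^{-1}(t)-f\bigl((f')^{-1}(t)\bigr)$. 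For the converse the paper swaps $f\leftrightarrow\hat f$, $r\leftrightarrow 1-r$ and reruns the same smoothed computation with the inequality reversed, whereas you transfer the monotonicity condition across the inverse relation between $f'_+$ and $\hat f'_+$. The delicate step you flag does close without smoothing: for $0<y_1<y_2$ with $t_i:=f'_+(y_i)$ and $t_1>0$ one has $y_i\in\partial\hat f(t_i)$, hence
$t_1^{-(1-r)/r}y_1\ \ge\ t_1^{-(1-r)/r}\hat f'_-(t_1)\ =\ \lim_{s\nearrow t_1}s^{-(1-r)/r}\hat f'_+(s)\ \ge\ t_2^{-(1-r)/r}\hat f'_+(t_2)\ \ge\ t_2^{-(1-r)/r}y_2$,
which upon raising to the power $-r/(1-r)$ is exactly $y_1^{-r/(1-r)}t_1\le y_2^{-r/(1-r)}t_2$ (the case $t_1=0$ being trivial); the troublesome configurations --- a flat stretch of $f'_+$ at a positive level, in particular near $0$ when $f'(0+)>0$ --- are precisely those that make $s^{-(1-r)/r}\hat f'_+(s)$ jump upward and are therefore excluded by the concavity hypothesis. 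In sum: your forward argument is shorter and structural but does not yield the converse by itself; the paper's regularization scheme is heavier but uniform, delivering both directions of (c) here and parts (c),(d) of Lemma A.2 for the concave class with one computation.
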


\begin{proof}
(a)\enspace
Let $f\in\cF_\convex^\nearrow(0,\infty)$ and $t\in(0,\infty)$. Since
$f(0+):=\lim_{x\to0+}f(x)$ exists in $\bR$ and
$xt-f(x)=x(t-f(x)/x)\to-\infty$ as $x\to\infty$, it follows that $\hat f(t)$ is defined as a
finite value. By definition it is clear that $\hat f$ is convex and non-decreasing. For any
$x>0$ fixed, since $\hat f(t)/t\ge x-f(x)/t\to x$ as $t\to\infty$, we have
$\lim_{t\to\infty}\hat f(t)/t=+\infty$, so $\hat f\in\cF_\convex^\nearrow(0,\infty)$. To
show that $f\mapsto\hat f$ is an involutive bijection, we appeal to the duality of conjugate
functions (or the Legendre transform) on $\bR$. For each $f\in\cF_\convex^\nearrow(0,\infty)$
we extend $f$ to a continuous convex function $\bar f$ on the whole $\bR$ by $\bar f(x):=f(0+)$
for $x\le0$. Then it is plain to see that the conjugate function
$\bar f^*(t):=\sup_{x\in\bR}\{xt-\bar f(x)\}$ is
$$
\bar f^*(t)=\begin{cases}+\infty & \text{if $t<0$}, \\
-f(0+)=\hat f(0+) & \text{if $t=0$}, \\
\hat f(t) & \text{if $t>0$}.
\end{cases}
$$
Due to the duality for conjugate functions, we have for $x>0$,
$$
f(x)=\sup_{t\in\bR}\{xt-\bar f^*(t)\}
=\sup_{t>0}\{xt-\hat f(t)\}=\hat{\hat f}(x).
$$

(b)\enspace
To prove the assertion, we may assume that $B\in\bM_n^+$ is diagonal so that
$B=\diag(b_1,\dots,b_n)$ with $b_1\ge\dots\ge b_n$. Since $f(x)=\sup_{t>0}\{tx-\hat f(t)\}$
for $x\ge0$, we have
\begin{align*}
\Tr f(B)&=\sum_{i=1}^nf(b_i)
=\sup_{a_1,\dots,a_n>0}\sum_{i=1}^n\bigl\{a_ib_i-\hat f(a_i)\bigr\} \\
&=\sup_{A=\diag(a_1,\dots,a_n)\in\bP_n}\bigl\{\Tr AB-\Tr\hat f(A)\bigr\} \\
&\le\sup_{A\in\bP_n}\bigl\{\Tr AB-\Tr\hat f(A)\bigr\}.
\end{align*}
On the other hand, for every $A\in\bP_n$ with eigenvalues $a_1\ge\dots\ge a_n$, since
$\Tr AB\le\sum_{i=1}^na_ib_i$ by majorization (see, e.g., \cite[(III.19)]{Bh1},
\cite[Corollary 4.3.5]{Hi2}), we have
$$
\Tr AB-\Tr\hat f(A)\le\sum_{i=1}^n\bigl\{a_ib_i-\hat f(a_i)\bigr\}
\le\sum_{i=1}^nf(b_i)=\Tr f(B),
$$
and so
$$
\sup_{A\in\bP_n}\bigl\{\Tr AB-\Tr\hat f(A)\bigr\}\le\Tr f(B).
$$

(c)\enspace
Let $f$ be a non-constant and non-decreasing function on $(0,\infty)$ and $0<r<1$. Assume that
$\tilde f(x):=f(x^{1-r})$ is convex on $(0,\infty)$. Then it immediately follows that $f$ is
convex on $(0,\infty)$. Since $f(x^{1-r})/x^{1-r}=(\tilde f(x)/x)x^r\to+\infty$ as $x\to\infty$,
we have $f\in\cF_\convex^\nearrow(0,\infty)$. To show concavity of $\hat f(x^r)$, we can assume
that $f$ is $C^2$ (even $C^\infty$) on $(0,\infty)$. Indeed, let $\phi$ be a $C^\infty$
function on $\bR$ supported on $[-1,1]$ such that $\phi(x)\ge0$ and $\int_{-1}^1\phi(x)\,dx=1$.
For each $\eps>0$ define a function $f_\eps$ on $(0,\infty)$ by
\begin{equation}\label{F-A.1}
f_\eps(x):=\int_{-1}^1\phi(t)f(xe^{-\eps t})\,dt,\qquad x\in(0,\infty).
\end{equation}
Note that this product type regularization $f_\eps$ is $h_\eps(\log x)$, $x>0$, where $h_\eps$
is the usual (additive type) regularization $h_\eps(s):=\int_{-1}^1\phi(t)h(s-\eps t)\,dt$ of
$h(s):=f(e^s)$, $s\in\bR$ (see, e.g., \cite[pp.\,146--147]{Bh1}, \cite[Appendix A.2]{Hi2}).
Then, $f_\eps$ is $C^\infty$ on $(0,\infty)$ and $f_\eps\to f$ as $\eps\searrow0$ uniformly on
any bounded closed interval of $(0,\infty)$. It is clear that $f_\eps$ satisfies the same
assumption as $f$. Moreover, we see that $\hat f_\eps\to\hat f$ as $\eps\searrow0$ uniformly on
any bounded closed interval of $(0,\infty)$, whose proof is given in Lemma \ref{L-A.3} below
for completeness. So we may prove the conclusion for $f_\eps$ in place of $f$.

By taking the limit as $x\to0+$ of the equation
$$
{d\over dx}\,f(x^{1-r})=(1-r)x^{-r}f'(x^{1-r}),
$$
we see that $f'(0+)=0$. We can approximate $f$ by $g_\eps(x):=f(x)+\eps x^{1/(1-r)}$ for
$\eps>0$ so that $g_\eps$ satisfies the same assumption as $f$ and $\hat g_\eps(x)\to\hat f(x)$
as $\eps\searrow0$. Hence we furthermore assume that $f''(x)>0$ for all $x>0$ and so $f'(x)$ is
strictly increasing on $(0,\infty)$. Now, compute the second derivative of $f(x^{1-r})$ as
\begin{equation}\label{F-A.2}
{d^2\over dx^2}\,f(x^{1-r})=(1-r)x^{-r-1}
\bigl\{(1-r)x^{1-r}f''(x^{1-r})-rf'(x^{1-r})\bigr\},
\end{equation}
and therefore
\begin{equation}\label{F-A.3}
(1-r)x^{1-r}f''(x^{1-r})-rf'(x^{1-r})\ge0,\qquad x>0.
\end{equation}
For every $t>0$, since $f'(0+)=0$ and $\lim_{x\to\infty}f'(x)=\lim_{x\to\infty}f(x)/x=+\infty$,
there is a unique $x_0>0$ such that $f'(x_0)=t$ and thus $xt-f(x)$ on $x>0$ takes the maximum
at $x=x_0=(f')^{-1}(t)$. Hence
$$
\hat f(t)=t(f')^{-1}(t)-f\bigl((f')^{-1}(t)\bigr).
$$
We further compute
\begin{align*}
{d\over dt}\,\hat f(t^r)
&=rt^{r-1}(f')^{-1}(t^r)+t^r{rt^{r-1}\over f''\bigl((f')^{-1}(t^r)\bigr)}
-f'\bigl((f')^{-1}(t^r)\bigr){rt^{r-1}\over f''\bigl((f')^{-1}(t^r)\bigr)} \\
&=rt^{r-1}(f')^{-1}(t^r), \\
{d^2\over dt^2}\,\hat f(t^r)
&=r(r-1)t^{r-2}(f')^{-1}(t^r)+rt^{r-1}{rt^{r-1}\over f''\bigl((f')^{-1}(t^r)\bigr)} \\
&={rt^{r-2}\over f''\bigl((f')^{-1}(t^r)\bigr)}
\bigl\{(r-1)(f')^{-1}(t^r)f''\bigl((f')^{-1}(t^r)\bigr)+rt^r\bigr\}.
\end{align*}
Letting $x:=(f')^{-1}(t^r)$ so that $t^r=f'(x)$, we have
\begin{equation}\label{F-A.4}
{d^2\over dt^2}\,\hat f(t^r)
={rt^{r-2}\over f''(x)}\bigl\{(r-1)xf''(x)+rf'(x)\bigr\},
\end{equation}
which is $\le0$ thanks to \eqref{F-A.3} and $f''(x)>0$. Hence $\hat f(x^r)$ is concave on
$(0,\infty)$.

To prove the converse, assume that $f$ and hence $\hat f$ are in
$\cF_\convex^\nearrow(0,\infty)$. By interchanging $f$ with $\hat f$ and $r$ with $1-r$ it
suffices to prove that if $f(x^{1-r})$ is concave on $(0,\infty)$, then $\hat f(x^r)$ is convex
on $(0,\infty)$. By Lemma \ref{L-A.3} we can assume as in the first part of the proof that $f$
is $C^2$ on $(0,\infty)$. By approximating $f$ by $f(x)+\eps x^{1/(1-r)}$ as $\eps\searrow0$,
we can furthermore assume that $f''(x)>0$ for all $x>0$. From \eqref{F-A.2} we have
\begin{equation}\label{F-A.5}
(1-r)xf''(x)-rf'(x)\le0,\qquad x>0.
\end{equation}
Let
$$
\alpha:=f'(0+)=\lim_{x\to0+}f'(x)\in[0,+\infty).
$$
It is clear that $\hat f(t)=-f(0+)$ for all $t\in(0,\alpha]$ (if $\alpha>0$). So it remains to
prove that
\begin{equation}\label{F-A.6}
{d^2\over dt^2}\,\hat f(t^r)\ge0,\qquad t>\alpha^{1/r}.
\end{equation}
When $t>\alpha^{1/r}$, i.e., $t^r>\alpha$, we can define $x:=(f')^{-1}(t^r)$ and compute
\eqref{F-A.4} in the same way as above. Hence \eqref{F-A.6} follows from \eqref{F-A.5}.
\end{proof}

Concerning the assertion (c) above we need in Section 5 its ``only if\," part only while we
give it as ``if and only if\," for completeness.

\begin{lemma}\label{L-A.2}
\begin{itemize}
\item[\rm(a)] For each $f\in\cF_\concave^\nearrow(0,\infty)$ define
$$
\check f(t):=\inf_{x>0}\{xt-f(x)\},\qquad t\in(0,\infty).
$$
Then $\check f\in\cF_\concave^\nearrow(0,\infty)$ and $f\mapsto\check f$ is an
involutive bijection on $\cF_\concave^\nearrow(0,\infty)$, i.e., $\check{\check f}=f$
for all $f\in\cF_\concave^\nearrow(0,\infty)$.
\item[\rm(b)] For every $f\in\cF_\concave^\nearrow(0,\infty)$ and $B\in\bP_n$,
$$
\Tr f(B)=\inf_{A\in\bP_n}\bigl\{\Tr AB-\Tr\check f(A)\bigr\}.
$$
\item[\rm(c)] Let $f$ be a non-decreasing function on $(0,\infty)$ and $r>0$. If $f(x^{1+r})$
is concave on $(0,\infty)$, then $f\in\cF_\concave^\nearrow(0,\infty)$ and $\check f(x^{-r})$
is convex on $(0,\infty)$.
\item[\rm(d)] Let $f\in\cF_\concave^\nearrow(0,\infty)$ and $r>0$. If $f(x^{1+r})$ is convex
on $(0,\infty)$, then $\check f(x^{-r})$ is concave on $(0,\infty)$.
\end{itemize}
\end{lemma}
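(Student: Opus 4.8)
The plan is to prove Lemma~\ref{L-A.2} as the concave mirror image of Lemma~\ref{L-A.1}, replacing $\sup$ by $\inf$, ``convex'' by ``concave'', and $\hat f$ by $\check f$ throughout, while tracking the sign reversals this forces. For (a), I would first note that for fixed $t>0$ the map $x\mapsto xt-f(x)$ is convex and, since $\lim_{x\to\infty}f(x)/x=0$, satisfies $xt-f(x)=x(t-f(x)/x)\to+\infty$; together with $xt-f(x)\to-f(0+)$ as $x\searrow0$ this shows $\check f(t)$ is finite and, being an infimum of the affine functions $t\mapsto xt-f(x)$ (each increasing in $t$), that $\check f$ is concave and non-decreasing. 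The sublinear growth $\lim_{t\to\infty}\check f(t)/t=0$ follows by locating the minimizer $x_0=(f')^{-1}(t)$ and using the concavity estimate $x_0f'(x_0)\le f(x_0)-f(0+)\to0$. For the involution I would invoke Fenchel--Moreau duality for concave functions: extend $f$ to $\bar f$ on $\bR$ by $\bar f\equiv-\infty$ on $(-\infty,0)$ and $\bar f(0):=f(0+)$ --- the concave analogue of the constant extension used in Lemma~\ref{L-A.1}, with the roles of $+\infty$ and $-\infty$ interchanged. Then $\bar f$ is proper, concave and upper semicontinuous, its concave conjugate restricts to $\check f$ on $(0,\infty)$, and biconjugation gives $\check{\check f}=f$.

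For (b), as in Lemma~\ref{L-A.1}(b) I would reduce to diagonal $B=\diag(b_1,\dots,b_n)$ with $b_1\ge\dots\ge b_n$. The pointwise identity $f(b)=\inf_{a>0}\{ab-\check f(a)\}$ from (a), applied coordinatewise with a diagonal $A$, gives $\inf_{A\in\bP_n}\{\Tr AB-\Tr\check f(A)\}\le\Tr f(B)$. For the reverse inequality I would use the majorization bound $\Tr AB\ge\sum_{i=1}^na_i b_{n+1-i}$ (the minimum over the unitary orbit, i.e.\ the oppositely ordered pairing, in contrast with the aligned upper bound $\Tr AB\le\sum a_ib_i$ used in Lemma~\ref{L-A.1}(b)); then $\Tr AB-\Tr\check f(A)\ge\sum_i\{a_ib_{n+1-i}-\check f(a_i)\}\ge\sum_i f(b_{n+1-i})=\Tr f(B)$ for every $A\in\bP_n$.

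Parts (c) and (d) reduce to a single differential computation. In (c) I would first verify $f\in\cF_\concave^\nearrow(0,\infty)$: writing $f(y)=g(y^{1/(1+r)})$ with $g(x):=f(x^{1+r})$ concave non-decreasing and $y^{1/(1+r)}$ concave increasing shows $f$ is concave, while $f(x)/x=(g(z)/z)\,z^{-r}\to0$ (with $z=x^{1/(1+r)}$) gives the sublinear growth. Then, exactly as in Lemma~\ref{L-A.1}(c), I would regularize via $f_\eps(x):=\int_{-1}^1\phi(t)f(xe^{-\eps t})\,dt$ (using the analogue of Lemma~\ref{L-A.3} to get $\check f_\eps\to\check f$) to assume $f\in C^2$, and approximate $f$ by $f(x)+\eps x^{1/(1+r)}$ --- whose composition $f(x^{1+r})+\eps x$ differs from $f(x^{1+r})$ only by a linear term and hence remains concave (resp.\ convex), while $f(x)+\eps x^{1/(1+r)}$ is strictly concave --- to assume $f''<0$. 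With the minimizer $x=(f')^{-1}(t)$ one gets $\check f(t)=t(f')^{-1}(t)-f((f')^{-1}(t))$ and $\check f'(t)=(f')^{-1}(t)$, whence, writing $w:=(f')^{-1}(x^{-r})$,
$$
\frac{d^2}{dx^2}\,\check f(x^{-r})=\frac{r\,x^{-r-2}}{f''(w)}\bigl\{(1+r)w\,f''(w)+r\,f'(w)\bigr\}.
$$
Since $\frac{d^2}{dx^2}f(x^{1+r})=(1+r)x^{r-1}\{(1+r)x^{1+r}f''(x^{1+r})+rf'(x^{1+r})\}$, concavity (resp.\ convexity) of $f(x^{1+r})$ is equivalent to $(1+r)yf''(y)+rf'(y)\le0$ (resp.\ $\ge0$) for all $y>0$; combined with $f''(w)<0$ this forces $\frac{d^2}{dx^2}\check f(x^{-r})\ge0$ in (c) and $\le0$ in (d) on the range $x^{-r}<f'(0+)$, while on $x^{-r}\ge f'(0+)$ one has $\check f(x^{-r})=-f(0+)$ constant, the two pieces matching in a $C^1$ fashion so that global convexity (resp.\ concavity) follows.

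I expect the delicate points to be twofold. First, getting the involution in (a) right: the $-\infty$ extension and the concave Fenchel--Moreau theorem must be handled with care, together with the verification that $\check f$ never leaves $\cF_\concave^\nearrow(0,\infty)$ (in particular that affine functions of positive slope never arise as conjugates). Second, the sign bookkeeping in (c)/(d): because $f''<0$, division by $f''$ reverses the inequality inherited from the hypothesis on $f(x^{1+r})$, and I must confirm that the strictly-concavifying perturbation $+\eps x^{1/(1+r)}$ simultaneously preserves the convexity/concavity of the composition and the convergence $\check f_\eps\to\check f$, and that the $C^1$ matching across the constant region $x^{-r}\ge f'(0+)$ yields global (not merely piecewise) convexity/concavity of $\check f(x^{-r})$.
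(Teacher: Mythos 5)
Your proposal follows the paper's proof essentially step for step: the same Legendre-type duality with the $-\infty$ extension of $f$ to $(-\infty,0)$ for the involution in (a), the same reduction to diagonal $B$ plus the oppositely-ordered majorization $\Tr AB\ge\sum_{i=1}^n a_ib_{n+1-i}$ for (b), and the same regularization, perturbation by $\eps x^{1/(1+r)}$, and second-derivative formula $\frac{d^2}{dt^2}\check f(t^{-r})=\frac{rt^{-r-2}}{f''(x)}\{(1+r)xf''(x)+rf'(x)\}$ with the identical sign analysis for (c)/(d). The one fragile point is your argument for $\lim_{t\to\infty}\check f(t)/t=0$ via the estimate $x_0f'(x_0)\le f(x_0)-f(0+)$, which presupposes a differentiable minimizer and becomes vacuous when $f(0+)=-\infty$ (e.g.\ $f=\log$, which the class admits); the paper instead uses the elementary bound $\check f(t)/t\le x-f(x)/t\to x$ for each fixed $x>0$, which settles this in one line in full generality.
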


\begin{proof}
(a)\enspace
Let $f\in\cF_\concave^\nearrow(0,\infty)$ and $t\in(0,\infty)$. Since $xt-f(x)$ is convex in
$x\in(0,\infty)$ and $xt-f(x)=x(t-f(x)/x)\to+\infty$ as $x\to\infty$, it follows that
$\check f(t)$ is defined as a finite value. By definition, $\check f$ is concave and
non-decreasing. For any $x>0$ fixed, since $\check f(t)/t\le x-f(x)/t\to x$ as
$t\to\infty$, we have $\lim_{t\to\infty}\check f(t)/t=0$, so
$\check f\in\cF_\concave^\nearrow(0,\infty)$. To show that $f\mapsto\check f$ is an
involutive bijection, we extend $f$ to $\bar f$ on the whole $\bR$ by
$\bar f(0):=\lim_{x\to0+}f(x)$ (possibly $-\infty$) and $\bar f(x)=-\infty$ for $x<0$.
Then $-\bar f$ is a lower semicontinuous convex function on $\bR$, and the conjugate function
$(-\bar f)^*(t):=\sup_{x\in\bR}\{xt+\bar f(x)\}$ is given as
$$
(-\bar f)^*(t)=\begin{cases}-\check f(-t) & \text{if $t<0$}, \\
f(\infty)\ (:=\lim_{x\to\infty}f(x)) & \text{if $t=0$}, \\
+\infty & \text{if $t>0$}.
\end{cases}
$$
Due to the duality of conjugate functions, we have for $x>0$,
$$
-f(x)=-\bar f(x)=\sup_{t\in\bR}\{xt-(-\bar f)^*(t)\}=\sup_{t<0}\{xt+\check f(-t)\}
$$
by taking account of $(-\bar f)^*(0)=f(\infty)=-\lim_{t\to0+}\check f(t)$. Therefore,
$$
f(x)=\inf_{t<0}\{x(-t)-\check f(-t)\}=\inf_{t>0}\{xt-\check f(t)\}=\check{\check f}(x).
$$

(b)\enspace
The proof is similar to that of Lemma \ref{L-A.1}\,(b). We may use the majorization
$\Tr AB\ge\sum_{i=1}^na_ib_{n+1-i}$ for $A,B\in\bP_n$ with the respective eigenvalues
$a_1\ge\dots\ge a_n$ and $b_1\ge\dots\ge b_n$.

(c)\enspace
Let $f$ be a non-decreasing function on $(0,\infty)$ and $r>0$. Assume that
$\tilde f(x):=f(x^{1+r})$ is concave on $(0,\infty)$. Then it immediately follows that $f$ is
concave on $(0,\infty)$. Since $f(x^{1+r})/x^{1+r}=(\tilde f(x)/x)/x^r\to0$ as $x\to\infty$,
we have $f\in\cF_\concave^\nearrow(0,\infty)$. To show convexity of $\check f(x^{-r})$, the
regularization \eqref{F-A.1} and Lemma \ref{L-A.3} below can be employed so that we may assume
that $f$ is $C^2$ on $(0,\infty)$. By approximating $f$ by $f(x)+\eps x^{1/(1+r)}$ as
$\eps\searrow0$, we may assume that $\lim_{x\to0+}f'(x)=+\infty$ and $f''(x)<0$ for all $x>0$
and so $f'(x)$ is strictly decreasing on $(0,\infty)$. Since
\begin{equation}\label{F-A.7}
{d^2\over dx^2}\,f(x^{1+r})=(1+r)x^{r-1}\bigl\{(1+r)x^{1+r}f''(x^{1+r})+rf'(x^{1+r})\bigr\},
\end{equation}
we have
\begin{equation}\label{F-A.8}
(1+r)xf''(x)+rf'(x)\le0,\qquad x>0.
\end{equation}
For every $t>0$, since $\lim_{x\to0+}f'(x)=+\infty$ and
$\lim_{x\to\infty}f'(x)=\lim_{x\to\infty}f(x)/x=0$, there is a unique $x_0>0$ such that
$f'(x_0)=t$ and thus $xt-f(x)$ on $x>0$ takes the minimum at $x=x_0=(f')^{-1}(t)$. We hence
have $\check f(t)=t(f')^{-1}(t)-f\bigl((f')^{-1}(t)\bigr)$ and, as in the proof of Lemma
\ref{L-A.1}\,(c),
$$
{d^2\over dt^2}\,\check f(t^{-r})
={rt^{-r-2}\over f''\bigl((f')^{-1}(t^{-r})\bigr)}
\bigl\{(1+r)(f')^{-1}(t^{-r})f''\bigl((f')^{-1}(t^{-r})\bigr)+rt^{-r}\bigr\}.
$$
Letting $x:=(f')^{-1}(t^{-r})$ so that $t^{-r}=f'(x)$ we have
\begin{equation}\label{F-A.9}
{d^2\over dt^2}\,\check f(t^{-r})
={rt^{-r-2}\over f''(x)}\bigl\{(1+r)xf''(x)+rf'(x)\bigr\},
\end{equation}
which is $\ge0$ thanks to \eqref{F-A.8}. Hence
$\check f(x^{-r})$ is
convex on $(0,\infty)$.

(d)\enspace
Assume that $f\in\cF_\concave^\nearrow(0,\infty)$ and $f(x^{1+r})$ is convex on $(0,\infty)$.
We may assume that $f$ is $C^2$ as before. Approximating $f$ by $f(x)+\eps x^{1/(1+r)}$ as
$\eps\searrow0$ we may further assume that $f''(x)<0$ for all $x>0$. From \eqref{F-A.7} we have
$(1+r)xf''(x)+rf'(x)\ge0$ for all $x>0$. Let $\alpha:=\lim_{x\to0+}f'(x)\in(0,+\infty]$. If
$\alpha<+\infty$, then $f(0+):=\lim_{x\to0+}f(x)$ exists in $\bR$ and $\check f(t)=-f(0+)$ for
all $t\ge\alpha$. So it suffices to prove that
$$
{d^2\over dt^2}\,\check f(t^{-r})\le0,\qquad
t^{-r}<\alpha,\ \ \mbox{i.e.,}\ \ t>\alpha^{-1/r},
$$
which indeed holds since we have \eqref{F-A.9} with $x:=(f')^{-1}(t^{-r})$ when $t^{-r}<\alpha$.
\end{proof}

\begin{lemma}\label{L-A.3}
Let $f\in\cF_\convex^\nearrow(0,\infty)$ (resp., $f\in\cF_\concave^\nearrow(0,\infty)$) and
$f_\eps$ be defined by \eqref{F-A.1} for each $\eps>0$. Then
$f_\eps\in\cF_\convex^\nearrow(0,\infty)$ (resp., $f_\eps\in\cF_\concave^\nearrow(0,\infty)$)
and $\hat f_\eps\to\hat f$ as $\eps\searrow0$ uniformly on any bounded closed interval of
$(0,\infty)$.
\end{lemma}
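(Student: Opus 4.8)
The plan is to prove the convex case in full and to note that the concave case runs in parallel, with $\hat f$, $\sup$ and $\cF_\convex^\nearrow(0,\infty)$ replaced by $\check f$, $\inf$ and $\cF_\concave^\nearrow(0,\infty)$ throughout; I indicate the two places where the concave case needs a separate remark. So assume $f\in\cF_\convex^\nearrow(0,\infty)$. That $f_\eps$ is $C^\infty$ and that $f_\eps\to f$ as $\eps\searrow0$ uniformly on bounded closed subintervals of $(0,\infty)$ are the standard mollifier facts already recorded in the proof of Lemma \ref{L-A.1}\,(c) (via the additive regularization $h_\eps(\log x)$ of $h(s):=f(e^s)$). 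For monotonicity and convexity I would argue directly: for each fixed $t\in[-1,1]$ the map $x\mapsto f(xe^{-\eps t})$ is a dilation of $f$, hence non-decreasing and convex on $(0,\infty)$, and since $\phi\ge0$ with $\int_{-1}^1\phi=1$ the average \eqref{F-A.1} inherits both properties. For the growth condition I would write $f_\eps(x)/x=\int_{-1}^1\phi(t)e^{-\eps t}\,[f(xe^{-\eps t})/(xe^{-\eps t})]\,dt$; letting $x\to\infty$, the integrand tends to $+\infty$ on $\{\phi>0\}$ because $f(y)/y\to+\infty$, so Fatou's lemma gives $\lim_{x\to\infty}f_\eps(x)/x=+\infty$, i.e.\ $f_\eps\in\cF_\convex^\nearrow(0,\infty)$. (In the concave case $f(y)/y\to0$ and the same integral $\to0$ by dominated convergence, the values $y=xe^{-\eps t}$ all being large.)

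The heart of the matter is the uniform convergence $\hat f_\eps\to\hat f$ on a fixed $[a,b]\subset(0,\infty)$. The single elementary inequality I would use is
\[
|\hat f_\eps(t)-\hat f(t)|\le\sup_{x\in I}|f_\eps(x)-f(x)|,
\]
valid whenever the two suprema defining $\hat f_\eps(t)$ and $\hat f(t)$ are both attained over the same set $I$ (it follows from $\sup_I g_1\le\sup_I(g_2+|g_1-g_2|)$ and symmetry). Thus the whole problem reduces to localizing, uniformly in $t\in[a,b]$ and in small $\eps$, the supremum $\sup_{x>0}\{xt-g(x)\}$ to one compact $x$-interval $I=(0,M]$ for every $g\in\{f\}\cup\{f_\eps:0<\eps\le1\}$. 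The key device is the $\eps$-independent sandwich
\[
f(xe^{-1})\le f_\eps(x)\le f(xe)\qquad(0<\eps\le1),
\]
coming from $e^{-\eps t}\in[e^{-1},e]$ for $t\in[-1,1]$ together with monotonicity of $f$.

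Using the lower bound $g(x)\ge f(xe^{-1})$, whose superlinear growth is uniform in $\eps$, I would choose $M$ (depending only on $b$ and $f$) so that $bx-f(xe^{-1})<-f(0+)$ for all $x\ge M$; substituting $y=xe^{-1}$ this is just $f(y)-bey>f(0+)$ for large $y$, which holds since $f(y)/y\to+\infty$. On the other hand every relevant supremum is $\ge\lim_{x\to0+}(xt-g(x))=-g(0+)=-f(0+)$, because $f_\eps(0+)=f(0+)$. Hence for $x\ge M$ one has $xt-g(x)\le bx-g(x)<-f(0+)\le\sup$, so each supremum is attained on $(0,M]$. Since $f$ (and the continuous extensions of all $f_\eps$) are continuous and bounded on $[0,M]$, the displayed inequality with $I=[0,M]$ gives $\sup_{t\in[a,b]}|\hat f_\eps(t)-\hat f(t)|\le\sup_{x\in[0,M]}|f_\eps(x)-f(x)|\to0$ by the mollifier estimate.

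I expect the main obstacle to be exactly this uniform localization: one must produce a cutoff $M$ serving $f$ and all $f_\eps$ with small $\eps$ simultaneously, which is why the $\eps$-independent sandwich is indispensable. In the concave case the conjugate is an infimum and the scheme is identical, the same sandwich now forcing a uniform upper cutoff $M_+$ out of $f(y)/y\to0$; the only extra point is that when $f(0+)=-\infty$ (e.g.\ $f=\log$) the minimizers stay bounded away from $0$, so $-f(xe)\to+\infty$ as $x\to0+$ yields a uniform lower cutoff $x_->0$ and the relevant compact interval is $[x_-,M_+]\subset(0,\infty)$, while if $f(0+)$ is finite one simply includes the endpoint $x=0$ as above.
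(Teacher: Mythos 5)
Your proposal is correct, but it reaches the uniform convergence $\hat f_\eps\to\hat f$ by a genuinely different route than the paper. The paper first observes that it suffices to prove \emph{pointwise} convergence $\hat f_\eps(t)\to\hat f(t)$, because a pointwise convergent family of convex functions is automatically equicontinuous on compact subintervals of $(0,\infty)$; the pointwise limit is then obtained in two halves, an easy $\liminf$ bound via a maximizer $\xi$ for $\hat f(t)$, and a contradiction argument for the other direction that extracts a sequence of maximizers $x_n$ for $\hat f_{\eps_n}(t)$ and rules out $x_n\to0$ and $x_n\to\infty$ separately. You instead prove the uniform estimate directly via the Lipschitz-type inequality $|\hat f_\eps(t)-\hat f(t)|\le\sup_{x\in I}|f_\eps(x)-f(x)|$ on a common compact localization interval $I$, obtained from the $\eps$-independent sandwich $f(xe^{-1})\le f_\eps(x)\le f(xe)$. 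Your argument is more quantitative (it yields an explicit modulus of convergence) and dispenses with the soft equicontinuity fact; the paper's argument avoids the localization bookkeeping. One small point to tighten: your final step invokes ``the mollifier estimate'' to conclude $\sup_{[0,M]}|f_\eps-f|\to0$, but that estimate is only stated on closed intervals $[c,M]$ with $c>0$; to cover a neighbourhood of $0$ you should add that $|f_\eps(x)-f(x)|\le f(xe)-f(xe^{-1})\le f(ce)-f(0+)$ for $0<x\le c$, uniformly in $\eps\le1$, which is small for small $c$ by continuity of $f$ at $0+$ --- this follows from the very sandwich you already introduced, so the gap is cosmetic. The same remark applies at the lower cutoff $x_-$ in the concave case.
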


\begin{proof}
Assume that $f\in\cF_\convex^\nearrow(0,\infty)$. By definition \eqref{F-A.1} it is obvious
that $f_\eps$ is non-decreasing and convex on $(0,\infty)$. It is also obvious that
$\lim_{x\to\infty}f_\eps(x)/x=+\infty$ follows from the same property of $f$. Hence
$f_\eps\in\cF_\convex^\nearrow(0,\infty)$ for any $\eps>0$. To prove the latter assertion, it
suffices to show that $f_\eps(t)\to f(t)$ for every $t>0$, for it is plain to see that a
pointwise convergent sequence of convex functions on $(0,\infty)$ is equicontinuous on any
bounded closed interval of $(0,\infty)$. Let $t>0$ be arbitrary. Choose a $\xi\ge0$ such that
$\hat f(t)=\xi t-f(\xi)$ (where $f(0)=f(0+)$). For every $\delta>0$ we have
$|f_\eps(\xi)-f(\xi)|<\delta$ and so $\hat f_\eps(t)\ge\xi t-f_\eps(\xi)\ge\hat f(t)-\delta$
for any sufficiently small $\eps>0$. Hence $\liminf_{\eps\searrow0}\hat f_\eps(t)\ge\hat f(t)$.
Now, suppose by contradiction that $\hat f_\eps(t)\not\to\hat f(t)$ as $\eps\searrow0$; then
there are a $\delta_0>0$ and a sequence $0<\eps_n\searrow0$ such that
$\hat f_{\eps_n}(t)\ge\hat f(t)+\delta_0$ for all $n$. Choose a sequence $x_n\ge0$ such that
$\hat f_{\eps_n}(t)=x_nt-f_{\eps_n}(x_n)$. By taking a subsequence we may assume that
$x_n\to x_0\in[0,\infty]$. If $x_0=0$, then
$$
\hat f(t)+\delta_0\le\hat f_{\eps_n}(t)=x_nt-f_{\eps_n}(x_n)
\longrightarrow-f(0)\le\hat f(t)\quad\mbox{as $n\to\infty$},
$$
a contradiction. If $x_0=\infty$, then we have a contradiction by taking the limit as
$n\to\infty$ of $\hat f_{\eps_n}(t)/x_n=t-f_{\eps_n}(x_n)/x_n$. Indeed, since
$\hat f_{\eps_n}(t)$ is lower bounded, the left-hand side tends to $0$ while
$f_{\eps_n}(x_n)/x_n\to+\infty$ so that the right-hand tends to $-\infty$. Therefore,
$x_0\in(0,\infty)$, so we can assume that $x_n$'s are in a bounded interval $[a,b]$ of
$(0,\infty)$. Since $f_{\eps_n}\to f$ uniformly on $[a,b]$, we have a contradiction again since
$$
\hat f(t)+\delta_0\le x_nt-f_{\eps_n}(x_n)\longrightarrow x_0t-f(x_0)\le\hat f(t).
$$
It thus follows that $\hat f_\eps(t)\to\hat f(t)$ as $\eps\searrow0$.

Next, assume that $f\in\cF_\concave^\nearrow(0,\infty)$. We have
$f_\eps\in\cF_\concave^\nearrow(0,\infty)$ for any $\eps>0$ similarly to the above case. For
every $t>0$ choose a $\xi\ge0$ such that $\check f(t):=\xi t-f(\xi)$ (where $f(0)=f(0+)$ if
$f(0+)>-\infty$). For every $\delta>0$ we have
$\check f_\eps(t)\le\xi t-f_\eps(\xi)\le\check f(t)+\delta$ for any sufficiently small $\eps>0$.
Hence $\limsup_{\eps\searrow0}\check f_\eps(t)\le\check f(t)$. Suppose that
$\check f_\eps(t)\not\to\check f(t)$ as $\eps\searrow0$; then
$\check f_{\eps_n}(t)\le\check f(t)-\delta_0$ for some $\delta_0>0$ and some sequence
$\eps_n\searrow0$. Then we have a contradiction as in the proof of the above case, whose details
are omitted here.
\end{proof}

\begin{example}\label{E-A.4}\rm
(1)\enspace
Let $0<r<1$. Besides $f(x)=x^s$ with $s\ge1/(1-r)$ the following are examples of non-decreasing
convex functions $f$ such that $f(x^{1-r})$ is convex on $(0,\infty)$:
\begin{itemize}
\item For any $s\ge1/(1-r)$ and $\alpha>0$, $f(x)=(x-\alpha)_+^s$ or $f(x)=(x^s-\alpha^s)_+$.
\item For $s_1,s_2\ge1/(1-r)$ and $\alpha>0$,
$$
f(x)=\begin{cases}x^{s_1} & \text{if $0<x\le\alpha$}, \\
\beta(x^{s_2}-\alpha^{s_2})+\alpha^{s_1} & \text{if $x\ge\alpha$},
\end{cases}
$$
where $\beta\ge(s_1/s_2)\alpha^{s_1-s_2}$.
\end{itemize}

(2)\enspace
Let $r>0$. Besides $f(x)=x^s$ with $0<s\le1/(1+r)$ and $f(x)=\log x$ the following are examples
of non-decreasing concave functions $f$ such that $f(x^{1+r})$ is concave on $(0,\infty)$:
\begin{itemize}
\item For any $0<s\le1/(1+r)$ and $\alpha>0$,
$$
f(x)=\begin{cases}x^s-\alpha x &
\text{if $0<x\le(s/\alpha)^{1/(1-s)}$}, \\
(1-s)(s/\alpha)^{s/(1-s)} & \text{if $x\ge(s/\alpha)^{1/(1-s)}$}.
\end{cases}
$$
\item For $0<s_1,s_2\le1/(1+r)$ and $\alpha>0$,
$$
f(x)=\begin{cases}x^{s_1} & \text{if $0<x\le\alpha$}, \\
\beta(x^{s_2}-\alpha^{s_2})+\alpha^{s_1} & \text{if $x\ge\alpha$},
\end{cases}
$$
where $0<\beta\le(s_1/s_2)\alpha^{s_1-s_2}$.
\end{itemize}
\end{example}

\end{document}